\newcommand{\Anne}{\color{red}{\bf Anne:}}
\newcommand{\todoii}[1]{{\textbf{[#1]}}} % todo (in line)
\newcommand{\todomargii}[1]{\marginpar{#1}} % todo (margin)
\newcommand{\todoBii}[1]{\vspace{5mm}\par \noindent % todo (boxed par)
\framebox{\begin{minipage}[c]{0.95 \textwidth} \tt #1\end{minipage}} \vspace{5mm} \par}
\newcommand{\compl}[1]{\footnote{\color{magenta}#1}} % complements
\renewcommand{\todoii}[1]{} 
\renewcommand{\todomargii}[1]{}
\renewcommand{\todoBii}[1]{}
\renewcommand{\compl}[1]{} 
\theoremstyle{plain}
\newtheorem{Theorem}{Theorem}[section]
\newtheorem{Theorem?}{Theorem(?)}[section]
\newtheorem{Proposition}[Theorem]{Proposition}
\newtheorem{Corollary}[Theorem]{Corollary}
\newtheorem{Lemma}[Theorem]{Lemma}
\theoremstyle{definition}
\newtheorem{Example}[Theorem]{Example}
\newtheorem{Definition}[Theorem]{Definition}
\newtheorem{Remark}[Theorem]{Remark}
\DeclareMathOperator{\id}{id}
\DeclareMathOperator{\CAT}{CAT}
\newcommand{\ov}[1]{\overline{#1}}
\newcommand{\isomto}{\overset{\sim}{\rightarrow}}
\renewcommand{\vec}[1]{\overrightarrow{#1}}
\newcommand{\calL}{\mathcal L}
\newcommand{\calG}{\mathcal G} % Space of geodesics (for currents)
\newcommand{\calDG}{\mathcal {DG}} % Space of transverse pairs of geodesics (for currents)
\DeclareMathOperator{\supp}{supp}
\newcommand{\n}{n} % integer
\newcommand{\R}{{\mathbb R}} %renamed
\newcommand{\N}{{\mathbb N}} %renamed
\newcommand{\bN}{{\mathbb N}} %renamed
\newcommand{\bZ}{{\mathbb Z}} %renamed
\newcommand{\abs}[1]{|#1|}
\newcommand{\norm}[1]{||#1||}
\newcommand{\wt}[1]{\widetilde{#1}}
\newcommand{\PP}{\mathbb{P}} 
\DeclareMathOperator{\PSL}{PSL}
\newcommand{\Ga}{\Gamma}
\newcommand{\Sigmat}{\widetilde{\Sigma}} 
\newcommand{\Sigmatc}{\widetilde{\Sigma}^c} 
\newcommand{\subS}{\Sigma'}
\newcommand{\met}{m}
\newcommand{\W}{W} 
\newcommand{\Wt}{{\widetilde{\W}}} 
\newcommand{\ct}{{\widetilde{c}}} 
\newcommand{\Wtc}{{\widetilde{W}^c}}  % completed for flat metric
\newcommand{\Pt}{{\wt{P}}} 
\newcommand{\Ppt}{{\wt{P'}}} 
\newcommand{\calP}{\mathcal P} % set of pieces
\DeclareMathOperator{\Hom}{Hom}
\DeclareMathOperator{\Out}{Out}
\newcommand{\Teich}{\mathcal{T}}
\newcommand{\Char}{\Xi} % the character variety Hom_red/G % \Xi in Bipp_ann
\newcommand{\CharMax}{\Char_{\text{Max}}}
\newcommand{\CharHit}{\Char_{\text{Hit}}}
\newcommand{\CharDot}{\mathcal{X}} % hit or max
\DeclareMathOperator{\Flat}{\vec{\rm Flat}} % Space of directed flat structures.
\DeclareMathOperator{\FlatDLR}{{Flat}} % Space of undirected flat structures.
\DeclareMathOperator{\Mix}{\vec{{\rm Mix}}} % Space of directed Mixed struct.
\DeclareMathOperator{\MixDLR}{{Mix}} % Space of undirected Mixed struct
\newcommand{\compC}[1]{\overline{#1}^{\rm WL}}% was {\overline{#1}^{\vd}}
\newcommand{\compN}[1]{\overline{#1}^{N}} %was {\overline{#1}^{d_N}}
\newcommand{\bord}{\partial}
\newcommand{\bordC}{\partial^{\rm WL}}% was {\vec{\partial}}
\newcommand{\dCharWL}{\partial^{\rm WL}\Char}% Boundary in there, from BIPP_ann 
\newcommand{\dCharMaxWL}{\partial^{\rm WL}\CharMax}%Boundary of Wchamb cpt of max reps
\newcommand{\CharHitWL}{\CharHit^{\rm WL}}
\newcommand{\CharDotWL}{\CharDot^{\rm WL}}
\newcommand{\dCharDotWL}{\partial^{\rm WL}\CharDot}
\newcommand{\MCG}{{\rm MCG}}
\newcommand{\vd}{\vec{d}} % Weyl chamber distance
\renewcommand{\L}{L} % translation length for d^1 
\newcommand{\vL}{\vec{L}} % Weyl chamber translation length
\newcommand{\CalvL}{\vec{\calL}} % Weyl chamber length spec map
\newcommand{\PCalvL}{\PP\vec{\calL}} % Weyl chamber length spec map
\DeclareMathOperator{\Syst}{Syst}
\newcommand{\vnu}{\vec{\nu}} % transverse vec measure
\newcommand{\ML}{\mathcal{ML}} % space of measured laminations
\newcommand{\Fol}{\mathcal{F}} % a measured foliation
\newcommand{\Folt}{\wt{\mathcal{F}}} 
\newcommand{\Lam}{\Lambda} % a lamination 
\newcommand{\mLam}{\lambda} % a measured lamination 
\newcommand{\vmLam}{\vec{\mLam}} % a vec measured lamination 
\newcommand{\Lamt}{\wt{\Lam}} % lift  
\newcommand{\Lamot}{\wt{\Lam_0}} %
\newcommand{\mLamt}{\wt{\mLam}} % a measured lamination 
\newcommand{\Sigmab}{\Sigma^b} 
\newcommand{\Sigmabt}{\wt{\Sigmab}} 
\newcommand{\Lamb}{\Lam^b} 
\newcommand{\Lamtb}{\Lamt^b}  
\newcommand{\nub}{\nu^b} % transverse measure
\newcommand{\vnub}{\vnu^b} % transverse vec measure
\newcommand{\mLamb}{\mLam^b} % a measured lamination 
\newcommand{\vmLamb}{\vmLam^b} % a vec measured lamination 
\newcommand{\metb}{\met^b} % blow-up of the metric \met
\newcommand{\subSb}{\Sigma^{'b}} % image of $\subS$
\newcommand{\Eeb}{\Ee^b} % special geods
\newcommand{\Eebt}{\wt{\Ee}^b} % lift
\newcommand{\K}{K} % a flat structure
\newcommand{\M}{M} % a mixed structure
\newcommand{\Pieces}{\mathcal{P}}
\newcommand{\piXM}{\pi} % ?
\newcommand{\Aa}{\mathfrak{a}}%{\mathbb{A}} 
\newcommand{\Cc}{{\overline{\mathfrak a}^+}}%{\mathfrak{C}} 
\newcommand{\opp}{\text{opp}} 
\newcommand{\Ee}{\mathcal E} % special geods
\newcommand{\Eet}{\wt{\Ee}} 
\newcommand{\Vv}{\mathcal V} % pieces of the dec
\begin{document}
	\title[$\PSL(2,\R)\times\PSL(2,\R)$ maximal representations]{Weyl 		chamber length compactification of  the $\PSL(2,\R)\times\PSL(2,\R)$ maximal character variety}
	%\title[$\PSL(2,\R)\times\PSL(2,\R)$ maximal representations]{Degenerations of maximal representations in	$\PSL(2,\R)\times\PSL(2,\R)$ } 

\author[]{M. Burger}
\address{Department Mathematik, ETH Zentrum, 
	R\"amistrasse 101, CH-8092 Z\"urich, Switzerland}
\email{burger@math.ethz.ch}

\author[]{A. Iozzi}
\address{Department Mathematik, ETH Zentrum, 
	R\"amistrasse 101, CH-8092 Z\"urich, Switzerland}
\email{iozzi@math.ethz.ch}

\author[]{A. Parreau}
\address{Institut Fourier, CS 40700, 38058 Grenoble cedex 09, France}
\email{Anne.Parreau@univ-grenoble-alpes.fr}

\author[]{M. B. Pozzetti}
\address{Mathematical Institute, Heidelberg University, Im Neuenheimerfeld 205, 69120 Heidelberg, Germany }
\email{pozzetti@mathi.uni-heidelberg.de}
\thanks {Beatrice Pozzetti is supported by the Deutsche Forschungsgemeinschaft under Germany’s Excellence Strategy EXC-2181/1 - 390900948 (the Heidelberg STRUCTURES Cluster of Excellence), and acknowledges further support by DFG grant 338644254 (within the framework of SPP2026).
}

\setcounter{tocdepth}{2}
\newpage
\maketitle

\begin{abstract}
  We study  the vectorial length compactification  of the space of conjugacy classes of maximal representations  of the fundamental group $\Gamma$  of a closed hyperbolic surface $\Sigma$  in  $\PSL(2,\R)^n$. 
  We identify the boundary with the sphere $\PP((\ML)^n)$,
  where $\ML$ is the space of measured geodesic laminations on $\Sigma$. 
  In the case $n=2$, we give a geometric interpretation of the boundary as
  the space of homothety classes of { $\R^2$-mixed structures} on $\Sigma$.
  % part flat (semi-translation) structure, and part geodesic lamination endowed
  % with a pair of transverse measures.
  We associate to such a structure a dual tree-graded space %with  flat
  % surface pieces, 
  endowed with an $\R_+^2$-valued metric,
  which we show to be universal with respect to actions on products of two $\R$-trees with the given length spectrum. 
  % We show that this space embeds in the product $T_1\times T_2$ of two $\R$-trees,
  % isometrically for the $\R_+^2$-valued metric, 
  % in particular isometrically for the $\R_+$-valued natural $\ell^1$-metric.
\end{abstract}

\tableofcontents

\section{Introduction}
Let $\Sigma$ be a connected closed oriented  surface of genus $g\geq 2$
and let $\Gamma=\pi_1(\Sigma)$ be its fundamental group.
%
%%% Context and motivations
For a real reductive Lie group $G$, we denote by
$\Char(\Gamma,G):=\Hom_{red}(\Gamma,G)/G$ the \emph{character variety},
namely the space of conjugacy classes of completely reducible
representations $\rho\colon \Gamma\to G$.
The third named author used Weyl chamber valued length functions to
construct the {\em Weyl chamber length compactification} of the character variety
\cite{APcomp}, a compactification that generalizes Thurston's
compactification of the Teichm\"uller space $\Teich=\Teich(\Sigma)$,
for which $G=\PSL(2,\R)$.
%%% Space of  lamination lengths functions
The boundary $\dCharWL(\Gamma, G)$ of the
character variety  %$\Hom_{red}(\Gamma,G)/G$ in
in the Weyl chamber length compactification is a compact subset of the set
$\PP(\Cc^\Gamma)$ of  homothety classes of non-zero functions
$\vL\colon \Gamma\to \Cc$, where $\Cc$ is a fixed closed Weyl chamber of
$G$.  Boundary points may be interpreted as projectivized $\Cc$-length
functions of actions on $\R$-buildings \cite{APcomp}.

In the case of $G=\PSL(2,\R)$, we know much more: Thurston proved that the boundary of 
this compactification of Teichm\"uller space is the
projectivization $\PP (\ML)$ of the space $\ML$ of measured geodesic laminations on $\Sigma$,
realized as the cone  $\ML\subset \R_+^\Gamma$ of functions
 $\Gamma\to\R_+$ that are intersection functions of measured geodesic
 laminations (or  equivalently, of measured foliations) on
 $\Sigma$ \cite{FLP, Bonahon88}. This allowed him to prove that this boundary is a sphere of dimension
 $6g-7$ and the compactification is a closed ball \cite{FLP}.
 Boundary points may also be interpreted as projectivized length functions of actions on
 real trees \cite{Bestvina, Paulin, Morgan-Shalen}. 

% For higher rank groups $G$, the character variety contains subsets of
% particular interest that are generalisations of the Teichm\"uller space.
%  the Hitchin character variety
%  $\CharHit(\Gamma, G)\subset \Char(\Gamma, G)$
%  for split real Lie groups $G$ such as $G=\PSL(n, \R)$, and the maximal character variety
%  $\CharMax(\Gamma,G)\subset \Char(\Gamma, G)$
%  when $G$ is of Hermitian type.
%
% %%% Hitchin representations
%When $G=\PSL(n,\R)$, the Hitchin character variety $\CharHit(\Gamma, \PSL(n, \R))$ is the connected component of  $\Char(\Gamma,G)$ containing the class of  $\pi_n\circ\rho$, where $\rho \colon  \Gamma \to \PSL(2, \R)$ is any holonomy representation of a hyperbolic structure and $\pi_n$ is the irreducible $n$-dimensional representation of $\PSL(2, \R)$.

%%% Maximal representations
If $G$ is of Hermitian type, that is if the associated symmetric space
admits a $G$-invariant complex structure, the character variety
contains a generalisation of the Teichm\"uller space, the space of
\emph{maximal representations}, which we denote by $\CharMax(\Gamma,G)$. 
% Burger-Iozzi-Wienhard introduced the class of maximal
% representations. Such representations are selected by means of a cohomological invariant, and form, in
% general, a union of connected components of the character variety,
% which we will denote by $\CharMax(\Gamma,G)$.
We refer the reader to
\cite{BILW, BIW} for the theory of maximal representations.
Maximal representations share many features with the subset of the character variety consisting of Hitchin representations, 
which are defined for $G$ real split, and whose character variety we denote by $\CharHit(\Gamma, G)$. 
%
% Denoting by $\CharDot(\Gamma, G)$ the Hitchin character variety
% $\CharHit(\Gamma, G)$ when  $G=\PSL(n,\R)$, or
% the maximal character variety $\CharDot(\Gamma, G)=\CharMax(\Gamma, G)$ 
% when $G$ is of Hermitian type,
% the Weyl length compactification $\CharWL(\Gamma,G)$ then induces
% a compactification $\dCharDotWL(\Gamma,G)$ of $\CharDot(\Gamma,G)$.
%
% %%% Questions on the WL-compactification
% Natural questions arise, as to determine the topology of 
% $\CharDotWL(\Gamma,G)$,
% and to interpret boundary points in terms of
% geometric structures on the surface. 
% %
% %%%% Finding nice invariant subsets
% Another interesting and related question is to  understand the
% corresponding actions on $\R$-buildings, for example finding
% nice invariant subsets.
% %
% %%% ParDegFG
% For example in the case where $G=\PGL(3,\R)$ and $\Gamma$ is punctured
% surface groups,  for large families of points of  the   Weyl chamber length boundary
% %for a $8\abs{\chi(\Sf)}$-dimensional real parameter family of points
% the third named author constructed in \cite{APdegFG}
% explicit associated finite $\Cc$-simplicial complexes
% whose universal cover, which are  tree-graded spaces with flat surface
% pieces, embbeds  equivariantly in the building preserving the natural $\Cc$-metric.
%
%%% Works on compactification of maxrep
In \cite{BIPP19, BIPP21} we used geodesic currents to study the Weyl
chamber length compactification $\CharDotWL(\Gamma, G)$ of $\CharDot(\Gamma, G)$, 
where $\CharDot(\Gamma, G)$ denotes either the Hitchin or the maximal character variety.
We showed that, as soon as the group $G$ has higher rank, the mapping
class group admits a non-empty open domain of discontinuity for its
action on the boundary $\dCharDotWL(\Gamma,G)$, the so-called
\emph{positive systole} subset; moreover for a dense subset of
boundary points, the associated length function can be computed as
intersection with a weighted multicurve.
%
%%% Questions
Natural questions arise: %, all of which we address here for some specific groups $G$: 
to determine the topology of 
$\CharDotWL(\Gamma,G)$; to interpret boundary points in terms of
geometric structures on the surface, in particular in the positive
systole subset; and to relate such geometric structures with the
associated actions on $\R$-buidings, for example by finding a nice
invariant subset following the third named author's work in the case of
$G=\PSL(3,\R)$
% $\CharHit(\Gamma, \PSL(3,\R))$
\cite{APdegFG}.
% %%% ParDegFG
% For example in the case where $G=\PGL(3,\R)$ and $\Gamma$ is punctured
% surface groups,  for large families of points of  the   Weyl chamber length boundary
% %for a $8\abs{\chi(\Sf)}$-dimensional real parameter family of points
% the third named author constructed in \cite{APdegFG}
% explicit associated finite $\Cc$-simplicial complexes
% whose universal cover, which are  tree-graded spaces with flat surface
% pieces, embbeds  equivariantly in the building preserving the natural $\Cc$-metric.

% A natural goal is to interpret points in the positive systole subset as geometric structures on the surface.

%%% Here
In this text,  we address all the questions introduced above in the case of $G=\PSL(2,\R)^n$ for
 $n\geq 2$. The product structure of $G$ lets us identify the space $\CharMax(\Gamma,G)$   with the product $\Teich^n$ of $n$ copies of the
Teichmüller space $\Teich=\CharMax(\Gamma,\PSL(2,\R))$. 
A model Weyl chamber for $G$ is $\Cc=\R_+^n$, hence boundary points
are homothety classes of
non-zero functions $\vL\colon \Gamma\to \R_+^n$.  Such functions 
can be identified with $n$-tuples  $(L_i)_{i=1,\ldots,n}$ of 
functions $L_i\colon \Gamma\to \R_+$.

We identify the boundary of $\Teich^n$  in its vectorial length compactification with the projectivisation of $(\ML)^n$;
%is $\PP((\ML)^n)\subset \PP((\R_+^n)^\Gamma)$,
in particular  we show that, being a join of $n$ spheres, it is  topologically a sphere of dimension $n(6g-6)-1$. 
This also allows us to give a precise description of the open domain of discontinuity for the mapping class group alluded to before; 
for this we denote by  $(\ML)^n_{>0}$ the set consisting of $n$-tuples of measured laminations with \emph{positive joint systole}, 
namely $n$-tuples for which the function $\sum_{i=1}^n L_i(\gamma)$ has a positive lower bound on $\Gamma$.

\begin{Theorem}
\label{thmintro-IdBoundary}
The  boundary $\dCharMaxWL(\Ga,\PSL(2,\R)^n)$  is
$\PP((\ML)^n)$, and $\MCG(\Sigma)$ acts properly discontinuously on $\PP((\ML)^n_{>0})$.
\end{Theorem}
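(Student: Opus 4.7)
The strategy is to reduce the boundary identification to Thurston's classical compactification of Teichm\"uller space applied factor by factor, and to deduce the proper discontinuity statement from the general systole result proved in \cite{BIPP19}. Under the projection to the $n$ factors, $\CharMax(\Ga,\PSL(2,\R)^n)\cong\Teich^n$, and the vectorial length function of $\rho=(\rho_1,\ldots,\rho_n)$ at $\gamma\in\Ga$ is simply the tuple $\vec{L}_\rho(\gamma)=(\ell_{\rho_1}(\gamma),\ldots,\ell_{\rho_n}(\gamma))$ of hyperbolic translation lengths in the $n$ factors. Thus points in $\dCharMaxWL(\Ga,\PSL(2,\R)^n)$ are precisely the projective limits of such $n$-tuples of Fuchsian length functions.

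For the inclusion $\dCharMaxWL\subseteq\PP((\ML)^n)$, I would take a divergent sequence $\rho^{(k)}=(\rho_1^{(k)},\ldots,\rho_n^{(k)})$ with $[\vec{L}_{\rho^{(k)}}]$ convergent. After extracting a subsequence, Thurston's compactification produces, for each coordinate $i$, a scaling factor $\lambda_i^{(k)}>0$ and a limit $\mu_i\in\ML$ with $\ell_{\rho_i^{(k)}}/\lambda_i^{(k)}\to i(\mu_i,\cdot)$. Renormalizing by the common factor $\lambda^{(k)}:=\max_i \lambda_i^{(k)}$, and passing to a further subsequence on which the ratios $\lambda_i^{(k)}/\lambda^{(k)}$ converge to some $t_i\in[0,1]$ with $\max_i t_i=1$, one sees that $\vec{L}_{\rho^{(k)}}/\lambda^{(k)}$ converges coordinatewise to the nonzero tuple $(t_1 i(\mu_1,\cdot),\ldots,t_n i(\mu_n,\cdot))\in(\ML)^n$. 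Conversely, any nonzero $(\mu_1,\ldots,\mu_n)\in(\ML)^n$ is realized by taking a Thurston-approximating Fuchsian sequence in each coordinate (and any fixed Fuchsian representation when $\mu_i=0$) and forming their product.

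For the topology, since $\ML$ is a topological cone over $\PP(\ML)\cong S^{6g-7}$, the projectivization $\PP((\ML)^n)$ is naturally the iterated join $\PP(\ML)\ast\cdots\ast\PP(\ML)$ of $n$ copies of $S^{6g-7}$, which is a sphere of dimension $n(6g-6)-1$. Finally, the proper discontinuity on $\PP((\ML)^n_{>0})$ follows from the general result of \cite{BIPP19} applied to $G=\PSL(2,\R)^n$, which is higher rank for $n\geq 2$: $\MCG(\Sigma)$ acts properly discontinuously on the $\Cc$-positive-systole subset of $\dCharMaxWL$, and under the identification above—using that all norms on $\Cc=\R_+^n$ are equivalent—this subset coincides with $\PP((\ML)^n_{>0})$ defined via $\inf_\gamma \sum_i L_i(\gamma)>0$.

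The main technical subtlety is the common normalization $\lambda^{(k)}$ in the first inclusion: coordinates that diverge at genuinely different Thurston rates must be rescaled uniformly, which is exactly what produces boundary points with some zero components and forces the join rather than product structure. The rest of the argument is a routine unpacking of the Weyl chamber length compactification topology (coordinate-wise projective convergence of length functions) combined with the classical statements of Thurston about $\Teich$ and $\ML$.
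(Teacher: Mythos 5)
Your first inclusion ($\bordC(\Teich^n)\subseteq\PP((\ML)^n)$) and your treatment of the topology and of proper discontinuity match the paper: convergence in the Weyl chamber length compactification is coordinatewise projective convergence with a \emph{common} scale sequence, each coordinate then lands in $\ML$ (possibly $0$) by Thurston, the boundary is the join of $n$ copies of $\PP(\ML)\cong S^{6g-7}$, and the proper discontinuity on the positive joint systole locus is quoted from \cite{BIPP19, BIPP21}. That part is fine.

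The gap is in the surjectivity direction. You write that any $(\mu_1,\dots,\mu_n)\in(\ML)^n$ ``is realized by taking a Thurston-approximating Fuchsian sequence in each coordinate \dots and forming their product,'' but Thurston's theorem only gives you, for each $i$, a sequence $\rho_i^{(k)}$ together with \emph{its own} scale sequence $\lambda_i^{(k)}$ such that $L_{\rho_i^{(k)}}/\lambda_i^{(k)}\to i(\mu_i,\cdot)$. For the product $(\rho_1^{(k)},\dots,\rho_n^{(k)})$ to converge to $[(\mu_1,\dots,\mu_n)]$ you need a \emph{single} scale sequence that works simultaneously for all coordinates; if, say, $\lambda_1^{(k)}/\lambda_2^{(k)}\to\infty$, the common renormalization kills the second coordinate and the product converges to $[(\mu_1,0,\dots,0)]$ instead. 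You correctly identify the common normalization as the main subtlety, but you locate it in the inclusion direction, where it is handed to you by the definition of the compactification; it is in the converse direction that it must be \emph{constructed}, and this is exactly the technical heart of the paper's argument. The paper resolves it with Lemma~\ref{lem-FixedScaleSequence}: given $L\in\ML$ and \emph{any} prescribed diverging sequence $(\lambda_k)$, there is a sequence $\rho_k$ in $\Teich$ with $\frac{1}{\lambda_k}L_{\rho_k}\to L$ exactly (not just projectively). Its proof is not routine: it uses that $\ov{\Teich}$ is a closed ball to produce a path to $[L]$, the properness and continuity of the minimal displacement function $\lambda_S$ to hit the prescribed scales by the intermediate value theorem, and then asymptotic cones plus length rigidity of minimal irreducible $\Gamma$-trees \cite[Theorem 3.2]{CuMo87} to pin down the limit to $L$ itself rather than a positive multiple $sL$. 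Taking $\lambda_k=k$ in every coordinate then makes the product construction work. Without this lemma (or an equivalent synchronization device), your converse direction does not go through.
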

We refer the reader to \S\ref{s-Boundary}, and in particular to Corollary~\ref{c-Wlength} for applications of this result 
to the study of compactifications induced by length functions on $\Gamma$ 
associated to various $\PSL(2,\R)^n$-invariant distance functions on the product $(\mathcal H^2)^n$ of $n$ hyperbolic planes. 
% \begin{Remark}
% \end{Remark}

\medskip

%%% Focus on $\PSL_2\times \PSL_2$
Inspired by the work of Duchin--Leiniger--Rafi \cite{DLR} and of Morzadec \cite{Mor}, we  give a geometric interpretation of boundary points in the case of $n=2$.
Duchin--Leiniger--Rafi introduced mixed structures to give a geometric compactification of the space of $\FlatDLR(\Sigma)$ of half translation structures on a compact surface $\Sigma$ up to isometry.
For the purposes of this paper we define an {\em $\R^2$-mixed
  structure}\footnote{See Remark~\ref{r-DLRmix} in \S\ref{s-Mix} for more details as well as a
  comparison with the structures considered in \cite{DLR} and in
  \cite{OT0,OT1}} on $\Sigma$
 as a triple
$\M=(\subS,\K,\vmLam)$ where $\subS$ is an open geodesic subsurface of
$\Sigma$, $\K$ is a half-translation structure on $\subS$, extendible
at boundary components regarded as punctures,   and
$\vmLam=(\Lam, \nu_1,\nu_2)$ is a {\em $2$-measured geodesic lamination}, that
is geodesic lamination $\Lam$ disjoint from $\subS$ and endowed with a pair of transverse
measures  $(\nu_1,\nu_2)$, with 
$\supp(\nu_1)\cup \supp(\nu_2)=\Lam$.
%
% \compl{To deal with flat pieces glued at a point :
%   The point do not correspond to a leaf of the lamination
%   with 0 weight (as I once thought), but the subsurface is allowed to have
% adjacent cc (as the complement of disjoint scc).}%
%
Note that the subsurface $\subS$ or the lamination $\Lambda$ may be
empty, so the space $\Mix(\Sigma)$ of mixed structures on $\Sigma$
contains both the space  $\Flat(\Sigma)$ of half-translation structures (up to
isotopy) and the space of $2$-measured geodesic laminations.
%%% Flat structures
The  half-translation
structure is equipped with a natural pair
$(\Fol_1,\Fol_2)$ of transverse vertical/horizontal measured
foliations. 
% This gives the canonical map 
% $\Flat(\Sigma) \to \ML\times
% \ML$, where $\Flat(\Sigma)$ denotes the space of isotopy classes of
% flat structures on $\Sigma$. 
% It is known \cite{GaMa91} 
% that this map is injective
% and that its image is the subset of $\vL=(L_1,L_2)\in\ML\times\ML$ of
% positive {\em systole} 
% $$\Syst(\vL)=\inf_{\gamma\in\Gamma}L_1(\gamma)+L_2(\gamma)$$
%
%%% Lengths
As a result a mixed structure $\M$ defines a natural pair $(\mLam_{\M,1},\mLam_{\M,2})$ of
associated ``vertical/horizontal'' measured geodesic laminations on $\Sigma$. 
Taking intersection functions, we obtain a map
$$I\colon  \Mix(\Sigma) \to \ML\times \ML$$
extending the natural map $\Flat(\Sigma) \to \ML\times \ML$. We  show in Proposition~\ref{prop-MLxMLtoMix} that $I$ is a bijection, under which $\Flat(\Sigma)$ corresponds to $\ML^2_{>0}$. Together with Theorem~\ref{thmintro-IdBoundary} this implies the following:
\begin{Theorem}
	\label{thmIntro1} 
	Let $\vL\colon \Gamma\to \R_+^2$ represent a point in the Weyl chamber length boundary of $\CharMax(\Gamma,\PSL(2,\R)^2)$. 
	Then 
	\begin{enumerate}
		\item $\vL$ is the $\R^2$-length
		function of a  unique 
		$\R^2$-mixed structure $\M=(\subS,\K,\vmLam)$ on $\Sigma$.

		\item Every non-empty
		$\R^2$-mixed structure on $\Sigma$ arises in this
		way.
	\end{enumerate}
\end{Theorem}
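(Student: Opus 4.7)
The plan is to deduce the theorem directly from the combination of Theorem~\ref{thmintro-IdBoundary} with the bijection $I\colon \Mix(\Sigma)\to(\ML)^2$ asserted in Proposition~\ref{prop-MLxMLtoMix}. First, by Theorem~\ref{thmintro-IdBoundary}, every boundary point of $\CharMax(\Ga,\PSL(2,\R)^2)$ in the Weyl chamber length compactification is represented, uniquely up to homothety, by a non-zero pair $(\mLam_1,\mLam_2)\in(\ML)^2$ via its intersection function $\vL(\ga)=(i(\ga,\mLam_1),\,i(\ga,\mLam_2))$.

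Second, I would apply Proposition~\ref{prop-MLxMLtoMix} to obtain the unique mixed structure $\M=(\subS,\K,\vmLam)$ with $I(\M)=(\mLam_1,\mLam_2)$. The essential point is to check that the $\R^2$-length function of $\M$, defined coordinate-wise as intersection with the associated vertical/horizontal measured geodesic laminations $\mLam_{\M,1},\mLam_{\M,2}$ on all of $\Sigma$ (which in turn decompose as the sum of the lamination piece $\nu_i$ and the lamination coming from the foliation $\Fol_i$ on the flat piece $\K$), agrees with $\vL$. Since by the very definition of $I$ one has $(\mLam_{\M,1},\mLam_{\M,2})=I(\M)=(\mLam_1,\mLam_2)$, this match is tautological. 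Uniqueness in (1) is then exactly the injectivity of $I$.

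Third, for part (2), I would run the argument backwards: given any non-empty mixed structure $\M$, set $(\mLam_1,\mLam_2):=I(\M)\in(\ML)^2$. Non-emptiness of $\M$ guarantees that at least one of the two laminations (or the flat piece, hence one of the foliations) is non-zero, so $(\mLam_1,\mLam_2)\neq(0,0)$. Theorem~\ref{thmintro-IdBoundary} then produces a boundary point whose length function coincides, by the computation in the previous paragraph, with that of $\M$.

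The heavy lifting is done outside of this theorem: the real obstacle is Proposition~\ref{prop-MLxMLtoMix}, which requires surgically reconstructing the subsurface decomposition $\Sigma=\subS\sqcup\Lam$ and the half-translation structure $\K$ on $\subS$ from the pair $(\mLam_1,\mLam_2)$ — roughly, $\Lam$ is the common locus where at most one of the two measures is supported, and $\K$ is obtained on the complementary subsurface from the pair of transverse foliations. Once that proposition is established, the theorem reduces to the routine bookkeeping outlined above.
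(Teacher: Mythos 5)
Your proposal is correct and follows exactly the paper's own route: the paper derives Theorem~\ref{thmIntro1} precisely by combining Theorem~\ref{thmintro-IdBoundary} (identifying the boundary with $\PP((\ML)^2)$) with Proposition~\ref{prop-MLxMLtoMix} (the bijection between $\ML\times\ML$ and $\Mix(\Sigma)$ via $\R^2$-length functions), with all the substantive work deferred to that proposition. You also correctly identify where the real content lies, so there is nothing to add.
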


% In
% particular for $\gamma\in\Gamma$ there are two well-defined
% intersection numbers $L_\M^i=i(\mLam_{\M,i},\gamma)$, and we call the pair 
% $\vL_\M(\gamma)=(L_\M^i(\gamma))_{i=1,2}$
% the {\em $\R^2$-valued $\M$-length} of $\gamma$.

\medskip

% Link with products of $\R$⁻trees
We now turn to the question of relating the geometric structures
appearing in the boundary of $\CharMax(\Gamma,G)$ to the
associated actions on $\R$-buidings and  finding  nice
invariant subsets.
In the case of $G=\PSL(2,\R)^n$, the $\R$-buildings appearing in the boundary
of $\CharMax(\Gamma,G)$ are products of $n$ $\R$-trees, and hence boundary
points are homothety classes of
$\Cc$-length functions of actions of $\Gamma$ on a product
of $n$ $\R$-trees.

Notice that in his thesis \cite{Mor} Morzadec used tree-graded spaces to obtain a
geometric compactification of the space of flat structures, and relate
them with the mixed structures of \cite{DLR}.
Also, in the case of $G=\PSL(3,\R)$ and $\Gamma$ a punctured surface
group, 
%for a $8\abs{\chi(\Sf)}$-dimensional real parameter family of points
the third named author associates in \cite{APdegFG}
to  large families of boundary points of  $\CharHitWL(\Gamma,\PSL(3,\R))$ 
explicit finite $\Cc$-simplicial complexes
whose universal cover, a  tree-graded space with flat surface
pieces, embbeds  equivariantly in the building preserving the natural $\Cc$-metric.

Our last result takes these perspectives in the study of the compactification of
the space of $\PSL(2,\R)\times\PSL(2,\R)$-maximal representations.
To be more precise,
% Dual R^2-tree graded space
%Given a mixed structure $\M$, 
a {\em tree-graded $\R^2$-space $(X, d_1,d_2)$}, is for us a space $X$ endowed with a pair 
$(d_1,d_2)$ of pseudometrics such that $d:=d_1+d_2$ is a metric, the {\em $\ell^1$-metric}, with respect to which $X$ is tree-graded  in the sense
of \cite{DrSa05}; 
we furthermore require that $X$  admits an action of $\Gamma$ preserving each
pseudometric. 
% for the {\em $\ell^1$-metric} $d:=d_1+d_2$, 
% The {\em $\R^2$-valued
% metric}
% $$\vd(x,y)=(d_i(x,y))_{i=1,2}$$
% the induced pair of pseudometrics $d_i\colon X_\M\times X_\M\mapsto
% \in\Gamma$
The  {\em $\R^2$-length} in $X$ of $\gamma\in\Gamma$ is then the pair
$$\vL_{X}(\gamma)=(L_{d_1}(\gamma),L_{d_2}(\gamma)$$
of translation lengths 
$$L_{d_i}(\gamma)=\inf_{x\in X}d_i(x,\gamma x)$$
of $\gamma$ for the pseudometric $d_i$.

We associate to each  mixed structure $\M$  an $\R^2$-tree graded space $(X_M, d_1,d_2)$ dual  to
$\M$,  whose pieces are either flat surfaces 
or $\R$-trees: % (this is in analogy with the notion of mixed structure in \cite{Mor}).

%% Dual R^2-tree graded space
%Given a mixed structure $\M$, 
%we  define a {\em $\R^2$-tree-graded space $(X_\M, d_1,d_2)$ dual  to
%	$\M$}. This is a space $X_M$ endowed with a pair 
%$(d_1,d_2)$ of pseudometrics
%and  an action of $\Gamma$ preserving each
%pseudometric, which is a tree-graded space in the sense
%of \cite{DrSa05} 
%% for the {\em $\ell^1$-metric} $d:=d_1+d_2$, 
%with
%pieces either flat surfaces 
%or $\R$-trees (analoguous to the notion of mixed structure in \cite{Mor}).
%It can be defined by gluing
%the dual trees corresponding to the lamination components, endowed
%with the pair of pseudometrics induced by the pair of
%transverse measures,  and the
%completions of the flat components, endowed  with the natural
%pair of horizontal and vertical pseudometrics, 
%see Section~\ref{s-DualTreeGradedSpace}. 
%% The {\em $\R^2$-valued
%% metric}
%% $$\vd(x,y)=(d_i(x,y))_{i=1,2}$$
%% the induced pair of pseudometrics $d_i\colon X_\M\times X_\M\mapsto
%% \in\Gamma$
%We define the  {\em $\R^2$-length} in $X_\M$ of $\gamma\in\Gamma$ as the pair
%$$\vL_{X_\M}(\gamma)=(L_{d_i}(\gamma))_{i=1,2}$$
%of translation lengths 
%$$L_{d_i}(\gamma)=\inf_{x\in X_\M}d_i(x,\gamma x)$$
%of $\gamma$ for the pseudometric $d_i$.
%%
%We define the {\em $\ell^1$-metric} on $X_\M$  as $d=d_1+d_2$. We will
%show that it corresponds to the path metric induced by the $\ell^1$-norm on the flat
%components (Lemma~\ref{lem-L=L1+L2Flat}).

%%% Main statment

\begin{Theorem}
\label{thmIntro} 
Let $\M=(\subS,\K,\vmLam)$ be an $\R^2$-mixed structure  on $\Sigma$ and  $\vL\colon \Gamma\to \R_+^2$ the associated $\R^2$-length
function. 
Then there is a tree-graded $\R^2$-space $X_M$ with length spectrum $\vL\colon \Gamma\to \R_+^2$, and satisfying the following universal property:
for any two actions of $\Gamma$ on  
$\R$-trees  $T_1,T_2$ with  length function
$(L_{T_1},L_{T_2})=\vL$, there is an equivariant embedding 
$$f\colon X_\M \mapsto T_1\times T_2$$ 
preserving each pseudometric $d_i$
% $$d_i(f(x),f(y))=d_i(x,y)$$
(and  isometric for the $\ell^1$-metric).
% $$f\colon \Sigmat \mapsto T_1\times T_2$$ 
% preserving the natural $\R^2$-valued pseudometric. 
\end{Theorem}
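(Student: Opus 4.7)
The plan is to first construct $X_\M$ as an explicit quotient of $\wt{\Sigma}$ combining the lifted flat structure on the pieces with the dual-tree structure of the lamination, then to verify the tree-graded property and the length spectrum, and finally to establish the universal property via the classical Skora--Otal identification of $\R$-tree actions of surface groups with dual trees of measured laminations.

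For the construction, I would endow $\wt{\Sigma}$ with two pseudodistances $d_1, d_2$ by setting, for $x, y \in \wt{\Sigma}$,
\[
d_i(x,y) = \mu_i^{\K}([x,y] \cap \subSt) + \wt{\nu}_i([x,y]),
\]
where $[x,y]$ is the hyperbolic geodesic segment, $\mu_i^{\K}$ is the transverse measure of the $i$-th foliation of the lifted half-translation structure $\wt{\K}$, and $\wt{\nu}_i([x,y])$ denotes the total mass of $\wt{\nu}_i$ on $[x,y]$. Define $X_\M := \wt{\Sigma}/{\sim}$ where $x \sim y$ iff $d_1(x,y) = d_2(x,y) = 0$; the descended pseudometrics $d_1, d_2$ and the $\Gamma$-action preserve both. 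I would then verify that $(X_\M, d_1 + d_2)$ is tree-graded in the Drutu--Sapir sense, with pieces in bijection with the connected components of $\subSt$ (each carrying its flat metric), the remaining cut-structure coming from the dual tree of $(\Lamt, \wt{\nu}_1 + \wt{\nu}_2)$. The two axiomatic checks, namely pairwise one-point intersection of pieces and the containment of every simple geodesic triangle in a single piece, reduce to the disjointness of $\subSt$ and $\Lamt$ together with the classical tree-graded decomposition of a surface cut along a geodesic lamination. Translation lengths in $d_i$ computed along axes yield $\vL_{X_\M}(\gamma) = \vL(\gamma)$ directly.

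For the universal property, suppose $T_1, T_2$ are $\R$-trees carrying $\Gamma$-actions with length functions matching $(L_1, L_2) = \vL$. Each $L_i$ is the length function of the measured geodesic lamination $\mLam_{\M,i}$; by the Skora--Otal classification of surface-type $\R$-tree actions, the minimal $\Gamma$-invariant subtree of $T_i$ is equivariantly isometric to the dual $\R$-tree $T(\mLam_{\M,i})$. By construction, there is a canonical equivariant surjection $\pi_i : X_\M \to T(\mLam_{\M,i})$ preserving $d_i$: on each flat piece, $\pi_i$ is the leaf-space projection for the $i$-th foliation of $\wt{\K}$; on the dual-tree part, it is the quotient by $d_i = 0$. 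Composing with the inclusions $T(\mLam_{\M,i}) \hookrightarrow T_i$ and setting $f := (\pi_1, \pi_2) : X_\M \to T_1 \times T_2$ produces a $\Gamma$-equivariant map preserving each $d_i$. Injectivity is automatic: $f(x) = f(y)$ forces $d_1(x,y) = d_2(x,y) = 0$, which by definition of $X_\M$ means $x = y$; hence $f$ is an equivariant embedding, isometric for the $\ell^1$ metric $d_1 + d_2$.

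The main obstacle is the compatibility between the flat-piece embedding and the Skora--Otal identification. Concretely, one must verify that the horizontal (resp.\ vertical) leaf-space tree of each lifted flat component sits as a $\Gamma$-equivariant subtree of $T(\mLam_{\M,1})$ (resp.\ $T(\mLam_{\M,2})$), and that the joint map $x \mapsto (\pi_1(x), \pi_2(x))$ is injective on the interior of a flat piece. This uses the transversality of horizontal and vertical foliations of a half-translation surface, and it is precisely here that the extendibility hypothesis on $\K$ at boundary components enters. A secondary technical point is the verification of the tree-graded axioms in the genuinely mixed case: the dual-tree portion can be highly branched, and one must ensure that no simple geodesic triangle of $X_\M$ straddles two distinct flat pieces, which ultimately rests on the fact that complementary regions of $\Lam$ adjacent to $\subS$ still separate $X_\M$ into a single cut point.
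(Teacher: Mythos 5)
Your overall strategy --- realize $X_\M$ as a quotient of $\Sigmat$ by a pair of pseudometrics built from the foliations of $\K$ and the measures $\nu_i$, then obtain the embedding as the diagonal of the two quotient projections composed with an identification of the minimal subtrees of $T_1,T_2$ with the dual trees $T(\mLam_{\M,1}),T(\mLam_{\M,2})$ --- is the same as the paper's (\S\ref{s-DualTreeGradedSpace}--\S\ref{s-EmbeddingInProductOfTrees}). But two of your steps are not justified as written. First, the Skora--Otal theorem classifies \emph{small} minimal actions of surface groups as duals of measured laminations; here the actions on $T_1,T_2$ are only assumed to have the prescribed length functions, and smallness is not a hypothesis, so that citation does not give you the identification you need. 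What does give it --- and what the paper uses --- is Culler--Morgan length rigidity: an action whose length function lies in $\ML$ is irreducible, and two minimal irreducible $\Gamma$-trees with the same length function are equivariantly isometric \cite[Theorem 3.2]{CuMo87}. This applies with no smallness assumption.

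Second, the point you yourself flag as ``the main obstacle'' is a genuine gap, and it is precisely where the paper does work you have not done. For the pieces of $X_\M$ over $\subSt$ to be geodesic subsets carrying the flat $\ell^1$-metric (which axiom (TG2) and the embedding both require), and for $f$ to be $\ell^1$-isometric there, you need the identity $d_{\K}=d_{\K,1}+d_{\K,2}$ on a flat piece, equivalently that a CAT(0) geodesic of the half-translation structure projects to a geodesic in \emph{both} dual trees $T(\Fol_1)$ and $T(\Fol_2)$ simultaneously. This is exactly Lemma~\ref{lem-L=L1+L2Flat}, proved using that the leaves of each foliation are flat geodesics and that CAT(0) spaces are uniquely geodesic; the analogous simultaneous-minimization statement on lamination pieces is Proposition~\ref{prop- proj in dual tree preserves geods}. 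Without these, $d_1(x,y)=d_2(x,y)=0$ need not force $x=y$ inside a flat piece, and the tree-graded structure is not established. A smaller but real issue: your formula $\wt{\nu}_i([x,y])$ is ill-behaved when $\nu_i$ has atoms (closed leaves of positive weight), since the measure of a closed segment with an endpoint on an atomic leaf is ambiguous and geodesics no longer realize the infimum over transverse paths; the paper resolves this by first blowing up each atomic leaf into a foliated flat annulus and only then forming the quotient.
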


%%% Link with the building interpretation of boundary points 
In particular, for any point in the boundary
$\dCharMaxWL(\Ga,\PSL(2,\R)^2)$, with associated mixed structure $\M$,
the corresponding action on a $\R$-building (here a product of two
trees) preserves an $\Cc$-isometrically equivariantly embbeded copy of
$X_\M$.
%
% This generalizes the work of the third named author on $\CharHitWL(\Gamma,\PSL(3,\R))$: \cite{APdegFG} associates to  large families of points of  $\CharHitWL(\Gamma,\PSL(3,\R))$ 
% explicit finite $\Cc$-simplicial complexes
% whose universal cover, a tree-graded space with flat surface
% pieces, embbeds  equivariantly in the building preserving the natural $\Cc$-metric.

We construct the space $X_M$ by gluing
the dual trees corresponding to the lamination components, endowed
with the pair of pseudometrics induced by the pair of
transverse measures,  together with the
completions of the flat components, endowed  with the natural
pair of horizontal and vertical pseudometrics  
(see Section~\ref{s-DualTreeGradedSpace} for more details). We 
show that that the $\ell^1$-metric on $X_\M$ corresponds to the path metric induced by the $\ell^1$-norm on the flat
components (Lemma~\ref{lem-L=L1+L2Flat}).

A more analytic, and independent approach to compactifications of the space of maximal representations in rank 2 
Hermitian Lie groups has been independently pursued by Ouyang \cite{Ouy}, Ouyang--Tamburelli \cite{OT1,OT2}, and Martone--Ouyang--Tamburelli \cite{MOT}. 
In their work, they consider the length spectrum of the negatively curved metric induced on the unique invariant minimal surface associated to a maximal representation. 
They obtain in this way a compactification that they denote $\ov{{\rm Ind}(S)}$. In the case of $G=\PSL(2,\R)\times \PSL(2,\R)$, 
Martone--Ouyang--Tamburelli \cite{MOT} also consider a refinied compactification dominating $\ov{{\rm Ind}(S)}$, 
which they prove is a ball of dimension $12g-12$. 
Their notion of \emph{$(\ov{A_1^+\times A_1^+},2)$-mixed structure} introduced in this work is equivalent to our of $\R^2$-mixed structure defined above.
Moreover they prove that also the boundary of $\CharMax(\Gamma,\PSL(2,\R)^2)$ in their refined compactification can be identified with such structures. 
%In their work, they don't consider the Weyl chamber valued length associated to a representation through its natural action on the symmetric space, but rather the length spectrum of the negatively curved metric induced on the unique minimal surface (the existence of such negatively curved metric is guaranteed in rank 2 by the work of Schoen \cite{Sch}, resp. Collier-Tholozan-Toulisse \cite{CTT}). They obtain this way a compactification that they denote $\ov{{\rm Ind}(S)}$. In the case of $G=\PSL(2,\R)\times \PSL(2,\R)$, Martone-Ouyang-Tamburelli \cite{MOT} also consider a refinied compactification dominating $\ov{{\rm Ind}(S)}$, which they prove being a ball of dimension $12g-12$. In this study, they independently introduced the  notion of mixed structures, which they call \emph{$(\ov{A_1^+\times A_1^+},2)$-mixed structure}, a notion  equivalent to the notion of mixed structures we consider here. They prove that also the boundary of $\CharMax(\Gamma,\PSL(2,\R)^2)$ in their refined compactification can be identified with such structures. 

\textbf{Structure of the paper:}
The first two sections largely consist of preliminaries: we  recall in \S\ref{s-Comp} the construction of the Weyl chamber length
compactification for  $G=\PSL(2,\R)^n$, and we discuss in \S\ref{sec:PML} the relation between measured foliations, laminations, trees and geodesic currents. 
The only new material in  \S\ref{sec:PML} is in \S\ref{ss-Rntree} 
where we introduce the key notion of the $\R^n$-tree dual to an $n$-measured geodesic lamination.
With these at hand we prove Theorem
\ref{thmintro-IdBoundary} in \S\ref{s-Boundary}.
We introduce and study mixed structures in \S\ref{sec:mix}, where we also prove Theorem~\ref{thmIntro1}.
%\S\ref{s-Flat}, and use them to introduce, in \S\ref{s-Mix} the notion of $\R^2$-mixed structure on $\Sigma$.
%We show  in \S\ref{s-Mix} that
%the natural map $\Mix(\Sigma) \to \ML\times \ML$ is a 
%bijection (Proposition~\ref{prop-MLxMLtoMix}). 
%\S\ref{s-DualnTree} is devoted to the study of the $\R^n$-tree dual to an $n$-measured geodesic lamination, an important ingredient to 
We construct in \S\ref{s-DualTreeGradedSpace}  the  $\R^2$-tree-graded space $(X_\M, d_1,d_2)$
dual  to a mixed structure $\M$. We prove in \S\ref{s-EmbeddingInProductOfTrees} that this tree graded space embeds isometrically  in any product of trees (Proposition~\ref{prop-EmbeddingInProductOfTrees}), which concludes the proof of Theorem~\ref{thmIntro}.

%  \section{Preliminaries}

% % %%% Space of  lamination lengths functions
%  We denote by $\ML\subset \R_+^\Gamma$ the space of functions
%  $\Gamma\to\R_+$ that are intersection
%  functions of measured geodesic laminations, or  equivalently, of measured foliations, on $\Sigma$. 

% %%% Maximal representations
% We denote by $\CharMax(\Gamma,G)$ the space of maximal
% representations $\Gamma\to G$. 
% %
% We  identify  the Teichmüller space $\Teich=\Teich(\Sigma)$  of
% $\Sigma$ with the quotient space $\CharMax(\Gamma,\PSL(2,\R))/\PSL(2,\R)$.

% For $G=\PSL(2,\R)\times\PSL(2,\R)$ we shall identify 
% the space $\CharMax(\Gamma, G)/G$ of maximal representations 
% with 
% $\CharMax(\Gamma,\PSL(2,\R))/\PSL(2,\R) \times\CharMax(\Gamma,\PSL(2,\R))/\PSL(2,\R)=\Teich\times\Teich$.

%\section{Maximal representations}
%%% Or
%%% Compactification of $\Teich\times\Teich$.
%%% Teichm\"uller space and $\ML$

%We now recall the construction of the Weyl chamber length compactification.

\section{The Weyl chamber length compactification}
\label{s-Comp}
The reference for this section is \cite{APcomp}.
%
%% commA: here general G ?

\subsection{Weyl chamber and $\R^n$-valued metrics}
\label{s-wmetrics}
Given a finite reflection group $(\Aa,W)$ and a fixed model closed Weyl chamber $\Cc$,
we define a \emph{$\Cc$-valued pseudometric} on a space $X$ as a function 
$\vd \colon X \times X \to \Cc$ satisfying $\vd(x,x)=0$ and
\begin{description}
\item[Triangular inequality] $\vd(x,z)\leq \vd(x,y)+\vd(y,z)$;
\item[$\opp$-symmetry] $\vd(y,x)=\vd(x,y)^\opp$,
\end{description}
where $\opp$ is the opposition involution on $\Aa$, and $\Aa$ is
endowed with the partial order with positive cone the
Euclidean dual  $\Cc^*$ of $\Cc$.
We will call $\vd$ a {$\Cc$-valued metric} if in addition
it is separated, namely $\vd(x,y)=0$ implies $x=y$.

%%% R^n-metrics
In this article we are interested only in the
 reflection group associated to
the semisimple Lie group $G=\PSL(2,\R)^n$, which is
$\Aa=\R^n$ with Weyl group $W=\{\pm\id_\R\}^n$.
The model closed Weyl chamber is $\Cc=\R_+^n$.
In this case the involution opposition is trivial,
$\Cc^*=\Cc=\R_+^n$ and a $\Cc$-valued (pseudo)metric amounts simply
to a $n$-tuple
\[\vd=(d_i)_{i=1,\ldots,n}\]
of usual $\R$-valued (pseudo)metric $d_i$. This gives rise to the notion of a  {\em $\R^n$-(pseudo)metric}:
\begin{Definition}\label{d.Rnmetric}
An \emph{$\R^n$-pseudometric space} is a set $X$ together with an $n$-tuple $\vd=(d_i)_{i=1,\ldots,n}$ of pseudometrics. We say that $(X,\vd)$ is an \emph{$\R^n$-metric space }if $d:=\sum_{i=1}^n d_i$ is a metric.
\end{Definition}	
Observe that even if $(X,\vd)$ is an $\R^n$-metric space, it is possible that none of the pseudo-metrics $d_i$ is separated.
\begin{Example}
	Examples of $\R^n$-metric spaces are products 
	$$X=X_1\times\ldots\times X_n$$ 
	of ordinary metric spaces, as well as subspaces thereof with the induced metric. Another example that will play an important role in our paper are the $\R^2$ metric naturally induced on the universal cover of a half translation surface, see \S\ref{s-Flat} for details. Another relevant example is given by simplicial trees with $\R^2$-valued edge lengths. This is discussed, and generalized, in \S\ref{ss-Rntree}.
\end{Example}	

In our case of interest\footnote{This is true for general finite reflection groups, but considerably harder to prove \cite{APcd}.} 
it is easy  to prove that, for any  $W$-invariant norm $N\colon \Aa=\R^n\to \R_+$,  $N$ is not decreasing on each variable on $\Cc=\R_+^n$, that is for the partial order introduced above.
As a result, the $\Cc$-valued (pseudo)metric
$\vd$ on $X$ induces an  $\R$-valued (pseudo)metric by setting
\[d_N(x,y):=N(\vd(x,y)) \;.\]
%% for later use
Note that $\vd(x,y)=0$ if and only if  $d_N(x,y)=0$, in particular $\vd$ is separated if and only
if $d_N$ is separated.

Norms $N\colon \R^n\to \R$ of particular interest to us are the $\ell^2$ and
$\ell^1$ norms, which we denote, respectively, by $\|\cdot\|_2$ and $\|\cdot\|_1$. They give the associated \emph{Euclidean  (pseudo)metric}
\[d_{\|\cdot\|_2}(x,y):=\sqrt{\sum_{i=1}^n d_i(x,y)^2}\]
and
\emph{$\ell^1$-(pseudo)metric}
\[d^1(x,y):=\sum_{i=1}^n d_i(x,y)\]
on $X$. We will also denote $d_{\|\cdot\|_2}$ by $\lVert \vd \rVert_2$
and $d^1$ by $\| \vd \|_1$.

\begin{Remark}
If $X=X_1\times\ldots\times X_n$ is a product, then $d_{\|\cdot\|_2}$ is the usual product metric, in particular $(X,d_{\|\cdot\|_2})$ is CAT(0) if and only if each of the $X_i$ are. However, in general,  $(X,d_{\|\cdot\|_2})$ is not a geodesic metric space. For example if $X$ is a simplicial tree with $\R^2$-valued edge lengths, the induced metric space $(X,d_{\|\cdot\|_2})$ is geodesic if and only if the two distance functions $d_i$ are proportional.
\end{Remark}	

\subsection{The Weyl chamber valued length function on $\PSL(2,\R)^n$}

%%% On $(\mathcal H^2)^n$
The symmetric space associated to $\PSL(2,\R)^n$ is the product $X=(\mathcal H^2)^n$ of
$n$ copies of the hyperbolic plane $\mathcal H^2$. On $X$ the natural
Weyl-chamber valued distance 
$\vd \colon X \times X \to \R_+^n$ is
simply the product distance 
\[\vd(x,y)=(d(x_i,y_i)_i)_{i=1,\ldots,n}\;.\]

%%% Weyl chamber length
The {\em Weyl chamber length} (or {\em  $\R^n$-length}) of $g=(g_i)_{i=1,\ldots,n}\in\PSL(2,\R)^n$ 
can be defined as 
\begin{equation}\label{e.wL}
	\vL(g):=\inf_{x\in X}\vd(x,gx)
	\end{equation}
where the infimum is considered with respect to the partial order on $\R^n$  with positive cone
$\R_+^n$. 
% It takes value in $\R_+^n$.
As the metric space $X$ is a product, it boils down to taking the
list  of the usual lengths in each factor:
$$\vL(g)=(\L(g_i))_{i=1,\ldots,n}.$$
Here $\L(g):=\inf_{x\in \mathcal H^2}d(x,gx)$ is the usual translation length
of $g$ on the hyperbolic plane $\mathcal H^2$. This ensures in particular that the infimum in \eqref{e.wL} exists.

%%% Higher rank voc
%% Weyl group $W=\{\pm \Id\}^2$
% The model flat of the semisimple Lie group $G=\PSL(2,\bR)\times\PSL(2,\bR)$ is 
% $\Aa=\bR^2$ with model closed Weyl chamber $\Cc=\bR_+^2$,
% so this coincides with the Weyl chamber-valued length function 
% $\ellC(g)$ of $g$ in $G$ defined in \cite{APcomp}.
%
%\commev{def via Weyl chamber distance ?}%

%%% Length functions
The {\em length function} of a representation $\rho\colon \Gamma\to \PSL(2,\R)$
is
$$L_\rho:=\L\circ \rho\colon \Gamma\to\R_+ \;.$$
Similarly, the {\em Weyl chamber length function} (or {\em
  $\R^n$-length function})
of  a representation $\rho\colon \Gamma\to \PSL(2,\R)^n$ is
 $$\vL_\rho:=\vL\circ \rho\colon  \Gamma\to\R_+^n \;. $$
Of course when  $\rho=(\rho_i)_{i=1,\ldots,n}$, it holds 
$\vL_\rho=(L_{\rho_i})_{i=1,\ldots,n}$.

\subsection{The Weyl chamber length compactification}
We endow the space $(\R_+^n)^\Gamma$ of functions
$\vL\colon \Gamma\to\R_+^n$  with the topology of pointwise
convergence, and the space
 $\PP((\R_+^n)^\Gamma)$ of homothety classes of non-zero functions 
$\vL$
%in $(\R^2)^\Gamma-\{0\}$
 with the quotient topology.
The map
% {\em Weyl chamber length spectrum map} 
$$\begin{array}{cccc}
\CalvL:&\Hom(\Ga,\PSL(2,\R)^n) &\to& (\R_+^n)^\Gamma\\
&\rho &\mapsto& \vL\circ\rho  
\end{array}$$
% from $\Hom(\Ga,\PSL(2,\R)^n)$ 
% to $(\R_+^n)^\Gamma$
%$\simeq \R^\Gamma\times \R^\Gamma$ 
induces a continuous map 
$$\PCalvL\colon \Teich^n \to \PP((\R_+^n)^\Gamma)$$
equivariant with respect to the mapping class group action.
The injectivity of 
$\Teich \to \PP\R_+^\Gamma$ implies that the map $\PCalvL$ is  injective as well.
The {\em Weyl chamber length compactification} 
$\compC{\Teich^n}$ of $\Teich^n$
is by definition the closure  of its image 
(which is a compact set). 
We denote
\[\bordC(\Teich^n):=\compC{\Teich^n}-\PCalvL(\Teich^n)\subset \PP((\R_+^n)^\Gamma)\]
its boundary.

\subsection{Other length compactifications}
Recall from Section~\ref{s-wmetrics} that any $W$-invariant norm  $N\colon \R^n\to \R$ induces a metric $d_N:=N(\vd)$ on $X=(\mathcal H^2)^n$, and thus a corresponding 
 length function, the {\em $N$-length} function $L_{d_N}$.  
We use this to construct the {\em $N$-length compactification}
$\compN{\Teich^n}$ of $\Teich^n$, associated to $L_{d_N}$.
%
% 
%
% $$L_N(g):=\inf_{x\in X}d_N(x,\gamma.x)$$

It is easy to verify that, since $N$ is not decreasing on each variable on $\Cc=\R_+^n$,  and  $\Teich^n$ is a product, it holds 
  $L_{d_N}(g)=N(\vL(g))$.
The Weyl chamber length compactification then dominates naturally
$\compN{\Teich^n}$:
the restriction of the natural  map
\[
  \begin{array}[t]{rll}
    (\R_+^n)^\Gamma &\to& \R_+^\Gamma\\
    \vL&\mapsto& N\circ\vL
  \end{array}
\]
induces a continuous $\Out(\Gamma)$-equivariant surjective map
\[\compC{\Teich^n} \to \compN{\Teich^n} \]
restricting to identity on $\Teich^n$, see \cite{APcomp}.

\section{Trees dual to measured geodesic laminations}\label{sec:PML}

%%% Global reference
We refer the reader to \cite[Chapter 11]{Kap09} for
% the background material
preliminaries on measured laminations, 
measured foliations, and the identification between equivalence classes of measured
foliations and measured geodesic laminations on hyperbolic surfaces. The material in Sections~\ref{ss-ml}, \ref{ss-gc}, and \ref{ss-dt} is classical, while the viewpoint in Section~\ref{ss-Rntree} is new, and important for our paper. 

\subsection{Measured foliations}\label{ss-mf}
%%% ref : [FLP]
Let $\Fol$ be transverse measured foliation on
on a topological surface $\Sigma$
with fundamental group $\Gamma$.
%%% The pseudometric associated to a measured foliation
We denote by 
$$i(\Fol,c)$$
the measure of a path $c\subset \Sigma$
with respect to $\Fol$.
The intersection of $\Fol$ with $\gamma\in\Gamma$ is defined as 
\[i_\Fol(\gamma):=\inf_c i(\Fol,c)\]
where the infimum is taken over all closed loops $c$ transverse to
$\Lam$ and freely homotopic to $\gamma$.
We denote by
\[I_\Fol:=i(\Fol,\cdot)\colon \Gamma\to\R_+\]
the corresponding intersection function.

%%% The pseudometric associated to a measured foliation
This may be reformulated in terms of pseudometrics.
We denote by $d_{\Fol}$ the pseudometric
on the universal cover $\Sigmat$ of $\Sigma$
associated with the lift $\Folt$ of $\Fol$:
it is defined by the formula
\[d_\Fol(x,y) :=\inf_c i(\Folt, c)\]
where the infimum is taken on  paths $c$
joining $x$ to $y$ in $\Sigmat$ and transverse to $\Folt$.
%%% length function of a pseudometric
The length function $L_d\in (\R_+)^\Gamma$
of a pseudometric $d$ on a $\Gamma$-space $X$ is defined as
\[L_d(\gamma):=\inf_{x\in X} d(x,\gamma x)
  \; .\]
By definition, the length function of $d_\Fol$ on $\Sigmat$ is  the
intersection function of $\Fol$:
\[L_{d_\Fol}=I_\Fol.\]
%
%%% Dual tree
The dual tree of a measured foliation $\Fol$ is defined as the quotient metric space 
$T(\Fol):=\Sigmat/d_\Fol$ of $\Sigmat$ by the pseudometric $d_\Fol$.
% and we will denote by $p_\Fol\colon \Sigmat\to T(\Fol)$
% the corresponding projection.
The tree $T(\Fol)$ inherits an action of $\Gamma$ by isometries,
with associated length function
$$L_{T(Fol)}=I_\Fol \;.$$

%namely, with the same intersection function
\subsection{Measured geodesic laminations}\label{ss-ml}
%%% Measure of a path and Intersection function
Let $\mLam=(\Lam,\nu)$ be a measured lamination
on $\Sigma$.
As for measured foliations, we denote by $i(\mLam, c)$
the $\mLam$-measure of a path $c\colon [0,1] \to \Sigma$
transverse  to $\Lam$. 
%
% Intersection function of a measured lamination
The intersection of $\mLam$ with $\gamma\in\Gamma$ is defined as 
 \[i(\mLam, \gamma):=\inf_c i(\mLam,c)\] 
where the infimum is taken over all closed loops $c$ transverse to
$\Lam$ and freely homotopic to $\gamma$,
and we denote by
\[I_\mLam:=i(\mLam,\cdot)\colon \Gamma\to\R_+\]
the corresponding intersection function.

%%% The pseudometric associated to a measured lamination
%%% this is in  [Kap09]
We denote by $d_\mLam$ the  pseudometric
on $\Sigmat$  associated with $\mLam$:
this is the pseudometric defined on $\Sigmat-\Lamot$,  where
$\Lamot$ is the set of atomic\footnote{the leafs with
	positive weight} leafs
of the lift $\mLamt=(\Lamt,\nu)$ of $\mLam$,
and
by the formula
\[d_\mLam(x,y) :=\inf_c i(\mLamt, c)\]
where the infimum is taken on  paths $c$
joining $x$ to $y$ in $\Sigmat$ and transverse to $\Lamt$.
By definition, the length function of $d_\mLam$ on $\Sigmat-\Lamot$ is  the
intersection function of $\mLam$:
\[L_{d_\mLam}=I_\mLam.\]

We  refer to \cite{Kap09} for the classical correspondence between measured foliations and measured geodesic laminations on hyperbolic surfaces.
\subsection{Geodesic currents}\label{ss-gc}
We refer to
%%% (closed surfaces)
\cite{Bonahon88} or  \cite[\S 8]{Martelli}
for the background material in the case of closed geodesic surfaces,
%the notion of geodesic current on a closed hyperbolic surface
 %$\Sigma =\Gamma\backslash \mathcal H^2$, 
% 
%the intersection $i(\mu,\nu)$ of two geodesic currents $\mu,\nu$ on $\Sigma$,
%
% \cite[\S 8.3.4]{Martelli}
%and the bijective correspondence between measured geodesic lamination with full support on $\Sigma$
%and  geodesics currents $\mu$ on $\Sigma$ with $i(\mu,\mu)=0$, 
and for
instance to \cite{BIPP19} for the generalisation to finite type surfaces.

A \emph{geodesic current} on a finite type hyperbolic surface $\Sigma=\mathcal H^2/\Gamma$
is a flip-invariant $\Gamma$-invariant positive Radon measure on the
space
% $\calG(\Sigma)$
of unoriented, uparametrized geodesics of $\mathcal H^2$,
that may be identified with the space 
\begin{equation*}
\calG(\mathcal H^2):=\{(x,y)\in(\partial\mathcal H^2)^2:\,x\neq y\}
\end{equation*}
of distinct pairs of points in the boundary at infinity $\partial\mathcal H^2$
of $\mathcal H^2$.
%
% %%% Examples
A basic example is the current $\delta_c$
associated to a closed geodesic $c$ in $\Sigma$, which is defined as
the sum of the Dirac masses on the  lifts of $c$ to $\mathcal H^2$.
% which is a discrete $\Gamma$-orbit in $\calG(\mathcal H^2)$.
% \begin{equation*}
%  \delta_c:=\sum_{\eta\in\pis/\langle\gamma\rangle}\delta_{\eta(\gamma_-,\gamma_+)}\,,
% \end{equation*}
%
%%% Intersection form
Recall that the Bonahon intersection $i(\mu,\nu)$  of two geodesic currents
$\mu,\nu$ is defined as  the $(\mu\times\nu)$-measure
of any Borel fundamental domain for the $\Gamma$-action on the space
$\calDG(\mathcal H^2)\subset \calG(\mathcal H^2)\times\calG(\mathcal H^2)$ of pairs  of transverse
geodesics $(g,h)$.

%%% Seeing measured geodesic laminations as geodesic currents
We refer for instance to \cite[\S 8.3.4]{Martelli}
for the bijective correspondence
$\mLam \mapsto \mu_ \mLam$
between measured geodesic laminations $\mLam=(\Lam,\nu)$
of full support  $\Lam$ and  geodesics currents $\mu$ on
$\Sigma$ with $i(\mu,\mu)=0$,  equivalently currents $\mu$ such that no two geodesics in
the support of $\mu$ intersect transversally.
% short description 
The geodesic current $\mu_\mLam$ has support $\Lam$, and,
for each geodesic arc $c$ transverse to $\Lam$,
the restriction of $\mu_\mLam$ to the set of geodesics $g$ of $\Lam$
intersecting $c$ is the pullback of the $\mLam$-mesure on $c$ by the
map $g \mapsto g\cap c$.
%
%%% intersection function correspond.
The notions of intersection then coincide as
\begin{equation*}
i(\mu_\mLam,\delta_c)=i(\mLam,c)\,,
\end{equation*}
for all closed geodesic $c$ in $\Sigma$.
Note that the union of two measured geodesic laminations with
disjoint support correspond to the sum of the associated currents.
From now on we will freely identify $\mLam$ with $\mu_\mLam$ whenever convenient.

Given a geodesic current $\mu$ on $\Sigma$, and a geodesic subsurface $\subS$ of $\Sigma$, we denote by 
$\mu|_{{\subS}}$ the restriction of $\mu$ to the subsurface $\subS$, namely the geodesic current 
$$\mu_{{\subS}}:=\chi_{\calG(\subS)}\mu,$$
where $\chi_{\calG(\subS)}$ is the characteristic function of the set $\calG(\subS)$ of geodesics whose projection lies in $\subS$. In general a geodesic current of full support might restrict to the zero current on a proper subsurface (this is the case when $\mu$ is the Liouville current of a hyperbolic structure), but if, for every boundary component $c$ of $\subS$, $i(\mu,\delta_c)=0$, then for every $\gamma\in\pi_1(\subS)$, $i(\mu,\delta_\gamma)=i(\mu|_{\subS},\delta_\gamma)$ \cite[Proposition 4.13]{BIPP21}.  

\subsection{Dual tree of a measured geodesic lamination}\label{ss-dt}
\label{s-DualTreeOfMLam}

%%% We need also the dual trees of the laminations $\mlamb_i$ on
%%% $\Sigmabt$ not hyperbolic ?
  Let $\mLam=(\Lam,\nu)$ be a measured geodesic lamination on
  an hyperbolic surface $\Sigma$.
  We now recall the construction of the associated dual
$\R$-tree.
We follow the construction of \cite[\S 11.12]{Kap09}.

%%% Blow-up of a measured lamination
%%% nec defined when support is not full (for n-lam)

%%% Blow-up an hyperbolic surface along a geodesic lamination
We first get rid of the atoms blowing-up along atomic leafs:
for each isolated leaf $c$ of $\Lam$,
cut $\Sigma$ along $c$ and insert an
annulus $B(c)=c\times [0,1]$, foliated by the parallel
circles $c\times {t}$. 
We endow $\Sigmab$ with the locally CAT(0)  metric $\metb$ on
equal to the original metric $\met$ of $\Sigma$
outside the annuli $B(c)$ and to the flat metric on
$B(c)$.
This gives a locally CAT(0) surface $(\Sigmab,\metb)$
homeomorphic to $\Sigma$,
with a geodesic  lamination $\Lamb$.
%%% with no isolated leafs
%
We call $(\Sigmab,\Lamb)$ the {\em blow-up}  of  $(\Sigma,\Lam)$.
% 
%%% Transverse measure
The {\em blow-up}  of the transverse measure $\nu$ on $\Lam$ is
the non-atomic transverse measure $\nub$ on $\Lamb$,
obtained from $\nu$ by giving to each foliated annulus $B(c)$ the transverse
measure $\nu(c)dt$ on $[0,1]$.
%
%
%
%%% Associated pseudometric
The pseudometric $d_{\mLamb}$ associated to the measured lamination $\mLamb:=(\Lamb,\nub)$
is then a continuous and  everywhere-defined path
pseudometric on $\widetilde{\Sigmab}$.
%
%
 % defining as usual $d_{\mLam}(x,y)$ as the infimum of the
 % $\nub$-measure  of a
 % path between $x$ and $y$ transverse to $\Lamb$.
 %
 %%%  The R-tree
 The {\em $\R$-tree dual  to $\mLam$} is defined as the quotient
metric space $T(\mLam):=\Sigmabt/d_\mLamb$, whose metric will be
denoted by $d_\mLam$.

%%% The geodesics of $\Sigmabt$
It is easy but crucial to see that
the geodesics of $T(\mLam)$ are the projections of the $\metb$-geodesics of
$\Sigmabt$.
%
%%% Notion of geodesic
In the following lemma, and in the rest of the article,
when we write \emph{geodesic} we mean 
a path $t\mapsto c(t)$ that is
additive for the distance $d$, namely such that
$d(c(t_1),c(t_3))=d(c(t_1),c(t_2))+d(c(t_2),c(t_3))$ 
for all $t_1\leq t_2\leq t_3$. 
Observe that $c$ may then be constant on some subset of its domain of definition, and needs not be parametrized at constant speed.
% monotone geodesics ?

\begin{Proposition}
  \label{prop- proj in dual tree preserves geods}
Let  $c\colon I\to \Sigmabt$ be a $\metb$-geodesic. 
Then $c$ is a minimizing curve for  $d_{\mLamb}$, and
the projection of  $c$ on the tree $T(\mLam)$
is a geodesic.
\end{Proposition}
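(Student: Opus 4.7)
The strategy is to exploit the CAT(0) nature of $\widetilde{\Sigmab}$ together with the fact that leaves of the lamination are complete geodesics, so as to show that a $\metb$-geodesic $c$ meets each leaf of $\Lamtb$ at most once transversally (or runs along a piece of it), and then translate this into both minimality and additivity.

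First, I would establish the following single-crossing lemma: if $c\colon I\to\widetilde{\Sigmab}$ is a $\metb$-geodesic and $\ell$ is a leaf of $\Lamtb$, then $c$ is either disjoint from $\ell$, meets $\ell$ transversally in a single point, or shares a sub-arc with $\ell$. Indeed, $\widetilde{\Sigmab}$ is CAT(0), so geodesics between two given points are unique; if $c$ met $\ell$ transversally at two distinct points $p,q$, then the $\metb$-geodesic subarc of $c$ from $p$ to $q$ would coincide with the corresponding subarc of $\ell$, contradicting transversality at $p$ and $q$. Moreover, since each leaf is a complete $\metb$-geodesic in a CAT(0) plane, it separates $\widetilde{\Sigmab}$ into two closed half-surfaces; denote by $\mathcal L(x,y)$ the set of leaves of $\Lamtb$ separating $x$ from $y$.

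Next, I would show that $i(\Lamtb, c) = \nub(\mathcal L(c(a),c(b)))$ for $I=[a,b]$, which by the definition of $d_{\mLamb}$ gives the minimizing property. The inequality $i(\Lamtb,c)\geq \nub(\mathcal L(c(a),c(b)))$ is automatic from the definition of the transverse measure and the fact that $c$ crosses each separating leaf at least once. The reverse inequality uses the single-crossing lemma together with non-atomicity of $\nub$: sub-arcs of $c$ that happen to run along a leaf of $\Lamtb$ contribute zero $\nub$-measure, and the remaining transverse crossings are in bijection with $\mathcal L(c(a),c(b))$. Any other path $c'$ from $c(a)$ to $c(b)$ must also cross every leaf in $\mathcal L(c(a),c(b))$, so $i(\Lamtb,c')\geq \nub(\mathcal L(c(a),c(b)))=i(\Lamtb,c)$, whence $c$ realises the infimum defining $d_{\mLamb}(c(a),c(b))$.

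Finally, the single-crossing property yields additivity: for any $t_1\leq t_2\leq t_3$ in $I$, the set $\mathcal L(c(t_1),c(t_3))$ decomposes as the disjoint union $\mathcal L(c(t_1),c(t_2))\sqcup \mathcal L(c(t_2),c(t_3))$, since a leaf met only once by $c$ separates $c(t_1)$ from $c(t_3)$ if and only if it separates $c(t_1)$ from $c(t_2)$ or $c(t_2)$ from $c(t_3)$, but not both. Taking $\nub$-measure and invoking the previous step gives
\[
d_{\mLamb}(c(t_1),c(t_3)) = d_{\mLamb}(c(t_1),c(t_2)) + d_{\mLamb}(c(t_2),c(t_3)),
\]
so the projection of $c$ to $T(\mLam)=\widetilde{\Sigmab}/d_{\mLamb}$ is a geodesic.

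The delicate step will be the second paragraph: rigorously identifying $i(\Lamtb, c)$ with $\nub(\mathcal L(c(a),c(b)))$ requires extending the definition of intersection from paths transverse to $\Lamtb$ to geodesics that may run along a leaf for some interval, and carefully using non-atomicity of $\nub$ (guaranteed by the blow-up) to discard the non-transverse contributions.
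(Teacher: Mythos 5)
Your proposal is correct and follows essentially the same route as the paper: the core observation in both is that since leaves of $\Lamtb$ are geodesics and $\Sigmabt$ is uniquely geodesic (CAT(0)), a $\metb$-geodesic crosses each leaf at most once, which yields both minimality of $i(\mLamb,c)$ and the no-backtracking of the projection. Your write-up merely makes explicit, via separating leaves and the additivity of $\nub$ over the disjoint decomposition $\mathcal L(c(t_1),c(t_3))=\mathcal L(c(t_1),c(t_2))\sqcup\mathcal L(c(t_2),c(t_3))$, the details that the paper's two-line proof leaves implicit.
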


\begin{proof}
% The associated projection $p\colon \Sigmabt\to T(\mLam)$ sends geodesics to geodesics.
Let $x,y$ be points of $\Sigmabt$ and  $c\colon [0,1]\to\Sigmabt$ be the constant
speed geodesic segment from
$x$ to $y$.
As the leafs of $\Lamtb$ are geodesics,
and CAT(0) spaces are uniquely geodesic,
% uniqueness of the geodesic between two point 
the path $c$ cannot cross twice the
same leaf of $\Lamtb$,
hence $i(\mLamb,c)$ is minimal and $d_{\mLamb}(x,y)=i(\mLamb,c)$.
Denote by $p\colon \Sigmabt\to T(\mLam)$ the canonical projection.
The  projection
$\ov{c}=p\circ c$ of $c$ in the dual tree $T(\mLam)$
does not backtrack, hence is a geodesic.
\end{proof}

The tree $T(\mLam)$ inherits an action of $\Gamma$ by isometries,
with associated length function
$$L_{T(\mLam)}=I_\mLam \;.$$
\compl{Comes from the fact that
$\ell_{T(\mLam)}(\gamma)=\ell_{(\Sigmabt,d_\mLamb)}(\gamma)=i(\mLamb,\gamma)=i(\mLam,\gamma)$.}

%%% Minimality
Note that when $\Sigma$ is closed the tree $T(\mLam)$
is then \emph{minimal} for the action of $\Gamma$,
namely there is no invariant proper subtree.
\compl{Beware not true for our lamination pieces !}%
%
%%% justification
% This follows from the fact that the action of $\Gamma$ on the CAT(0)
% surface $(\Sigmabt,\metb)$ is then minimal,
% in the sense there is no invariant closed convex subset.
Indeed  an invariant proper subtree  will lift
as a proper closed invariant convex subset in $\Sigmabt$,
and taking the closure in the CAT(0) compactification  we will then obtain a proper closed
invariant subset of the boundary at infinity $\partial_\infty\Sigmabt \simeq \partial\mathcal H^2$,
\compl{NB: uses that the cone from a point to $\partial_\infty\Sigmabt$ is
  $\Sigmabt$, not true when there is boundary}
which is impossible.

%%% Rigidity and Universality 
Note that the tree $T(\mLam)$ is essentially determined by
its length function ; more generally
we will use the length rigidity of actions on minimal trees with
length functions in $\ML$:
It is easily seen that the the action of $\Gamma$ on $T(\mLam)$ is
\emph{irreducible}, namely it has no global fixed point in $T\cup
\partial_\infty T$,
where
$\partial_\infty T$ denotes the boundary at infinity of $T$. 
This depens in fact only on the length function: the action is reducible if and only if its length function is of the form $\gamma \mapsto
|h(\gamma)|$, where $h\colon \Gamma\to \R$ is a homomorphism (see
\cite[Corollary 2.3]{CuMo87}).
In particular all minimal $\Gamma$-trees with length function
in $\ML$ are irreducible.
It follows then from the length rigidity for minimal irreducible
$\Gamma$-trees \cite[Theorem 3.2]{CuMo87}
that if $T$ and $T'$ are any two minimal $\Gamma$-trees
with the same length function in $\ML$,
then there is a unique  equivariant isometry $T\to T'$.

\subsection{The  $\R^n$-tree dual to a $n$-measured geodesic lamination.}\label{ss-Rntree}
\label{s-DualnTree}

Let $\n\in\N$.

\begin{Definition}
  A {\em $\R^n$-tree} is a $\R^n$-metric
  space  $(X, \vd=(d_i)_{i=1,\ldots,n})$ which is a $\R$-tree for the
  associated $\ell^1$-metric $d^1(x,y):=\sum_i d_i(x,y)$.
\end{Definition}
Observe that, while we only assume that  $d_i$ are pseudodistances, we require that their sum $d^1$ is a distance, namely it separates points. 
%%% topological
\begin{Definition}
A {\em $\n$-measured lamination} $\vmLam=(\Lam,\vnu)$ on
a surface
$\Sigma$ is   a lamination $\Lam$ on  $\Sigma$,
endowed with a $\n$-tuple
$\vnu=(\nu_1,\ldots, \nu_\n)$ of transverse measures of \emph{full joint support},
namely  $\Lam$ is the union of the supports $\Lam_i$ of
the measures $\nu_i$ for $i=1,\ldots,\n$.

It can equivalently be seen as a $\n$-tuple 
of {\em parallel} measured
laminations $(\mLam_1,\ldots, \mLam_\n)$,
that is, such that any two
leafs  of $\mLam_i$ and  $\mLam_j$ are either disjoint
or equal.
\end{Definition}

%%% geodesic n-lam
When $\Sigma$ is endowed with a locally CAT(0) metric,
A $\n$-measured geodesic lamination $\vmLam=(\Lam,\vnu)$
is called {\em geodesic} when $\Lam$ is geodesic.

%%% Interpretation of parallelism in terms of currents
When $\Sigma$ is a closed hyperbolic surface,
seeing measured geodesic laminations as
geodesic currents,
a $n$-tuple $(\mLam_1,\ldots, \mLam_\n)$
of measured geodesic laminations
is parallel if and only if
$$i(\mLam_i, \mLam_j)=0 \text{ for all } i,j$$
or equivalently that 
$$\mLam:=\sum_{i=1}^\n\mLam_i\text{ is a measured lamination.}$$ 
% Then  $\Lam=\supp(\mLam)=\cup_i\supp(\mLam_i)$, 
% and the correspondance is given by
%  $\mLam_i=(\Lam_i,\nu_i)$.

%%% Dual tree of a n-lamination
Let $\vmLam=(\Lam,\vnu)$ be a $\n$-measured lamination on
a closed hyperbolic surface $\Sigma$.
We now construct the associated dual $\R^\n$-tree,
adapting the construction of Section~\ref{s-DualTreeOfMLam}.
%\cite[\S 11.12]{Kap09}

We take the $\R$-tree $T(\mLam):=\Sigmabt/d_\mLamb$ dual  to the measured
lamination $\mLam=(\Lam,\nu)$ where $\nu=\sum_i\nu_i$.

Recall from Section~\ref{s-DualTreeOfMLam}
that  $(\Sigmab,\Lamb)$ is the blow-up of  $(\Sigma,\Lam)$,
which is a locally CAT(0) surface $(\Sigmab,\metb)$
homeomorphic to $\Sigma$,
with a geodesic  lamination $\Lamb$,
and that $\mLamb=(\Lamb,\nub)$ where $\nub$ is the non-atomic transverse measure on $\Lamb$
obtained by blowing up $\nu$,
and that $d_{\mLamb}$ the associated
continuous path pseudometric on $\Sigmabt$.

Taking the blow-ups $\nub_i$ of the transverse measure $\nu_i$, we
obtained a $n$-tuple $\vnub:=(\nub_i)_i$ of non-atomic transverse measures on
$\mLamb$ (not necessarily of full support),
namely a $n$-measured lamination $\vmLamb=(\Lamb,\vnub)$ on
$\Sigmab$, which we will call the {\em blow-up}  of $\vmLam=(\Lam,\vnu)$.
% Note that we have $\nub=\sum_i\nub_i$.
% 

Each of the measured laminations $\mLamb_i=(\Lamb,\nub_i)$ then
induces a continuous everywhere-defined
path pseudometric $d_{\mLamb_i}$ on $\widetilde{\Sigmab}$.

\begin{Definition} The {\em $\R^n$-tree $T(\vmLam)$ dual  to $\vmLam$} is defined as
the $\R$-tree $T(\mLam):=\Sigmab/d_\mLamb$ dual  to the measured
lamination $\mLam$,
endowed with the $\R^n$-pseudometric given by the $n$-tuple
\[\vd_\mLam:=(d_{\mLam_i})_{i=1,\dots,n}\]
of quotient pseudometrics $d_{\mLam_i}$ induced by $d_{\mLamb_i}$.
\end{Definition}
 It inherits an action of $\Gamma$ preserving $\vd_\mLam$, that
 is preserving each pseudometric $d_{\mLam_i}$.

 \begin{Proposition}\label{prop-isometry}
   $(T(\mLam),\vd_\mLam)$ is an $\R^n$-tree with associated
   $\ell^1$-metric $\lVert\vd_\mLam\rVert_1=d_\mLam$. 
 \end{Proposition}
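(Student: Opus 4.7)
The plan is to show that $\sum_{i=1}^n d_{\mLam_i} = d_\mLam$ on $T(\mLam)$, from which both statements follow: the sum being a metric makes $\vd_\mLam$ an $\R^n$-metric, and the fact that $T(\mLam)$ is an $\R$-tree for this sum (being the tree dual to $\mLam$) makes $(T(\mLam),\vd_\mLam)$ an $\R^n$-tree by definition.

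First I would establish the analogous identity on $\Sigmabt$, namely $d_\mLamb = \sum_i d_{\mLamb_i}$. Since $\nub=\sum_i \nub_i$ by construction, for every path $c$ in $\Sigmabt$ transverse to $\Lamb$ one has $i(\mLamb,c)=\sum_i i(\mLamb_i,c)$. Taking infima gives
\[
d_\mLamb(x,y)=\inf_c \sum_i i(\mLamb_i,c)\geq \sum_i \inf_c i(\mLamb_i,c)=\sum_i d_{\mLamb_i}(x,y).
\]
For the reverse inequality, I would exhibit a common minimizing path. Given $x,y\in\Sigmabt$, take the $\metb$-geodesic segment $c$ from $x$ to $y$. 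The argument of Proposition~\ref{prop- proj in dual tree preserves geods} applies simultaneously to all the sublaminations $\Lamb_i\subseteq \Lamb$: since leaves of $\Lamb_i$ are geodesics of the CAT(0) space $\Sigmabt$, the geodesic $c$ crosses each leaf of $\Lamb_i$ at most once, so $i(\mLamb_i,c)=d_{\mLamb_i}(x,y)$. Likewise $i(\mLamb,c)=d_\mLamb(x,y)$. Summing over $i$ then gives $d_\mLamb(x,y)=\sum_i d_{\mLamb_i}(x,y)$.

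Next I would pass to the quotient $T(\mLam)=\Sigmabt/d_\mLamb$. The identity just proved ensures that $d_\mLamb(x,y)=0$ forces $d_{\mLamb_i}(x,y)=0$ for each $i$; combined with the triangle inequality $|d_{\mLamb_i}(x,y)-d_{\mLamb_i}(x',y')|\leq d_\mLamb(x,x')+d_\mLamb(y,y')$, this shows that $d_{\mLamb_i}$ descends to a well-defined pseudometric $d_{\mLam_i}$ on $T(\mLam)$, and the summation identity persists: $\sum_i d_{\mLam_i}=d_\mLam$.

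Finally, since $(T(\mLam),d_\mLam)$ is an $\R$-tree by the construction of the dual tree recalled in \S\ref{s-DualTreeOfMLam}, and the $\ell^1$-metric associated to $\vd_\mLam$ is precisely $d_\mLam$, the pair $(T(\mLam),\vd_\mLam)$ satisfies both requirements: $d_\mLam$ is a metric so $\vd_\mLam$ is an $\R^n$-metric, and $T(\mLam)$ with this $\ell^1$-metric is an $\R$-tree. The only real obstacle is the identity $d_\mLamb=\sum_i d_{\mLamb_i}$, and its proof is essentially a direct reapplication of Proposition~\ref{prop- proj in dual tree preserves geods} to each sublamination simultaneously, exploiting the common $\metb$-geodesic as a joint minimizer.
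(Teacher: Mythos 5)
Your proposal is correct and follows essentially the same route as the paper: the heart of both arguments is that, by Proposition~\ref{prop- proj in dual tree preserves geods}, the $\metb$-geodesic simultaneously realizes the infima defining $d_{\mLamb}$ and each $d_{\mLamb_i}$, which yields $d_{\mLamb}=\sum_i d_{\mLamb_i}$ and hence, after descending to the quotient, $\lVert\vd_\mLam\rVert_1=d_\mLam$ and the separation of $\vd_\mLam$. You merely spell out in more detail the trivial inequality and the well-definedness of the quotient pseudometrics, which the paper leaves implicit.
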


 \begin{proof}
   % It is not clear that $d_\mLam=\sum_i\d_{\mLam_i}$
   The crucial fact is that, by Proposition~\ref{prop- proj in dual
     tree preserves geods},
   the infima involved
   in the definition of the pseudometrics $d_{\mLam_i}$
   are in fact all realized simultaneously for a same path $c$ in
   $\Sigmabt$, the $\metb$-geodesic.
In particular $c$ is a minimizing curve  for  $d_{\mLamb}$ and its on $T(\mLam)$
is a minimizing path for $\vd_\mLam$.
This  ensures that $d_{\mLam}=\sum_i d_{\mLam_i}$ on $T(\mLam)$.
%
% We now prove  that  in the universal
% cover $\Sigmabt$, the CAT(0) geodesics are minimizing curves for
% each $d_{\mLam_i}$, hence for $d_{\mLam}$  (cfr. Lemma~\ref{lem-L=L1+L2Flat}).

% \begin{Lemma}
% \label{lem-L=L1+L2Trees}
% % We endow the surface $\Sigmab$ with the locally CAT(0) metric
% % obtained by taking the flat
% % euclidean metric on strips $B(c)$ and the original hyperbolic metric
% % on the complement of the flat strips. 

% The following properties hold:
% \begin{enumerate}
% \item 
% \label{it:ProjGeodsTrees}
% Let  $c\colon I\to \Sigmabt$ be a $\metb$-geodesic. 
% Then $c$ is a minimizing curve for all $d_{\mLamb_i}$, and for 
% $d_{\mLamb}$.
% In particular the projection of  $c$ on $T(\mLam)$
% is a minimizing path for $\vd_\mLam$.

% \item
% \label{it:SumMetricTrees}
% $d_{\mLamb}=\sum_i d_{\mLamb_i}$ on $\Sigmabt$.

% \end{enumerate}
% \end{Lemma}
% \begin{proof}
% Let $x,y$ be point of $\Sigmabt$ and  $c$ be the geodesic segment from
% $x$ to $y$ for the CAT(0) metric lifting $\metb$ (again denoted $\metb$).
% As the leafs of the lift $\Lamtb$  of $\Lamb$ are geodesics,
% and CAT(0) spaces are uniquely geodesic,
% %by uniqueness of the geodesic between two point 
% the path $c$ cannot cross twice the
% same leaf of $\Lamb$, hence $d_{\mLamb_i}(x,y)=i(\mLamb_i,c)$ and $d_{\mLamb}(x,y)=i(\mLamb,c)$.
% % projects on a geodesic path in the $\R$-tree
% % $T(\mLam):=\Sigmabt/d_{\mLamb}$.
% %
% This proves (\ref{it:ProjGeodsTrees}) and
% (\ref{it:SumMetricTrees}) follows.
% \end{proof}

The separation of $\vd_\mLam$ on $T(\mLam)$ follows:
if $\vd_\mLam(x,y)=0$ then 
 $d_{\mLam}(x,y)=\sum_i d_{\mLam_i}(x,y)=0$, hence $x=y$.
\end{proof}

\begin{Remark}
Proposition~\ref{prop-isometry} is special for the $\ell^1$-metric, and doesn't work for other distances, not even locally. For example, consider a minimal lamination $\lambda$ that supports two mutually singular measured laminations. Then the distance function $\lVert\vd_\mLam\rVert_2$ is not geodesic, not even locally. 
\end{Remark}

\section{The boundary of $\CharMax(\Gamma,\PSL(2,\R)^n)$ is $\PP((\ML)^n)$}
\label{s-Boundary}

In this section we prove
Theorem~\ref{thmintro-IdBoundary}, that identifies $\bordC{\Teich^n}$ with  $ \PP((\ML)^n)$, and deduce some consequences for other length spectra compactifications. 

\subsection{Proof of Theorem~\ref{thmintro-IdBoundary}}
Theorem~\ref{thmintro-IdBoundary} is classical when $n=1$, and follows from the work of Thurston.
%%% Topological nature of P(ML) and P(ML^n)
Thurston furthermore proved \cite[Expose 8]{FLP} that $\ML$ is
homeomorphic to $\R^{6g-6}$,
 that it's projectivization $\PP\ML$ is homeomorphic to $\mathbb S^{6g-7}$, and that the resulting compactification is homeomorphic to a closed ball \cite[8.3.13]{Martelli}. 
 Using the latter fact we can prove the following lemma, which ensures that
 given $L$ in $\ML$, one  can choose a sequence in $\Teich$ 
 converging to $[L]$ with a fixed scale sequence. It will be crucial il the proof of Theorem~\ref{thmintro-IdBoundary}.
 
 \begin{Lemma}
 	\label{lem-FixedScaleSequence}
 	Let $L\in \ML$.
 	Let $(\lambda_k)$ be any increasing diverging sequence of positive real numbers.
 	Then there exists
 	a sequence of maximal representations $\rho_{k}\colon \Ga \to \PSL(2,\R)$
 	such that $\frac{1}{\lambda_k} L_{\rho_{k}}$
 	converges to $L$ in $(\R_+)^\Gamma$.
 \end{Lemma}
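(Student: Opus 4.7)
The case $L=0$ is immediate, by taking $\rho_k$ to be any fixed maximal representation: then $\frac{1}{\lambda_k}L_{\rho_k}\to 0$ pointwise since $\lambda_k\to\infty$ and $L_{\rho_k}$ is fixed. So assume $L\neq 0$ and fix $\gamma_0\in\Gamma$ with $L(\gamma_0)>0$. The plan is to realize a given scale $\lambda_k$ as the value $L_{\sigma(t_k)}(\gamma_0)/L(\gamma_0)$ along a continuous path in Teichm\"uller space ending at $[L]$, using the intermediate value theorem.

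First, by Thurston's theorem, $\compC{\Teich}$ is a closed ball with interior $\Teich$ and boundary $\PP\ML$. Pick a continuous path $\sigma\colon [0,1]\to \compC{\Teich}$ with $\sigma([0,1))\subset \Teich$ and $\sigma(1)=[L]$ (for instance a radial segment from a basepoint of the ball). The length function map $\Teich\to \R_+^\Gamma$ is continuous, so $t\mapsto L_{\sigma(t)}$ is continuous on $[0,1)$.

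Next, I claim that $L_{\sigma(t)}(\gamma_0)\to \infty$ as $t\to 1$. The projective convergence $[\sigma(t)]\to [L]$ in $\PP\R_+^\Gamma$ gives positive scalars $s(t)$ such that $L_{\sigma(t)}/s(t)\to L$ pointwise, so in particular $L_{\sigma(t)}(\gamma_0)\sim s(t)L(\gamma_0)$. If $s(t)$ stayed bounded along some subsequence $t_k$, then $L_{\sigma(t_k)}$ would converge pointwise to the bounded function $s_\infty L$; since the length function map is a topological embedding of $\Teich$ onto a closed subset of $\R_+^\Gamma$, this would force $\sigma(t_k)$ to converge in $\Teich$, contradicting $\sigma(t_k)\to[L]\in\PP\ML=\bordC\Teich$. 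Hence $s(t)\to\infty$ and $L_{\sigma(t)}(\gamma_0)\to\infty$.

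Since $t\mapsto L_{\sigma(t)}(\gamma_0)$ is continuous on $[0,1)$ with finite value at $t=0$ and limit $+\infty$ at $t=1$, the intermediate value theorem yields, for every sufficiently large $k$, some $t_k\in[0,1)$ with $L_{\sigma(t_k)}(\gamma_0)=\lambda_k L(\gamma_0)$; set $\rho_k:=\sigma(t_k)$ (choose $\rho_k$ arbitrarily for the finitely many small $k$ not covered). To verify $\frac{1}{\lambda_k}L_{\rho_k}\to L$, note that $L_{\rho_k}(\gamma_0)=\lambda_k L(\gamma_0)\to\infty$ forces $t_k\to 1$, so $\rho_k\to [L]$ projectively; writing $L_{\rho_k}/s_k\to L$ and evaluating at $\gamma_0$ gives $s_k/\lambda_k\to 1$, whence $\frac{1}{\lambda_k}L_{\rho_k}=\frac{s_k}{\lambda_k}\cdot\frac{L_{\rho_k}}{s_k}\to L$ pointwise. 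The main subtlety is the properness/closedness argument in the third paragraph, which is the only step that genuinely uses that $[L]$ lies on the Thurston boundary rather than in $\Teich$.
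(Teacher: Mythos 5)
Your proof is correct, but it reaches the conclusion by a genuinely different normalization than the paper, and the difference is worth recording. The paper normalizes along the path by the minimal displacement $\lambda_S(\rho)$ with respect to a finite generating set, whose continuity and properness on $\Teich$ drive the intermediate-value argument; the price is that the scale of the limiting length function must then be identified, which the paper does by passing to an asymptotic cone and invoking Culler--Morgan length rigidity for minimal irreducible actions on $\R$-trees to pin down the constant $s=1/D$. You instead normalize by the length of a single element $\gamma_0$ with $L(\gamma_0)>0$, which makes the scale identification at the end essentially trivial ($s_k/\lambda_k\to 1$ by evaluating at $\gamma_0$) and removes any need for asymptotic cones or tree rigidity; the burden is shifted onto showing that $t\mapsto L_{\sigma(t)}(\gamma_0)$ diverges as $t\to 1$. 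Your argument for that divergence --- pointwise bounded marked length spectra form precompact subsets of $\Teich$, i.e.\ the length-spectrum map is a proper embedding with closed image --- is a standard fact (it is exactly what forces Thurston's scale sequences to diverge), but it is doing real work here and deserves a citation or a one-line justification via compactness of sublevel sets of a filling system of curves. Two small points to tighten: when $s(t_k)$ is bounded you should pass to a further subsequence along which it converges to some $s_\infty\in[0,\infty)$, with the case $s_\infty=0$ excluded by the same precompactness argument; and the claim that $t_k\to 1$ needs the one-line observation that otherwise a subsequence of the $t_k$ would accumulate at some $t_*<1$, where $L_{\sigma(\cdot)}(\gamma_0)$ is finite and continuous, contradicting $L_{\sigma(t_k)}(\gamma_0)=\lambda_k L(\gamma_0)\to\infty$. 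Both your proof and the paper's rely identically on the closed-ball structure of $\compC{\Teich}$ to produce the path ending at $[L]$.
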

 
 \begin{proof}
 	Given a finite generating set $S\subset \Gamma$ and an isometric action $\rho$ of
 	$\Gamma$ on a metric space $X$, 
 	the {\em minimal displacement of $\rho$ with
 		respect to the generating set $S$} is defined by:
 	$$\lambda_S(\rho):=\inf_{x\in X}\sqrt{\sum_s d(x,\rho(s)x)^2}\;.$$
 	This defines a non zero proper and continuous
 	function
 	$\lambda_S \colon  \Teich \to \R_+$ 
 	(this is for example proven - in a much more general context -
 	in \cite[Prop. 25]{APrepcr}).
 	
 	We may suppose that $L$ is non-zero (otherwise we can take a constant
 	sequence $\rho_k$).
 	Let $T$ be a  $\R$-tree with minimal $\Gamma$-action, with length function $L$.
 	Let $D$ 
 	be the minimal displacement of $\Gamma$  on
 	$T$ with respect to the generating set $S$.
 	
 	Fix a point $[\rho_0]$ in the Teichm\"uller space $\Teich$.
 	As the compactification $\ov{\Teich}$ 
 	of $\Teich$ is a closed ball, % (see for example \cite{FLP}) ,
 	there exists a
 	path $r(t)$, $t\in[0,1]$ from $[\rho_0]$ to $[L]$ in $\ov{\Teich}$
 	such that $r(t)$ belongs to $\Teich$ for $t\in[0,1[$.
 	Let $\rho_t$ be a representation  with $[\rho_t]=r(t)$.
 	Then, as the map 
 	$$\begin{array}{ccc}
 		[0,1[&\to&\R_+\\
 		t&\mapsto &\lambda_S(r(t))
 	\end{array}$$
 	is continuous and diverges as $t$ goes to $1$, there exists an increasing sequence $(t_k)_{k\geq K}$ in $[0,1[$ with limit $1$ such that $\lambda_S(r(t_k))=D\lambda_k$.
 	Since $[\rho_k]=r(t_k)$ converges to $[L]$ in $\ov{\Teich}$,
 	we have that $\frac{1}{\lambda_S(\rho_k)} L_{\rho_k}$
 	converges to $sL$ for some $s\in\R_+$.
 	Taking the asymptotic cone of this sequence (see for example \cite{APcomp}), we also have that $sL$ is the length function
 	of an action of $\Gamma$ on a real tree $T_\omega$ with minimal
 	displacement $1$ with respect to $S$.\footnote{see proof of Thm 5.6 in \cite{APcomp}} 
 	Let $T'\subset T_\omega$ be the minimal invariant
 	subtree. 
 	As $T'$ is a convex subset of $T_\omega$, the minimal
 	displacement of $\Gamma$ in $T'$ is the same as in $T_\omega$.
 	By length rigidity of actions on minimal
 	irreducible trees 
 	the trees $T$ and $\frac{1}{s}T'$ are equivariantly isometric,
 	hence have same minimal displacement  $s=\frac{1}{D}$. 
 	So we have that $\frac{1}{\lambda_k}L_{\rho_k} \to L$ as wanted.
 \end{proof}

%The space $\PP((\ML)^n)$ is defined as the quotient space of $(\ML)^n-\{0\}$ by homotheties.

We now have the ingredients needed to prove Theorem~\ref{thmintro-IdBoundary}, which we recall for the reader's convenience:
\begin{Proposition}
	\label{prop-IdBoundaryII}
	The  boundary of $\CharMax(\Ga,\PSL(2,\R)^n)=\Teich^n$ in 
	the Weyl chamber length compactification is
	$\PP((\ML)^n)$.
\end{Proposition}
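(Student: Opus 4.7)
The plan is to establish the two inclusions $\bordC(\Teich^n) \subseteq \PP((\ML)^n)$ and $\PP((\ML)^n) \subseteq \bordC(\Teich^n)$ separately, combining Thurston's classical compactification of $\Teich$ by $\PP\ML$ with Lemma~\ref{lem-FixedScaleSequence}.

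For the forward inclusion, take a sequence $\rho_k=(\rho_{k,1},\ldots,\rho_{k,n})\in\Teich^n$ with $[\vL_{\rho_k}]\to[\vL]\in\bordC(\Teich^n)$, and pick scales $\lambda_k>0$ so that $\vL_{\rho_k}/\lambda_k\to\vL=(L_1,\ldots,L_n)$ pointwise on $\Gamma$. I first argue $\lambda_k\to\infty$: otherwise, along a subsequence $\lambda_k$ stays bounded, so every coordinate function $L_{\rho_{k,i}}$ remains pointwise bounded on $\Gamma$, which forces each $\rho_{k,i}$ into a compact subset of $\Teich$ (a divergent subsequence would, by Thurston's theorem, exhibit a nonzero projective limit in $\PP\ML$, contradicting pointwise boundedness). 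Passing to a further subsequence $\rho_{k,i}\to\rho_{\infty,i}\in\Teich$ and $\lambda_k\to\lambda>0$ (the case $\lambda=0$ is excluded because Fuchsian length functions cannot tend uniformly to zero on $\Gamma$ by the collar lemma), yielding $[\vL]=[\vL_{\rho_\infty}]\in\PCalvL(\Teich^n)$, a contradiction. Now, for each $i$ with $L_i\neq 0$, choose $\gamma_0\in\Gamma$ with $L_i(\gamma_0)>0$; since $L_{\rho_{k,i}}(\gamma_0)\sim\lambda_k L_i(\gamma_0)\to\infty$, the sequence $\rho_{k,i}$ diverges in $\Teich$, so by Thurston some subsequence satisfies $L_{\rho_{k,i}}/\mu_{k,i}\to\mu_i$ pointwise for $\mu_{k,i}>0$ and nonzero $\mu_i\in\ML$. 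Evaluating at $\gamma_0$ forces $\mu_{k,i}/\lambda_k\to L_i(\gamma_0)/\mu_i(\gamma_0)>0$, whence $L_i=\bigl(L_i(\gamma_0)/\mu_i(\gamma_0)\bigr)\mu_i\in\ML$. Indices with $L_i=0$ yield $L_i\in\ML$ directly, so $[\vL]\in\PP((\ML)^n)$.

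For the reverse inclusion, start with a nonzero $(L_1,\ldots,L_n)\in(\ML)^n$ and fix any diverging sequence $\lambda_k\to\infty$. For each $i$ with $L_i\neq 0$, apply Lemma~\ref{lem-FixedScaleSequence} to produce a sequence $\rho_{k,i}$ of Fuchsian representations with $L_{\rho_{k,i}}/\lambda_k\to L_i$ pointwise; for $i$ with $L_i=0$, take $\rho_{k,i}$ to be any fixed Fuchsian representation, so $L_{\rho_{k,i}}/\lambda_k\to 0=L_i$. Then $\rho_k=(\rho_{k,1},\ldots,\rho_{k,n})\in\Teich^n$ satisfies $\vL_{\rho_k}/\lambda_k\to(L_1,\ldots,L_n)$ pointwise, hence $[\vL_{\rho_k}]\to[(L_1,\ldots,L_n)]$. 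Since by Thurston no nonzero element of $\ML$ is projectively a Fuchsian length function, this limit is not in $\PCalvL(\Teich^n)$ and thus lies in $\bordC(\Teich^n)$.

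The main obstacle is synchronizing rescalings: the vectorial compactification uses a single scale $\lambda_k$ shared across all $n$ coordinates, whereas Thurston's compactification naturally provides one scale $\mu_{k,i}$ per factor. The forward inclusion resolves this by absorbing each ratio $\mu_{k,i}/\lambda_k$ into the scalar multiplying the limit lamination $\mu_i$, while the reverse inclusion relies crucially on Lemma~\ref{lem-FixedScaleSequence}, which lets one prescribe the common scale $\lambda_k$ in advance simultaneously for every non-vanishing component.
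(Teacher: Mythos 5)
Your proof is correct and follows essentially the same route as the paper: the forward inclusion applies Thurston's identification $\partial\Teich=\PP(\ML)$ factor by factor after reducing to a common diverging scale sequence, and the reverse inclusion invokes Lemma~\ref{lem-FixedScaleSequence} to synchronize the scales $\lambda_k$ across all $n$ components before taking the product representation. You supply slightly more detail than the paper (justifying $\lambda_k\to\infty$ and checking that the limit is not an interior point), which is welcome but does not change the argument.
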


\begin{proof}
It is easy to see that $\bordC(\Teich^n)\subset \PP((\ML)^n)$.
Let indeed $\rho_k=(\rho_{k,i})_{i=1,\ldots,n}$, $k\in\bN$, be a sequence 
in 
$\Hom(\Gamma,\PSL(2,\R)^n)$, which we identify with
$\Hom(\Gamma,\PSL(2,\R))^n$.
We write a nonzero function $\vL\colon \Gamma\to \R^n$ as
$\vL=(L_i)_{i=1,\ldots,n}$ with $L_i\colon \Gamma \to \R$.
The sequence of conjugacy classes $[\rho_k]$ converges to the
homothety class $[\vL]$ in 
the Weyl chamber length compactification if and only if there exists a 
sequence of positive real numbers $\lambda_k\to\infty$ ({\em scale
  sequence}) such that
the renormalized Weyl chamber length function 
$\frac{1}{\lambda_k}\vL_{\rho_k}$ 
converges to $\vL$, that is if 
$\frac{1}{\lambda_k}L_{\rho_{k,i}}$ 
converges to $L_i$ for all $i=1,\ldots,n$.
Then either $L_i=0$ or  $[\rho_{k,i}]$ converges to $[L_i]$ in
the length compactification $\ov{\Teich}$ of $\Teich$.
As $\bord\Teich=\PP(\ML)$, 
we have that each $L_i$ belongs to $\ML$, hence $[\vL]$ belongs to $\PP((\ML)^n)$.

%In order to show the converse, the key point is the following
%lemma, ensuring that
% we can control the sequence of scales
%given $L$ in $\ML$, one  can choose a sequence in $\Teich$ 
% converging to $[L]$ with a fixed scale sequence. 

The converse implication is a consequence of Lemma~\ref{lem-FixedScaleSequence}.
% \begin{proof}[End of the proof of Proposition~\ref{prop: Teich x Teich}]
Let $\vL=(L_i)_{i=1,\ldots,n}\in(\ML)^n$.
Using Lemma~\ref{lem-FixedScaleSequence} with scale sequence
$\lambda_k=k$,  we can construct for each $i=1,\ldots,n$
a sequence of maximal representations 
$\rho_{k,i}:\Ga \to \PSL(2,\R)$, $k\in\N$, 
whose renormalized length function $\frac{1}{k}L_{\rho_{k,i}}$ 
converges to the length function $L_i$ in $(\R_+)^\Gamma$ as $k\to
\infty$.
\compl{\Anne I reverted to converges to $L_i$ in $(\R_+)^\Gamma$ as
  ``converges to $x$ in $X$'' is more precise for me than
  ``converges to $x\in X$'' (means convergence in  $X$ as a
  topological space)}
We now consider for $k\in\N$ the product representation 
$\rho_k\colon =(\rho_{k,i})_{i=1,\ldots,n}\colon \Gamma \to \PSL(2,\R)^n$, 
which, being a product of maximal representations, is a maximal
representation.
Then
$\frac{1}{k}\vL_{\rho_k}=(\frac{1}{k}L_{\rho_{k,i}})_{i=1,\ldots,n}$
converges to $(L_i)_{i=1,\ldots,n}=\vL$ as $k\to \infty$. 
Hence $[\vL]$ belongs to $\bordC(\Teich^n)$.
\end{proof}

%
%%% Nature of P(ML^n)
As a result we deduce that $\PP((\ML)^n)\subset
\PP((\R_+^\Gamma)^n)$ is a sphere of dimension $n(6g-6)-1$, being
the topological join of  $n$ spheres:
\begin{Corollary}
		The  boundary of $\CharMax(\Ga,\PSL(2,\R)^n)=\Teich^n$ in 
	the Weyl chamber length compactification is
	homeomorphic to $\mathbb S^{n(6g-6)-1}$.
\end{Corollary}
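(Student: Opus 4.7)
The plan is to use Proposition~\ref{prop-IdBoundaryII} to rewrite the boundary as $\PP((\ML)^n)$, then combine Thurston's identification $\PP(\ML) \cong \mathbb S^{6g-7}$ with the fact that projectivizing a product of cones yields the topological join. The corollary will then follow from iterated application of the standard join formula for spheres.

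More concretely, first I would recall Thurston's theorem \cite[Expose 8]{FLP} that $\ML$ is homeomorphic to $\R^{6g-6}$ via the train-track (or shear) coordinates, and that these coordinates intertwine the $\R_{>0}$-scaling on $\ML$ with the usual scaling on $\R^{6g-6}$. Consequently, $\PP(\ML)$ (projectivization by positive scalars) is homeomorphic to $\mathbb S^{6g-7}$. Second, I would invoke the general fact that for cones $C_1,\dots,C_n$ (sitting in topological vector spaces and closed under positive scalar multiplication), the projectivization of the product is the topological join of the projectivizations,
\[
\PP(C_1\times\cdots\times C_n) \;\cong\; \PP(C_1) * \PP(C_2) * \cdots * \PP(C_n).
\]
The homeomorphism is realized by fixing any continuous positively homogeneous norm $N$ and sending the class $[(L_1,\dots,L_n)]$ to the point with ``barycentric'' weights $t_i = N(L_i)/\sum_j N(L_j)$ and factor-projections $[L_i]\in \PP(C_i)$ (with the convention that a vanishing $t_i$ collapses the corresponding factor, matching the definition of the join). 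Applied to $C_i = \ML$ this gives
\[
\PP((\ML)^n)\;\cong\; \underbrace{\PP(\ML)*\cdots * \PP(\ML)}_{n\text{ copies}}.
\]

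Third, I would conclude by iterating $\mathbb S^a * \mathbb S^b \cong \mathbb S^{a+b+1}$: the $n$-fold join of $\mathbb S^{6g-7}$ has dimension
\[
n(6g-7) + (n-1) \;=\; n(6g-6)-1,
\]
which gives the desired homeomorphism with $\mathbb S^{n(6g-6)-1}$.

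The only step that requires any care is verifying the join identification for projectivized cones; the subtlety is that a point of the join on which some $t_i$ vanishes must correspond to a class in $\PP((\ML)^n)$ in which the $i$-th coordinate is the zero measured lamination. This is a formal check once one fixes a continuous norm and uses that each $\ML$ is a cone with apex $0$; no deeper argument is needed, and all other steps are direct applications of results already stated.
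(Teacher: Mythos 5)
Your argument is correct and is exactly the route the paper takes: the paper deduces the corollary from Proposition~\ref{prop-IdBoundaryII} together with the one-line observation that $\PP((\ML)^n)$ is the topological join of $n$ copies of $\PP(\ML)\cong\mathbb S^{6g-7}$, hence a sphere of dimension $n(6g-7)+(n-1)=n(6g-6)-1$. You simply spell out the join identification for projectivized cones and the sphere-join formula in more detail than the paper does.
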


\subsection{Applications to other length compactifications}

As an application of Proposition~\ref{prop-IdBoundaryII} we can also understand various other compactifications of $\CharMax(\Ga,\PSL(2,\R)^n)$:

\begin{Corollary}\label{c-Wlength}
	For any $W$-invariant norm $N\colon \R^n\to \R_+^n$, the  boundary of
	$\CharMax(\Ga,\PSL(2,\R)^n)$ in 
	the  $N$-length compactification is
	the projectivization $\PP(N((\ML)^n))$ of the image $N((\ML)^n)$ of
	$(\ML)^n$ by the map sending $\vL\colon \Gamma\to \R^n$ to
	$N\circ \vL\colon \Gamma\to \R$.
\end{Corollary}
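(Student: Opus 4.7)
The plan is to exploit the natural continuous surjection $\pi\colon \compC{\Teich^n} \to \compN{\Teich^n}$ recalled at the end of Section~\ref{s-Comp}, which restricts to the identity on $\Teich^n$ and is the projectivization of the map $\vL \mapsto N\circ \vL$ on length functions. Once this is in place, the corollary reduces to identifying the boundary of $\compC{\Teich^n}$ — done in Proposition~\ref{prop-IdBoundaryII} — and pushing this identification forward by $\pi$.

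First, I would verify that the projectivized map $[\vL] \mapsto [N \circ \vL]$ is well-defined on all of $\PP((\R_+^n)^\Gamma)$. Since $N$ is a norm on $\R^n$, one has $N(v) > 0$ for every non-zero $v \in \R^n$; hence if $\vL\colon \Gamma \to \R_+^n$ is non-zero, there exists $\gamma_0 \in \Gamma$ with $\vL(\gamma_0) \neq 0$, and then $N(\vL(\gamma_0)) > 0$, so $N \circ \vL$ is non-zero and its projective class is defined.

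Next, I would prove the topological fact that $\pi$ restricts to a surjection $\bordC(\Teich^n) \to \bordN(\Teich^n)$. For the inclusion $\pi(\bordC(\Teich^n)) \subset \bordN(\Teich^n)$: given $[\vL] \in \bordC(\Teich^n)$, choose a sequence $[\rho_k] \in \Teich^n$ with $[\rho_k] \to [\vL]$ in $\compC{\Teich^n}$; since $\Teich^n$ is open in $\compC{\Teich^n}$, this sequence does not converge inside $\Teich^n$; applying the continuous map $\pi$ (which is the identity on $\Teich^n$) gives $[\rho_k] \to \pi([\vL])$ in $\compN{\Teich^n}$, and since $\Teich^n$ is also open in $\compN{\Teich^n}$, the limit $\pi([\vL])$ cannot lie in $\Teich^n$. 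The reverse inclusion follows from surjectivity of $\pi$: any $y \in \bordN(\Teich^n)$ is $\pi(z)$ for some $z \in \compC{\Teich^n}$, and $z$ cannot lie in $\Teich^n$ (otherwise $y = \pi(z) = z \in \Teich^n$), so $z \in \bordC(\Teich^n)$.

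Finally, combining these two steps with Proposition~\ref{prop-IdBoundaryII} I would conclude
$$\bordN(\Teich^n) \;=\; \pi\bigl(\bordC(\Teich^n)\bigr) \;=\; \pi\bigl(\PP((\ML)^n)\bigr) \;=\; \PP\bigl(N((\ML)^n)\bigr),$$
which is the content of the corollary. There is no substantial obstacle: the statement is essentially a formal consequence of Proposition~\ref{prop-IdBoundaryII} transported along $\pi$, and only the general topological argument that $\pi$ maps boundary surjectively onto boundary requires explicit verification.
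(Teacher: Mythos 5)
Your overall strategy --- push the identification $\bordC(\Teich^n)=\PP((\ML)^n)$ of Proposition~\ref{prop-IdBoundaryII} forward along the continuous surjection $\pi\colon\compC{\Teich^n}\to\compN{\Teich^n}$ of \S\ref{s-Comp} --- is exactly the route the paper intends (it states the corollary with no further argument), and one half of your argument is correct and purely formal: since $\pi$ is surjective and restricts to the identity on $\Teich^n$, every point of $\compN{\Teich^n}$ outside the image of $\Teich^n$ is $\pi(z)$ for some $z\in\bordC(\Teich^n)$, which gives the inclusion of the $N$-boundary into $\PP(N((\ML)^n))$.

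The gap is in the reverse inclusion, which amounts to showing that $\PP(N((\ML)^n))$ is disjoint from the image of $\Teich^n$ in $\compN{\Teich^n}$, i.e.\ that $N\circ\vL$ for a non-zero $\vL\in(\ML)^n$ is never proportional to $N\circ\vL_\sigma$ for $\sigma\in\Teich^n$. You derive this from the assertion that $\Teich^n$ is open in $\compN{\Teich^n}$, but (i) that assertion is nowhere established and is essentially equivalent to the closedness of the $N$-boundary, i.e.\ to the very statement at issue (note that the analogous openness in $\compC{\Teich^n}$ \emph{is} available, because there the boundary is the compact set $\PP((\ML)^n)$ --- but that is an output of Proposition~\ref{prop-IdBoundaryII}, not a formality); and (ii) even granting openness, your deduction is off: a sequence contained in an open set may perfectly well converge to a point of that set, so ``$\Teich^n$ open'' does not by itself force $\pi([\vL])\notin\Teich^n$. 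What you actually need is that a sequence $[\rho_k]$ which diverges in $\Teich^n$ cannot have its $N$-length functions converge projectively to those of an interior point, and this is a genuine geometric fact, not point-set topology. Be aware that cheap invariants do not settle it: the joint systole does not distinguish the two sides, since $(\ML)^n_{>0}$ is non-empty and $\|\vL\|_1$ can be comparable to a word metric. For $N=\|\cdot\|_1$ one can argue via geodesic currents: $\|\vL\|_1=i(\sum_i\lambda_i,\cdot)$ and $\|\vL_\sigma\|_1=i(\sum_i\mathcal{L}_{\sigma_i},\cdot)$ with $\mathcal{L}_{\sigma_i}$ the Liouville currents, so proportionality of the length functions forces proportionality of the currents by marked length spectrum rigidity, which is impossible since a sum of Liouville currents has full support while $\sum_i\lambda_i$ is supported on a geodesic lamination. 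For a general $W$-invariant norm $N$ the function $N\circ\vL$ is not an intersection function, and a separate argument is required; your proof as written does not supply one.
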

	In particular, the  boundary of $\Teich^n$ in 
	the $\ell^1$-length compactification is
	the projectivization $\PP(\sum_{i=1}^n\ML)$ of the space of geodesic currents
	that can be decomposed as the sum of $n$ measured laminations.

\begin{Remark}
While the map $(\ML)^n\to (\sum_{i=1}^n\ML)$ has fibers of cardinality 2 on the set of geodesic currents with positive systole, the fiber over minimal measured laminations (and more generally over geodesic currents that admit a  Bonahon-orthogonal decomposition $\mu=\mu_1+\mu_2$ with $\mu_1$ a minimal measured lamination) is higher dimensional. As a result it is, in general, not clear how to determine the topology of $\PP(\sum_{i=1}^n\ML)$. Indeed it follows from the arguments in \cite[Theorem 2]{DLR} that no finite set of simple closed curves is sufficient to separate points in $\PP(\sum_{i=1}^n\ML)$. This is in strong contrast to $\PP((\ML)^n)$: there is a collection of $9g-9$ simple closed curves on $\Sigma$ whose $\R^n$-length function already separate points in $\PP((\ML)^n)$. 
\end{Remark}

%%% special to $n=2
% We now precise definitions and basic properties  of flat structures.
\section{$\R^2$-mixed structures}\label{sec:mix}
The purpose of this section is to prove Theorem~\ref{thmIntro1} from the introduction, which interprets points in $\ML\times\ML$ as length functions of mixed structures. For this we introduce the notion of flat structures in \S\ref{s-Flat},  mixed structures in \S\ref{s-Mix}, and their associated length functions in \S\ref{s-LengthofMix} and prove the result in \S\ref{s-MixAndML2}.
\subsection{Flat structures}
\label{s-Flat}
We will consider flat structures on a punctured finite type surfaces, namely the complement of a finite set of marked points considered as punctures in a compact topological surfaces. When dealing with mixed structures, the finite type surfaces will typically be obtained from geodesic subsurfaces of the original surface $\Sigma$ by collapsing each boundary components to a cusp point.
%\subsection{Definition}
\begin{Definition}
A \emph{ half-translation structure} $\K$ on a finite type surface $\Sigma$ 
is 
% an atlas of charts in
% $\R^2$ with affine transition maps with linear part ${\pm \id}$ 
a $(\R^2\rtimes\bZ/2\bZ,\R^2)$-structure   
on $\Sigma$, with conical singularities of angle $k\pi$, $k\geq 2$,
extendible at punctures, with possibly angle $\pi$ singularities
at the punctures.
\end{Definition}
With a slight abuse of notation we denote by $\Flat(\Sigma)$ the moduli space of half-translation structures on $\Sigma$, where two such structures are identified if they are isotopic.
Note that the flat structures we consider here  are \emph{directed} (that is, with a preferred
vertical direction).
\begin{Remark}\label{DLR-Flat}
Duchin-Leiniger-Rafi \cite{DLR}, as well as Ouyang-Tamburelli \cite{OT1} consider, instead, the space of flat structures on $\Sigma$, where they identify isometric marked structures. As a result we have fibrations
$$\Flat(\Sigma)\to\FlatDLR(\Sigma)\to\Teich(\Sigma).$$
The fiber of $\Flat(\Sigma)\to\FlatDLR(\Sigma)$ is the circle $\mathbb S^1$, which acts on a half-translation surface by rotation. Since our structures are marked, and any half-translation structure induces a conformal structure on the surface, the space $\Flat(\Sigma)$ fibers over the Teichm\"uller space.  We will never need this fact, but it is well known that the fiber over $X$ in this fibration identifies with the space of holomorphic quadratic differentials over the Riemann surface $X$.
\end{Remark}

%\subsection{$\R^2$- length functions}
%%% Pair of measured foliations
Let $\K$ be a flat structure on $\Sigma$, and let
 $\Fol_1$, $\Fol_2$
be the vertical and horizontal  measured foliations of $\K$.
This is a pair of transverse measured foliations.
%%% Pair of pseudometrics
%%% measure of a path
For $i=1,2$, 
we denote by 
$$\ell_{\K,i}(c)=i(\Fol_i,c)=\int_c\abs{dx_i}$$
and  by $d_{\K,i}:=d_{\Fol_i}$ the associated pseudometric on $\Sigmat$
This defines the natural $\R^2$-metric
\[\vd_\K=(d_{\K,i})_i\]
on $\Sigmat$.
%
%Pair of length functions
We denote $L_{\K,i}:=I_{\Fol_i}\colon \Gamma\to\R$ the corresponding length
function. 
%
%\subsubsection{The pair of dual trees}
Let $T(\Fol_i):=\Sigmat/d_{\K,i}$ be the dual tree 
of $\Fol_i$, and $p_i\colon \Sigmat\to T(\Fol_i)$ the corresponding projection.

%%% Completion
The universal cover $\Sigmat$ of $\Sigma$ is a CAT(0) metric space for the 
flat metric  associated to $\K$, which we will denote by
$d_{\CAT(0)}$.
Note that $d_{\CAT(0)}$ is not in general equal to  the
metric $\|\vd_\K\|_2$ induced from the $\R^2$-metric $\vd$ by taking
the $\ell^2$-norm, nevertheless it is the associated  length metric.
We denote $\Sigmatc$ the completion of $\Sigmat$ with respect to
$d_{\CAT(0)}$,
which consists in adding one fixed
point $x_{\ct}$ for each parabolic subgroup 
$\Gamma_{\ct}$ of $\Gamma=\pi_1(\Sigma)$
(corresponding to lifts $\ct$ of punctures $c$ of $\Sigma$),
see for example \cite[Lemma 7.2]{Mor}. 
The flat structure extends on $\Sigmatc$.
It is easy to see that $\Fol_i$, $d_{\K,i}$  and $p_i$ extend naturally 
to the completion $\Sigmatc$ of $\Sigmat$.
%
%

%\subsection{The $\ell^1$-metric}
%%% Metrics and systole
% In the following, the \emph{length function} $L_\K$ 
% of a flat structure $\K$ 
% will be the translation length computed using 
We define the {\em $\ell^1$-length metric}
  on  $\Sigmat$, 
as the Finsler metric $d_\K$ induced by the
$\ell^1$-norm $\norm{x}_1=\abs{x_1}+\abs{x_2}$  on $\R^2$.
This metric is clearly equivalent
to the CAT(0) metric $d_{CAT(0)}$, in particular
extends to $\Sigmatc$.
We denote by
\[\L_\K(\gamma):=L_{d_\K}(\gamma)\]
the corresponding length of $\gamma\in\Gamma$.
%
% In particular parabolic elements have zero length. 
% that is $L_\K(\partial\Sigma)=0$.
%
% We  also define the {\em systole} of $\K$ as
% %
% $$\Syst(\K)
% =\inf\{L_\K(\gamma), \gamma\text{ hyperbolic element of }\Gamma\}.$$
%
%%% The basic lemma
We now establish some basic properties of the $\ell^1$-metric
that we will need.
% compare. 
%
%% Done before
Recall that in the following lemma, a geodesic is an additive  
 path $t\mapsto c(t)$ for the distance $d$.
% , namely such that
% $d(c(t_1),c(t_3))=d(c(t_1),c(t_2))+d(c(t_2),c(t_3))$ 
% for all $t_1\leq t_2\leq t_3$. 
% Observe that $c$ may then be constant on some subset of its domain of definition, and needs not be parametrized at constant speed.
% monotone geodesics ?

\begin{Lemma}
\label{lem-L=L1+L2Flat}
Let $\K$ be a flat structure on a finite type surface $\Sigma$. 
The following properties hold:
\begin{enumerate}
\item 
\label{it:ProjGeodsFlat}
For any CAT(0) geodesic $c\colon I\to \Sigmatc$, the projections
$c_i=p_i\circ c$ of $c$ in the dual trees $T(\Fol_i)$ are geodesics; 

\item
\label{it:d^1=d_1+d_2Flat}
We have $d_\K=d_{\K,1}+d_{\K,2}=\|\vd_\K\|_1$ on $\Sigmatc$.
In particular, the  CAT(0) geodesics are geodesics for $d_\K$;

\item 
\label{it:L=L_1+L_2Flat}
We have $L_\K(\gamma)=L_{\K,1}(\gamma)+L_{\K,2}(\gamma)$ for all $\gamma\in\Gamma$.
\end{enumerate}
% CAT(0) length is $$L(\gamma)=\sqrt{L_1(\gamma)^2+L_2(\gamma)^2}$$ ??
\end{Lemma}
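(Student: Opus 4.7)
The plan is to treat the three parts in order, with part~\eqref{it:ProjGeodsFlat} doing the main geometric work and the other two following by formal manipulations. For part~\eqref{it:ProjGeodsFlat} I will mimic the strategy of Proposition~\ref{prop- proj in dual tree preserves geods}. The key observation is that each leaf of $\Fol_i$ is a vertical (resp.\ horizontal) straight line in the flat charts and thus extends to a complete CAT(0) geodesic in $\Sigmatc$; since $\Sigmatc$ is a complete simply connected CAT(0) space, it is uniquely geodesic. If a CAT(0) geodesic $c$ meets a leaf $\ell$ at two parameters $t_1<t_2$, then $c|_{[t_1,t_2]}$ and the segment of $\ell$ between $c(t_1)$ and $c(t_2)$ are two CAT(0) geodesics with the same endpoints and so must coincide. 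Thus $c$ meets each leaf in a connected subinterval, which means $\int_c|dx_i|$ already realizes the infimum defining $d_{\K,i}(c(a),c(b))$. In particular $c$ is a minimizing curve for each pseudometric $d_{\K,i}$, so its projection $p_i\circ c$ does not backtrack in $T(\Fol_i)$ and is therefore a geodesic there.

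For part~\eqref{it:d^1=d_1+d_2Flat}, given $x,y\in\Sigmatc$, join them by a CAT(0) geodesic $c$. Using part~\eqref{it:ProjGeodsFlat},
\[
d_\K(x,y)\leq \int_c\bigl(|dx_1|+|dx_2|\bigr)=\int_c|dx_1|+\int_c|dx_2|=d_{\K,1}(x,y)+d_{\K,2}(x,y).
\]
Conversely, any rectifiable path $\sigma$ from $x$ to $y$ satisfies $\int_\sigma|dx_i|\geq d_{\K,i}(x,y)$, so summing over $i$ and taking the infimum over $\sigma$ yields the reverse inequality. The same computation shows that the CAT(0) geodesic realizes $d_\K(x,y)$ and is therefore a $d_\K$-geodesic.

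For part~\eqref{it:L=L_1+L_2Flat}, the inequality $L_\K(\gamma)\geq L_{\K,1}(\gamma)+L_{\K,2}(\gamma)$ follows formally from part~\eqref{it:d^1=d_1+d_2Flat} since the infimum of a sum always dominates the sum of infima. For the reverse, I would treat peripheral and non-peripheral elements separately: if $\gamma$ is peripheral all three translation lengths vanish, as tight loops around the corresponding cusp make every relevant integral arbitrarily small. If $\gamma$ is non-peripheral it admits a CAT(0)-axis $A_\gamma\subset\Sigmatc$; along $A_\gamma$ the $\gamma$-displacement is constant and equals $\int|dx_1|+\int|dx_2|$ over a fundamental segment, and each individual integral equals $L_{\K,i}(\gamma)$ because the projection $p_i(A_\gamma)\subset T(\Fol_i)$ is $\gamma$-invariant and contains a $\gamma$-axis in the tree.

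I anticipate the main obstacle to be the unique-geodesic argument underlying part~\eqref{it:ProjGeodsFlat}: namely, verifying that $\Sigmatc$ is CAT(0) at the added completion points as well, so that unique geodesicity applies, and that leaves of $\Fol_i$ genuinely extend to complete geodesics in this enlarged space. The framework developed in Morzadec's thesis \cite{Mor}, cited immediately before the lemma, should supply precisely what is needed.
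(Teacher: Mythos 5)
Your proposal is correct and follows essentially the same route as the paper's proof: part~(1) via the fact that leaves of $\Fol_i$ are CAT(0) geodesics together with unique geodesicity in $\Sigmatc$ (so projections to the dual trees do not backtrack), part~(2) by comparing $\ell^1$-lengths of paths with the CAT(0) geodesic, and part~(3) via the existence of a $\gamma$-axis whose projections are axes in the trees. The completion point you flag is handled exactly as you anticipate, by citing Morzadec for the structure of $\Sigmatc$.
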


\begin{proof}
We prove (\ref{it:ProjGeodsFlat}). Let $i\in\{1,2\}$.
As $T(\Fol_i)$ is a tree, we only need to prove that the path $c_i$ does not
backtrack.
Let $t_1\leq t_2\leq t_3$ be real parameters in $I$ 
such that $c_i(t_1)=c_i(t_3)$. Then the points
$c(t_1)$ and $c(t_3)$ are in a common leaf of the foliation $\Fol_i$.  
As the leafs
of the foliation are geodesic for the CAT(0) metric $d$ on $\Sigmat$, we
can deduce, by uniqueness of geodesics in CAT(0) metric spaces, 
that the point $c(t_2)$ is
on the same leaf, hence that $c_i(t_1)=c_i(t_2)=c_i(t_3)$. This proves
that $c_i$ is  a geodesic in the tree $T(\Fol_i)$.
We now prove (\ref{it:d^1=d_1+d_2Flat}).
The $\ell^1$-length of a path $c$ is, by definition,
$\ell^1(c)=\int_c\abs{dx_1}+\int_c\abs{dx_2}
=\ell_1(c)+\ell_2(c)$. 
We then clearly have that
% for all paths $c$ from $x$ to $y$ that
% $l^1(c)\geq d_1(x,y)+d_2(x,y)$, 
% so 
$d(x,y)\geq d_1(x,y)+d_2(x,y)$.
On the other hand, if $c$ is the CAT(0) geodesic in $\Sigmatc$ from $x$ to $y$, we know by (\ref{it:d^1=d_1+d_2Flat})
that both projections $c_i=p_i\circ c$ of $c$ are
geodesic. So for $i=1,2$ we have 
$d_i(x,y)=d(p_i(x), p_i(y))=l(c_i)=\ell_i(c)$ and hence
 $\ell(c) = d_1(x,y)+d_2(x,y)$.

We finally prove (\ref{it:L=L_1+L_2Flat}).
As $d=d_1+d_2$ we clearly have 
$L_\K(\gamma)\geq L_{\K,1}(\gamma)+L_{\K,2}(\gamma)$.
If $L_\K(\gamma)=0$ then we are done.
Otherwise $\gamma$ has no fixed point in $\Sigmatc$ and, 
since $\Sigmatc/\Gamma$ is compact,
there is a CAT(0) geodesic $c$ in $\Sigmatc$ translated by $\gamma$.
By (\ref{it:d^1=d_1+d_2Flat}) both projections $c_i=p_i\circ c$ of $c$ are
geodesics translated by $\Gamma$. 
So, for any $x$ on $c$, and $i=1,2$
$$d_i(x,\gamma x)=d_i(p_i(x), \gamma p_i(x))=L_{ T(\Fol_i)}(\gamma)=L_{\K,i}(\gamma).$$
Then $d(x,\gamma x)=L_{\K,1}(\gamma)+L_{\K,2}(\gamma)$.
\end{proof}

\subsection{$\R^2$-mixed structures on a surface}
\label{s-Mix}
In this section we introduce a natural notion of $\R^2$-mixed structure on a
surface. This generalizes flat structures, and refines the notion of
mixed structure introduced by Duchin-Leininger-Rafi and Morzadec 
\cite{DLR, Mor}. The definition follows the point of view of
\cite{DLR}, see Section~\ref{s-DualTreeGradedSpace} for the
metric viewpoint analoguous to \cite{Mor}. 
%

%\begin{definition}[Weyl chamber mixed structures]
%\subsection{Definition}
\begin{Definition}
A  {\em $\R^2$-mixed structure} on a compact hyperbolic surface
$\Sigma$ with boundary
is a triple $\M=(\subS,\K,\vmLam)$
where 
\begin{itemize}
\item $\subS$ is  a (possibly disconnected, possibly empty) 
  open geodesic subsurface of $\Sigma$, namely
a union of complementary components of a collection of disjoint simple
closed geodesics.
% old : the interior of a subsurface with geodesic boundary.
% pb : this does not include the complement of a single closed geodesic
%
\item $\K$ is a flat structure  on $\subS$ extendible at punctures (when
$\subS$ is compactified as a punctured surface),
\item 
$\vmLam=(\Lam, \vnu)$ is a {\em $2$-measured geodesic lamination} on
$\Sigma-\subS$, that is 
$\Lam$ is a geodesic lamination on  $\Sigma$, included in $\Sigma-\subS$, and 
$\vnu=(\nu_1,\nu_2)$ is a  pair of transverse measures on $\Lam$
% (equivalently, a transverse measure with value in $\R_+^2$) 
of full
support, that is  $\Lam=\supp(\nu_1)\cup\supp(\nu_2)$.\footnote{We allow the supports of $\nu_i$ to overlap, but don't require that one of the $\nu_i$ is fully supported}
\end{itemize}
\end{Definition}
%\end{definition}
This imposes topological restrictions on the subsurface $\subS$:
no connected component of $\subS$ can be a pair of pants, since a pair of pants doesn't support any non-trivial flat structure.

\begin{Example}\label{ex:mix}
We illustrate an example of an $\R^2$-mixed structure on a surface of
genus $3$ in Figure~\ref{fig:mix}. In this case the support of the
lamination $\Lam$ consists of the three colored curves $c_1$, $c_2$,
$c_3$,
each with the given pair of weights $(x_i,y_i)\in\R_+^2-\{(0,0)\}$.
Observe that in this example $\supp(\nu_1)=\{c_1,c_2\}$ and $\supp(\nu_2)=\{c_2,c_3\}$. 
The subsurface $\subS$ is the disjoint union of a thrice punctured torus $\subS_1$, bounded by the curve $c_0$ and the curve $c_1$, and a twice punctured torus $\subS_2$, endowed with the flat structures $\K_1, \K_2$ illustrated above in the picture. In the flat pictures the parallel sides are identified, and the colored points correspond to punctures (corresponding to the curves, in $\Sigma$, of the same color). In particular  the two black punctures $p_0,p_0'$ in $\K_1$  come from the same curve. Observe that, while $c_0$ is a boundary component of $\subS$, it doesn't belong to the lamination $\Lam$, since the curve is not contained in the support of neither transverse measure.
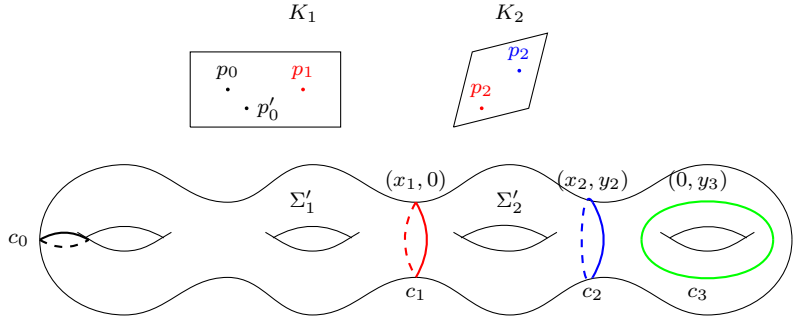
\begin{figure}[h]
\begin{tikzpicture}[scale=.5]
		%\draw[help lines] (0,-3) grid (15,3);
		%\draw[fill] (0,-0) circle [radius=0.05];
		%surface
		\draw(-5,0) [out=90, in=180] to (-2.75,2)[out=0, in=180] to (0,1) to (2.25,2)[out=0, in=180] to (5,1) to (7.5,2) to (10,1) to (12.75, 2) to [out=0, in=90](15,0);
		\draw (-5,-0)  [out=-90, in=180]to (-2.75,-2) [out=0, in=180]to (0,-1) to (2.25,-2) to (5,-1) to (7.5,-2) to (10,-1) to (12.75,-2) to [out=0, in=-90](15,0);
		%holes 
			\draw (-4,0.2) [out=-30, in=180]to (-2.75,-.3)[out=0, in=210] to (-1.5,0.2);
		\draw (-3.8,0.05) [out=30, in=150] to (-1.7,0.05);
		
		\draw (1,0.2) [out=-30, in=180]to (2.25,-.3)[out=0, in=210] to (3.5,0.2);
		\draw (1.2,0.05) [out=30, in=150] to (3.3,0.05);
		
		\draw (6.,0.2) [out=-30, in=180]to (7.5,-.3)[out=0, in=210] to (8.75,0.2);
		\draw (6.2,0.05) [out=30, in=150]to  (8.5,0.05);
		
		\draw (11.5,0.2) [out=-30, in=180]to (12.75,-.3)[out=0, in=210] to (14,0.2);
		\draw (11.75,0.05) [out=30, in=150] to (13.8,0.05);
		
		%curves in the lamination
		
		\draw (-5,0) [thick, black, out=30,in=150]to (-3.7,0);
        \draw (-5,0) [thick,black, dashed, out=-30,in=-150]to (-3.7,0);
        \node at (-5,0) [left] {\tiny $c_0$};
		
		\draw (5,1) [thick, red, out=-60,in=60]to (5,-1);
		\draw (5,1) [thick,red, dashed, out=-120,in=120]to (5,-1);
		\node at (5,1) [above ] {\tiny $(x_1,0)$};
		\node at (5,-1) [below] {\tiny $c_1$};
				
		\draw (9.7,1) [ blue,thick, out=-60, in=60] to (9.7,-1);
		\draw (9.7,1) [ blue,thick,dashed, out=120, in=-120] to (9.7,-1);
		\node at (9.7,1) [above] {\tiny $(x_2,y_2)$};
		\node at (9.7,-1) [below] {\tiny $c_2$};
		
		\draw(11,0) [ green,thick, out=90, in=90] to (14.5,0);
    	\draw(11,0) [ green,thick, out=-90, in=-90] to (14.5,0);
	    \node at (12.5,1) [above ] {\tiny $(0,y_3)$};
	    \node at (12.5,-1) [below] {\tiny $c_3$};
	%subsurface
	\node at (2,1)  {\tiny $\subS_1$};

	\node at (7.5,1)  {\tiny $\subS_2$};
	
	%corresponding flat structures
	\node at (2,5.5) [above] {\tiny$\K_1$};
	\draw (-1,3) to (3,3) to (3,5) to (-1,5) to (-1,3);
	\filldraw [red] (2,4) circle [ radius=1pt]   node [above] {\tiny $p_1$}; 
	\filldraw [black] (0,4) circle [ radius=1pt]   node [above] {\tiny $p_0$}; 
	\filldraw [black] (0.5,3.5) circle [ radius=1pt]   node [right] {\tiny $p_0'$};

		\node at (7.5,5.5) [above] {\tiny$\K_2$};
	\draw (6,3) to (8,3.5) to (8.5,5.5) to (6.5,5) to (6,3);
\filldraw [blue] (7.75,4.5) circle [ radius=1pt]   node [above] {\tiny $p_2$}; 
\filldraw [red] (6.75,3.5) circle [ radius=1pt]   node [above] {\tiny $p_2$}; 

	% further curves
			%	\draw (5,1) [ out=-60,in=60]to (5,-1);
		%	\draw (5,1) [ dashed, out=-120,in=120]to (5,-1);

		%\draw (12.75, 2) [ out=-70, in=70]to (12.75, .33);
		%\draw (12.75, 2) [ dashed, out=-110, in=110]to (12.75, .33);

	%	\draw (12.75, 2) [thick, red, out=-70, in=70]to (12.75, .33);
%	\draw (12.75, 2) [thick, red, dashed, out=-110, in=110]to (12.75, .33);

	%	\draw(.5,0) [ red, thick, out=90, in=90] to (4,0);
	%	\draw(.5,0) [ red,thick,out=-90, in=-90] to (4,0);
		
		%	\draw(5.6,0) [ red,thick,out=90, in=200] to (9.5,1.1);
		%	\draw(5.6,0) [ red,thick,out=-90, in=-200] to (9.5,-1.1);
		
			%	\draw (0.5,0) [ green,thick, out=90, in=180] to (2.25,1)[out=0, in=180] to (5,.5) to (7.5,1) [green, out=0, in=90] to (9.5,0) ;
		%	\draw (0.5,0) [ green,thick, out=-90, in=180] to (2.25,-1)[ green, out=0, in=180] to (5,-.5) to (7.5,-1) [ green, out=0, in=-90] to (9.5,0) ;
		
\end{tikzpicture}
\caption{An $\R^2$-mixed structure on a surface of genus 4.}\label{fig:mix}
\end{figure}

\end{Example}
We denote by $\Mix(\Sigma)$ the moduli space of $\R^2$-mixed structures, where two $\R^2$-mixed structures are identified if the subsurfaces and the laminations agree and the flat structures are equivalent, namely isotopic to each other.
%\subsection{$\R^2$-length function}

\begin{Remark}\label{r-DLRmix}
Following up on Remark~\ref{DLR-Flat}, observe 	that in the $\R^2$-mixed structures we consider here, the flat part is always directed. Forgetting the vertical direction we obtain a map $\Mix(\Sigma)\to \MixDLR(\Sigma)$, where $\MixDLR(\Sigma)$ denotes the mixed structures studied in \cite{DLR}. In general the map $\Mix(\Sigma)\to \MixDLR(\Sigma)$ has fibers $(\mathbb S^1)^k$ where $k$ denotes the number of connected components of $\Sigma'$. In \cite{DLR} the set $\MixDLR(\Sigma)$ is identified with the corresponding set of geodesic currents, a perspective that is generalized in \cite{OT0,OT1}, where the flat metrics associated to cubic (resp. quartic) differentials are considered.
\end{Remark}	
\subsection{The $\R^2$-length function of a mixed structure}
\label{s-LengthofMix}
Let $\M=(\subS,\K,\vmLam)$ be a $\R^2$-mixed structure 
on $\Sigma$. 
%
%%% Ass. measured foliations on $\subsS$
Denote by $(\Fol_1,\Fol_2)$ the vertical and horizontal measured
foliations on $\subS$ associated with the flat structure $\K$. 
%
%%% Ass. geodesic laminations
\label{s-PairofmLamofMix}
Let $\mLam_{\K,i}$ be the measured geodesic 
lamination on $\subS$ corresponding to $\Fol_i$, namely, with the same
intersection function.
It can be seen as a measured geodesic lamination in $\Sigma$.
Let
$\mLam_{\M,i}$ be the measured geodesic lamination on $\Sigma$ obtained by taking the
union of $\mLam_{\K,i}$  %
and of the measured geodesic lamination $\mLam_i=(\Lam_i, \nu_i)$ on
$\Sigma-\subS$, where $\Lam_i:=\supp(\nu_i)$. 
%%% $\R^2$-mixed structures as pairs of currents
Regarding measured geodesic laminations as geodesic
currents on $\Sigma$, we have
$$\mLam_{\M,i}:=\mLam_{\K,i}+\mLam_i$$

The {\em $\R^2$-length} of $\gamma\in\Gamma$ with respect to the mixed structure
$\M$ is then defined as the pair:
% $\vL_\M\colon \Gamma \to \R_+^2$,  and a
$$\vL_\M(\gamma):=(i(\mLam_{\M,i},\gamma))_{i=1,2}.$$
We denote $L_{\M,i}(\gamma):=i(\mLam_{\M,i},\gamma)$ the factors.
%where $i(\cdot,\cdot)$ denotes the usual intersection function.
%
Note that if $\subS=\Sigma$ then the length $L_{\M,i}$ agrees with $L_{\K,i}$.

This gives a map  $$\vL\colon \Mix(\Sigma)\to \ML\times \ML.$$
We will show in the next subsection that this is indeed a bijection.

\subsection{Interpretation of $\ML\times\ML$ as $\Mix(\Sigma)$}
\label{s-MixAndML2}
The goal of the section is to provide an identification of $\ML\times\ML$, the boundary $\dCharMaxWL(\Ga,\PSL(2,\R)^n)$ for the case $n=2$, with $\Mix(\Sigma)$ thus obtaining a geometric interpretation of the former space. The main ingredient for this is the following application of  the decomposition result of \cite{BIPP19} to a sum of
 measured geodesic laminations (seen as geodesic currents), a result that works for general $n$:
 \begin{Proposition}\label{p.dec}
	Let $\vL=(L_i)_{i=1,\ldots,n}\in(\ML)^n$, and denote by $\mu_i$ 
	the measured lamination with intersection function $L_i$. Then
	there exists a canonical open geodesic subsurface $\subS$, such
	that
        each $\mu_i$ decomposes as a disjoint union $\mu_i=\mu^i_{\subS}\cup\mu^i_\Lambda$
        of measured laminations,
        and  
	\begin{enumerate}
        \item $\mu^i_{\subS}$ is supported in $\subS$, $\mu^i_\Lambda$ in its complement;
        \item  $(\mu^i_{\subS})_i$ have positive joint systole 
      	\item $(\mu^i_\Lambda)_i$ are parallel, namely the union of their support is a geodesic lamination $\Lambda$ (contained in $\Sigma\setminus\subS$).
        \end{enumerate}
\end{Proposition}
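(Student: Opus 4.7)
The plan is to reduce the $n$-tuple statement to a single-current statement by working with the sum $\mu:=\sum_{i=1}^n\mu_i$, viewed as a geodesic current on $\Sigma$. Since each $\mu_i$ is a measured geodesic lamination, $i(\mu_i,\mu_j)\geq 0$ makes $\mu$ a well-defined positive current, but in general $i(\mu,\mu)>0$. The decomposition theorem of \cite{BIPP19} associates canonically to $\mu$ a maximal open geodesic subsurface $\subS\subseteq\Sigma$ such that $\mu|_\subS$ has positive systole and $\mu|_{\Sigma\setminus\subS}$ is (the current of) a measured geodesic lamination. I will take this $\subS$ as the canonical subsurface in the statement and set $\Lam:=\supp(\mu|_{\Sigma\setminus\subS})$.

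Next, using the restriction operation recalled in \S\ref{ss-gc}, I define, for each $i$,
$$\mu^i_\subS:=\mu_i|_\subS, \qquad \mu^i_\Lam:=\mu_i|_{\Sigma\setminus\subS}.$$
Since restriction is linear and multiplication by $\chi_{\calG(\subS)}$ and $\chi_{\calG(\Sigma\setminus\subS)}$ partitions $\calG(\Sigma)$ (the geodesics along $\partial\subS$, being counted in the complement as $\subS$ is open, are absorbed into $\Lam$), we get $\mu_i=\mu^i_\subS+\mu^i_\Lam$. Because $i(\mu_i,\mu_i)=0$, the same holds for each restriction, so $\mu^i_\subS$ and $\mu^i_\Lam$ are both measured geodesic laminations, giving (1) immediately.

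For (3), the supports of the $\mu^i_\Lam$ are all contained in $\supp(\mu|_{\Sigma\setminus\subS})=\Lam$, which by the \cite{BIPP19} decomposition is a geodesic lamination. In particular no two leaves chosen from different $\mu^i_\Lam$'s cross transversally, so the $(\mu^i_\Lam)_i$ are parallel in the sense of \S\ref{ss-Rntree}, and their supports union to $\Lam\subseteq\Sigma\setminus\subS$. For (2), for any nontrivial $\gamma\in\pi_1(\subS)$ not freely homotopic into $\partial\subS$, Proposition~4.13 of \cite{BIPP21} gives $i(\mu_i,\gamma)=i(\mu^i_\subS,\gamma)$; summing over $i$ yields $\sum_i i(\mu^i_\subS,\gamma)=i(\mu|_\subS,\gamma)$, and the positive systole of $\mu|_\subS$ furnished by the \cite{BIPP19} decomposition bounds this below uniformly. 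For $\gamma$ freely homotopic into $\partial\subS$, these boundary curves lie in $\Lam$, so one argues separately either directly or by invoking the boundary-weight statement built into the decomposition.

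The main obstacle I expect is bookkeeping at the boundary of $\subS$: one must ensure that when a simple closed geodesic $c\subset\partial\subS$ carries mass for some $\mu_i$, this mass is unambiguously assigned to $\mu^i_\Lam$ rather than split between the two pieces, and that the canonicity of $\subS$ is preserved by this choice. This is exactly what the BIPP19 decomposition guarantees for $\mu$, and the linearity of restriction then propagates the clean splitting to each $\mu_i$ individually; but writing this carefully, in particular handling overlap of supports $\supp(\nu_i)\cap\supp(\nu_j)$ inside $\Lam$, is where the care lies.
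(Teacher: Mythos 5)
Your proposal follows essentially the same route as the paper: form the sum current $\mu=\sum_i\mu_i$, apply the decomposition theorem of \cite{BIPP19} to obtain the canonical positive-systole subsurface, and then split each $\mu_i$ by restriction to the two parts, with parallelism of the lamination parts coming from $i(\mu|_{\Sigma\setminus\subS},\mu|_{\Sigma\setminus\subS})=0$. One sentence needs repair: $\chi_{\calG(\subS)}$ and $\chi_{\calG(\Sigma\setminus\subS)}$ do \emph{not} partition the set of geodesics (a geodesic crossing $\partial\subS$ transversally lies in neither set); the identity $\mu_i=\mu_i|_{\subS}+\mu_i|_{\Sigma\setminus\subS}$ instead holds because no leaf of $\mu_i$ crosses a decomposing curve $c$, which follows from $i(\mu_i,c)\leq i(\mu,c)=0$ --- precisely the observation the paper makes explicit before splitting each $\mu_i$.
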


  \begin{proof}
    %%%The two laminations associated to factors
  Let $\mu_i=(\Lam_i,\nu_i)$ be the measured lamination  
  on $\Sigma$ corresponding to $L_i$, that is such that
  $i(\mu_i,\cdot)=L_i$
  on $\Gamma$.
 
  %%% Decomposition from Bipp1
  As in \cite{BIPP19} we consider the collection $\Ee=\Ee_\mu$ of \emph{closed $\mu$-short solitary geodesics}. This is the collection, canonically associated to $\mu$, of  simple closed geodesics in $\Sigma$ that have $0$-intersection with $\mu$, and that do not intersect any geodesic that doesn't intersect the support of $\mu$. 
  Applying the decomposition theorem  \cite[Corollary 1.9]{BIPP19}
  to the geodesic current 
   $\mu=\sum_{i=1}^n\mu_i$,
 % where $\mu_i$ is seen as a geodesic
  %current on $\Sigma$, 
  % with intersection function $L:=\sum_i L_i$,
  we get that the surface decomposes along 
  the collection $\Ee=\Ee_\mu$ % of closed $\mu$-short solitary geodesics. This is the collection, canonically associated to $\mu$ of  simple closed geodesics with $0$-intersection with $\mu$, 
  in a finite number of open connected subsurfaces with geodesic boundary
  $\Sigma-\Ee=\bigcup_{v\in\Vv} \Sigma_v$,
  and the current $\mu$ decomposes as a sum
  $$\mu=\sum_{v\in\Vv}\mu_v 
  + \sum_{c\in\Ee} t_c \delta_c$$
  where
  $\mu_v=\mu|_{\Sigma_v}$ is the  restriction of $\mu$ to $\Sigma_v$ (recall \S\ref{ss-gc}).
  
  %, that is the
 % geodesic current on $\Sigma$ defined by
  %$$\mu_v(A)=\mu(A\cap\mathcal{G}(\Sigma_v))$$
  %where $\mathcal{G}(\Sigma_v)$ is the subset of geodesics of $\Sigmat$
  %projecting in $\Sigma_v$.
  %
  %
  Furthermore, for every $v\in \Ee$ 
  for which $\mu_v\neq 0$ precisely one of the following holds:
  \begin{enumerate}
  \item either $\Syst_{\Sigma_v}(\mu_v)>0$,
  \item or $\mu_v$ is a measured lamination.
  % compactly supported in $\mathring\Sigma_v$ and intersecting every curve in $\Sigma_v$.
  \end{enumerate}
  where 
   $$\Syst_{\Sigma_v}(\mu_v):=\inf \{i(\mu_v,c) |\,c\subset\mathring\Sigma_v \text{ closed geodesic }\}\;.$$
 
  Let $i\in\{1,\ldots,n\}$. 
  We first see that 
  no closed geodesic $c\in \Ee$  intersects transversally the
  support of $\mu_i$: indeed, since $i(\mu,c)=i(\mu_1,c)+i(\mu_2,c)=0$, we have $i(\mu_i,c)=0$.
  So the measured geodesic lamination $\mu_i$ decomposes as the union of
  measured geodesic laminations $\mu_{v,i}$ 
  included in the open subsurface $\Sigma_v$, and possibly
  closed leafs $c\in\Ee$ with transverse measure 
  $t_{c,i} \in \R_+$.
  That is, seeing all those measured geodesic laminations as geodesic currents on $\Sigma$:
  $$\mu_i=\sum_{v\in\Vv}\mu_{v,i} 
  + \sum_{c\in\Ee} t_{c,i} \delta_c$$
 
  By uniqueness of the decomposition of $\mu$ along $\Ee$, 
  for each $v\in\Vv$ we have  
  $$\mu_v=\sum_i \mu_{v,i}$$
  and for each $c\in\Ee$
  $$t_c=\sum_i t_{c,i}$$
 
  %%%% Parallel laminations
  Let $v\in\Vv$.
  If $\mu_v$ is a measured lamination, then the measured laminations
  $\mu_{v,i}$, $i=1,\ldots,n$
  %$(\mu_{v,i})_i$ 
  are parallel: indeed, as $i(\mu_v,\mu_v)=\sum_{1\leq i,j\leq
    n}i(\mu_{v,i},\mu_{v,j})=0$,
  and $i(\mu_{v,i},\mu_{v,j})\geq 0$,
  we get that $i(\mu_{v,i},\mu_{v,j})=0$ for all $i,j\in \{1,\ldots,n\}$.
  \end{proof}
 
We can now prove the main result of the section.
\begin{Proposition} 
\label{prop-MLxMLtoMix}
% Let $\Sigma$ be a closed surface 
% and $\Gamma=\pi_1(\Sigma)$.
Any $\vL=(L_1,L_2)$ in $\ML\times\ML$  is the $\R^2$-length function 
of a  unique 
$\R^2$-mixed structure $(\subS,\K,\vmLam)$ on $\Sigma$.
%
% %%% Description et lien avec bipp1
% Furthermore, let  $\mu_i$ be the measured geodesic lamination on $\Sigma$, 
% corresponding to $L_i$, and let 
% $\Sigma=\bigcup_{v\in V}\Sigma_v$ 
% be the decomposition 
% associated with the current $\mu=\mu_1+\mu_2$, 
% where $\mu_i$ is seen as a geodesic
% current on $\Sigma$, 
% given by the Theorem 1.1 of  \cite{BIPP19}. 
% %
% Then $\subS$ is the union of the subsurfaces $\Sigma_v$ 
% with positive systole, 
% % the flat structure $\K$  on $\Sigma_v$ 
% % being the unique flat structure with  
% % given by   Proposition~\ref{prop- SL2xSL2 boundary 0 type
% %   N},
% and the measured geodesic lamination $\mLam_i$ is the restriction of $\mu_i$
% to $\Sigma-\subS$.
% % 
% % $\mu_{v,i}$ of lamination type.
\end{Proposition}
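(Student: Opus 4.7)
The plan is to decompose $(L_1,L_2)$ via Proposition~\ref{p.dec} and then recognize the three pieces of the decomposition as the data of an $\R^2$-mixed structure. Writing $\mu_i$ for the measured geodesic lamination with intersection function $L_i$, Proposition~\ref{p.dec} applied to the pair $(\mu_1,\mu_2)$ yields a canonical open geodesic subsurface $\subS\subset\Sigma$ and decompositions $\mu_i=\mu^i_{\subS}\cup\mu^i_\Lambda$, where $(\mu^1_{\subS},\mu^2_{\subS})$ have positive joint systole on each component of $\subS$ and $(\mu^1_\Lambda,\mu^2_\Lambda)$ are parallel, their supports forming a geodesic lamination $\Lambda\subset\Sigma\setminus\subS$.

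From this decomposition I would directly read off two of the three pieces of the mixed structure: the subsurface $\subS$, and the $2$-measured geodesic lamination $\vmLam=(\Lambda,\nu_1,\nu_2)$ where $\nu_i$ is the transverse measure underlying $\mu^i_\Lambda$ (so that $\supp(\nu_1)\cup\supp(\nu_2)=\Lambda$ by construction). To produce the flat structure $\K$ on $\subS$, I would compactify each connected component $\subS_v$ of $\subS$ by collapsing its boundary components to punctures, and identify $\mu^i_{\subS}|_{\subS_v}$ with a measured foliation on the punctured surface. The positive joint systole condition on $(\mu^1_{\subS}|_{\subS_v},\mu^2_{\subS}|_{\subS_v})$ is precisely the statement that this pair of measured foliations \emph{fills} the punctured surface $\subS_v$; Hubbard--Masur (in the version for punctured surfaces that produces half-translation structures extendible at punctures with possible $\pi$-angle singularities) then provides a unique half-translation structure on $\subS_v$ whose vertical/horizontal foliations realize this pair. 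Assembling these over the components of $\subS$ gives the desired $\K$.

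The fact that the resulting triple $\M=(\subS,\K,\vmLam)$ satisfies $\vL_\M=(L_1,L_2)$ is then immediate from how $\vL_\M$ was defined in \S\ref{s-LengthofMix}: by construction $\mLam_{\M,i}=\mLam_{\K,i}+\mLam_i$ as currents, and the intersection functions of $\mLam_{\K,i}$ and $\mLam_i$ together recover $L_i=i(\mu_i,\cdot)$ because the pieces $\mu^i_{\subS}$ and $\mu^i_\Lambda$ have disjoint supports. For uniqueness, suppose $\M'=(\subS',\K',\vmLam')$ is another $\R^2$-mixed structure with the same $\R^2$-length function. Then the currents $\mu_{\M',i}$ coincide with $\mu_i$ for $i=1,2$, and the splitting $\mu_{\M',i}=\mu_{\K',i}+\mu'_i$ is a decomposition of $\mu_i$ into a piece whose underlying measures fill (jointly) a geodesic subsurface and a piece that is part of a parallel pair supported off of it. The characterization of the canonical decomposition in Proposition~\ref{p.dec} (via closed $\mu$-short solitary geodesics) forces $\subS'=\subS$ and $\vmLam'=\vmLam$, and the uniqueness in Hubbard--Masur then forces $\K'=\K$ up to isotopy.

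The main point that requires care is the invocation of Hubbard--Masur in the punctured setting: one must match the regularity at punctures built into our definition of half-translation structure (conical singularities of angle $k\pi$ with $k\ge 2$, and possibly angle $\pi$ at punctures) with the boundary behavior of the measured foliations coming from $\mu^i_{\subS}$. The other delicate point, for uniqueness, is verifying that the boundary curves of the geodesic subsurface carrying $\K'$ are themselves canonically determined by the pair of currents, which is exactly what the short-solitary characterization of $\Ee_\mu$ in the proof of Proposition~\ref{p.dec} provides.
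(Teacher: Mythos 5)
Your proposal is correct and follows essentially the same route as the paper: decompose $(\mu_1,\mu_2)$ via Proposition~\ref{p.dec}, read off the subsurface and the $2$-measured lamination, realize the jointly-filling pair of foliations on each component of $\subS$ by a half-translation structure, and get uniqueness from the canonicity of the decomposition plus injectivity of the foliation-pair map. The one caveat is attribution: the realization of a \emph{pair} of transverse measured foliations satisfying the binding/filling condition as the vertical and horizontal foliations of a quadratic differential is the Gardiner--Masur theorem (with the punctured-surface version the paper takes from Gardiner--Wang \cite{GaWa12}, and the injectivity from \cite{GaMa91}), not Hubbard--Masur, which fixes a conformal structure and prescribes only one foliation; the mathematical statement you invoke is the right one, so this is a citation issue rather than a gap.
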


\begin{proof}

As above we denote by $\mu_i$ the geodesic current corresponding to the lamination $(\Lambda_i,\nu_i)$ with length function $L_i$. We use Proposition~\ref{p.dec} to decompose $\mu_i:=\mu^i_{\subS}+\mu^i_\Lambda$. %We furthermore set 

%%% Flat pieces
For any connected component $\Sigma_v$ of the positive systole subsurface $\subS$, we 
%We consider the case $\Syst_{\Sigma_v}(\mu_v)>0$, and 
denote by
 $\Gamma_v=\pi_1(\Sigma_v)$ and 
$L_{v,i}\colon \Gamma_v\to \R_+$ the intersection function of 
$\mu_{v,i}$ on
$\Sigma_v$. Then
$L_{v,i}$ is the intersection function $I_{\Fol_i}$ 
of a measured foliation $\Fol_i$ on the surface $\Sigma_v$. 
% \cite[Theorem 11.4]{FLP}
%
Denote $C=\Syst_{\Sigma_v}(\mu_v)$.
We have that, for all non parabolic $\gamma$ in $\Gamma_v$,
$$\max (i(\Fol_1,\gamma), i(\Fol_2,\gamma))\geq \frac{C}{2} >0.$$
It is known
(see for example \cite[Theorem 7]{GaWa12})
% finite type surface ? : ok
%
%
that the two measured foliations $\Fol_1, \Fol_2$ are transversely realisable, 
that is, up to
replacing $\Fol_i$ by an equivalent measured foliation 
(an operation that doesn't change the length
function), 
they arise as the vertical and horizontal measured
foliation
of  a flat structure $\K_v$ on the surface $\Sigma_v$.
In particular the measured geodesic laminations on $\Sigma_v$ associated with $\K_v$ are
$$\mLam_{\K_v,i}=\mu_{v,i}$$
for $i=1,2$. 
%Let $\subS$ be the open subsurface obtained by taking the union of 
%the components $\Sigma_v$ with positive systole, and 
We denote by $\K$ the flat structure on $\subS$ equal to $\K_v$ on each $\Sigma_v$. 

Let $\mLam_i$  be the measured geodesic lamination 
obtained by taking the
union of $\mu_{v,i}$, for the $v$ such that $\mu_v$ is a lamination, 
and of the closed geodesics  $c\in\Ee$ with weight $t_{c,i}$.
 Then 
$\vmLam=(\mLam_1,\mLam_2)$ is a pair of parallel measured geodesic laminations on
$\Sigma$ with $\lambda_\mu=\lambda_1+\lambda_2$, and
 $\M=(\subS,\K,\vmLam)$ is a mixed structure on $\Sigma$, 
with associated pair of laminations
%%% in the currents on $\Sigma$ :
$$\mLam_{\M,i}=\mLam_{\K,i}+\mLam_i=\mu_i.$$
In particular taking intersection functions on $\Gamma$
we have $L_i=L_{\M,i}$ for each $i$.

The uniqueness of $\M$ (up to isotopy of the flat part) is given by
the uniqueness of the decomposition in Corollary 1.9 of \cite{BIPP19} and by
the injectivity of the natural map from quadratic differentials to
pairs of equivalence classes of measured foliations
%see Gardiner-Masur 
\cite[Theorem 3.1]{GaMa91}.
% finite type surface ?: ok
\end{proof}

% We now turn to the metric interpretation of $\R^2$-mixed structures.
% We first deal with the lamination part.

\section{The  $\R^2$-tree-graded space dual to a mixed structure}
\label{s-DualTreeGradedSpace}

%\subsection{Construction}
In this section, we construct
the {\em $\R^2$-tree-graded space $X_\M$ dual}  to a $\R^2$-mixed structure  
$\M=(\subS,\K,\vmLam)$ on a closed hyperbolic surface $\Sigma$. In \S\ref{ss-treegrad} we recall the definition of tree-graded space, and discuss a general construction to glue pseudometrics which we use in \S\ref{ss-constXM} to construct the tree graded space $X_M$ dual to a mixed structure $\M$. In \S\ref{ss-proofXM} we prove that that space is indeed tree graded, ad discuss its relevant geometric properties. 

\subsection{Generalities on $\R^n$-tree-graded spaces.}\label{ss-treegrad}

%%% The definition
Recall from \cite{DrSa05} the notion of tree-graded metric space%
\footnote{Observe, however, that in \cite{DrSa05} the tree-graded spaces are additionally assumed to be complete. We relax this assumption because the tree-graded space associated to a mixed structure will, in general, not complete: it is well known that the $\R$-tree dual to a measured lamination $\lambda$ is only complete when $\lambda$ has no minimal component, and these $\R$-trees are examples of tree graded spaces associated to mixed structures.}:

%%% Def of tree-graded metric space wrt P :
\begin{Definition}
	% \cite[Def 1.10]{DrSa05}
	A geodesic metric space $(X,d)$ is {\em tree-graded}
	with respect to a collection $\Pieces$ of geodesic subsets called {\em pieces}
	if
	\begin{description}
		\item[(TG1)] Any two different pieces have at most one common point.
		\item[(TG2)] Any simple geodesic triangle in $X$ is contained in one piece.
	\end{description}
	Here a geodesic triangle is {\em simple} if its sides meet only in the respective endpoints.
%	is defined as a simple loop composed of	three geodesics.
	%%%
\end{Definition}

We now adapt this to define an {\em tree-graded $\R^n$-space}, recall from Definition~\ref{d.Rnmetric} the notion of an $\R^n$-metric space:
%
%%% Def of tree-graded R^n-metric space wrt P :
\begin{Definition}
  A $\R^n$-metric space $(X, \vd=(d_i)_{i=1}^n)$
  is {\em tree-graded} with respect to a collection $\Pieces$ of subsets
  if $X$ is tree-graded (with respect to $\Pieces$) 
  for the associated $\ell^1$-metric $d:=\sum_{i=1}^n d_i$.
\end{Definition}

%%% Transition
%%% Tree-graded spaces as quotients of tree-graded-pseudometrics
In the next subsection, we will construct the tree-graded space associated with a mixed
structure on a surface $\Sigma$ as a quotient of a blowup $\Sigmabt$
of $\Sigma$ by pseudometrics defined by gluing.
%
% \subsection{Gluing  pseudometrics}
We now recall the  general construction of  a global pseudometric 
on a CAT(0) surface $\Sigmat$ obtained by gluing pseudometrics defined on
geodesic pieces, the initial ingredient for the construction in the next section.

%%% General setting
Let $\Sigma$ be a locally CAT(0) surface, and
$\Ee$ be  a set of disjoint simple closed geodesics on $\Sigma$.
We denote by $\Eet$ the set of their lifts to the universal cover $\Sigmat$. 
A {\em piece}  of $\Sigmat$ is defined as the closure $\Pt$
of a complementary component $\Wt$ of $\Eet$ in $\Sigmat$.
Two different pieces are {\em adjacent} if they have non-empty intersction
(which is then a geodesic in $\Eet$ bounding each of the pieces).
We denote by $\calP(\Sigmat)$ the set of pieces of $\Sigmat$.

\begin{Definition}
A {\em chain} between two points $x,y$ in $\Sigmat$
is a sequence $C=(x_0=x,x_1,\ldots ,x_{k+1}=y)$ in $\Sigmat$ such that any
two consecutive points $x_j$ and $x_{j+1}$ are in a same piece
$\Pt_j$, with $\Pt_j\neq\Pt_{j+1}$.
\end{Definition}

 For $j=1,\ldots,k$ the pieces $\Pt_{j-1}$ and $\Pt_j$ are
then adjacent and  $x_j$ is on their common boundary geodesic $\ct_j$. 
 Such a chain defines a path in the simplicial tree dual to $\Eet$.
 We call the chain {\em straight} if this path is geodesic,
 that is if and only
 {if
 $\ct_{j-1}\neq \ct_{j}$ for $j=1,\ldots,k$}. Then
 $\ct_1,\ldots \ct_k$ is the ordered sequence of geodesics in $\Eet$
 separating $x$ and $y$ (going from $x$ to $y$).

Given a pseudometric $d^\Pt$  on each piece $\Pt$ of $\Sigmat$,
we define the $d$-length of a chain $C$ as 
$$\ell_d(C)=\sum_{j=0}^{k+1} d^{\Pt_j}(x_j,x_{j+1})$$ 
The induced pseudometric on $\Sigmat$ is then defined  by 
$$d(x,y)=\inf_C \ell_d(C)$$
where the infimum is taken over all chains $C$ joining $x$ to $y$.
It is easy to see that we may restrict to straight chains $C$.
If the restriction of $d^{\Pt}$ to the geodesics $\ct$ of $\Eet$ is $0$ for each piece $\Pt$, then the pseudo-distance $d(x_j,x_{j+1})$ does not  depend on the choice of $x_j$ on $\ct_j$, and thus all straigth chains have the same length.
In this case we may alternatively define $d(x,y)$ as the $d$-length of any
straight chain, thus in particular $d$  restricts to the original
pseudometric $d^\Pt$ on each piece $\Pt$.

\compl{Note further that any $\met$-geodesic
of $\Sigmat$ crosses the collection of
decomposing geodesics $\Eet$ in a straight chain.
In particular if the pseudometrics $d^\Pt$ are straight then $d$ is
also straight, namely $\met$-geodesics of $\Sigmabt$ are geodesics
for $d$.}

\subsection{Construction of the $\R^2$-space $X_\M$}\label{ss-constXM}
Let $\M=(\subS,\K,\vmLam)$  be a $\R^2$-mixed structure
on a closed hyperbolic surface $\Sigma$.
Here, as always,  $\vmLam=(\Lam,\nu_1,\nu_2)$. 

% We lift $\M$ to a mixed structure $\Mt=(\subSt,\Kt,\vmLamt)$  
% on the universal cover $\Sigmat$ of $\Sigma$.

%%% 1. Blow-up in the lamination part
%\subsection{Blowing-up the atoms in the lamination part}
We first
 resolve the atoms in the lamination part, 
by taking the blow-up $(\Sigmab,\vmLamb)$ of $(\Sigma,\vmLam)$
as in Section~\ref{s-DualnTree}.
Recall from  Sections~\ref{s-DualTreeOfMLam} and
\ref{s-DualnTree}
that $\Sigmab$ denotes the CAT(0) surface obtained inserting  in $\Sigma$ a flat foliated  annulus $B(c)=c\times [0,1]$ at each isolated leaf $c$ of $\Lambda$, and $\Lamb$ is the associated lamination whose non-atomic transverse measure $\nub_i$ is 
obtained by extending $\nu_i$ with the transverse
measure $\nu_i(c)dt$ on $[0,1]$ for each foliated annulus $B(c)$.

% we denote by $(\Sigmab,\Lamb)$   the CAT(0) surface 
%with geodesic lamination
%obtained by cutting the surface $\Sigma$ along the isolated leafs $c$
%of $\Lam$ and  inserting  a flat foliated annulus $B(c)=c\times [0,1]$ for each $c$,
%and that $\vmLamb=(\Lamb,\nub_1,\nub_2)$, where $\nub_i$ is the
%non-atomic transverse measure on $\Lamb$
%obtained by extending $\nu_i$ by taking the transverse
%measure $\nu_i(c)dt$ on $[0,1]$ for each foliated annulus $B(c)$.

Our next goal is to define a $\R^2$-pseudometric $\vd_\M$ on
$\Sigmabt$ associated with $\M$,
that is a pair  of pseudometrics $(d_{\M,i})_i$  on $\Sigmabt$.
We first define the pieces which we will glue as outlined in \S~\ref{ss-treegrad}.
% and then make precise the abstract process of
% gluing pseudometrics defined on pieces.
%
%\subsection{Decomposition in pieces}
%Pieces in the universal cover $\Sigmabt$
% decomposition curves
We denote by $\subSb$ the open subsurface of
$\Sigmab$ corresponding to $\subS$,
and denote by $\Eeb$ its set of boundary geodesics in $\Sigmab$.
We denote by $\Eebt$ the set of their lifts to $\Sigmabt$. 
\begin{Definition}
A {\em piece}  of $\Sigmabt$ is the closure $\Pt$
of a complementary component $\Wt$ of $\Eebt$ in $\Sigmabt$.
We will call such a piece 
\begin{itemize}
	\item a {\em flat piece} if $\Wt$ projects in $\subSb$, and
	\item a {\em  lamination piece} otherwise\footnote{beware that the metric
  $\metb$ is hyperbolic on flat pieces.}.
\end{itemize}
\end{Definition}

\begin{Example}
	If $\M$ is the mixed structure described in Example~ \ref{ex:mix}, the locally CAT(0) surface $(\Sigmab,\metb)$ is obtained by endowing
        $\Sigma$ with an hyperbolic metric and then gluing
        three flat cylinders $C_i=[0,1]\times c_i$ to the hyperbolic surface
        $\Sigma\setminus\{c_1,c_2,c_3\}$.
        The pieces in its  universal cover $\Sigmabt$ have four
        types:
        flat pieces
        isometric to the universal covers of the completion of $\subS_1$ and  $\subS_2$ (up to now these pieces are endowed with a complete hyperbolic metric with geodesic boundary), lamination
        pieces of anular type, isometric to the universal cover of
        $C_1$, namely to the Euclidean strip $[0,1]\times\R\subset\R^2$, with measured
        geodesic lamination equal to the vertical foliation with 
        transverse measures $\nu_1=x_1dx$, $\nu_2=y_1dx$,
        and lamination pieces isometric the universal cover
        of $(\Sigma \setminus (\ov \subS \cup c_3))\cup C_2\cup C_3$.
       \end{Example}

%\subsection{The $i$-factor pseudometrics}
We now define a new pseudometric on each piece. Let $i\in\{1,2\}$.
%
% %%%
% Recall that 
% We denote by $\mLamb_i$  and $\mLamb_{\K,i}$ the respective blow-ups
% of $\mLam_i$  and $\mLam_{\K,i}$.
% $d_{\mLamb_i}$ 
% the continuous path pseudometric  on the universal
% cover $\Sigmabt$ associated to the measured geodesic lamination
% We denote $\mLamb_i=(\Lamb,\nub_i)$. 
% Observe that $d_{\mLamb_i}$ and d_{\mLamb_i}$
% restricts to $0$ on flat pieces.
%
%%% Define the pseudometric on lamination pieces
On each  lamination piece $\Pt$ of $\Sigmabt$,
we define $d^\Pt_{\M,i}$ as the
restriction to $\Pt$ of the pseudometric $d_{\mLamb_i}$ associated to
the non-atomic  mesured lamination $\mLamb_i:=(\Lamb,\nub_i)$.
% where $\mLamb_i$ is the blow-up of $\mLam_i$.
% \compl{Beware this is not exactly the blow-up of $\mLam_i$, but "finer" (we blowup
% more closed geodesics : union of atoms of $\mLam_1$ and $\mLam_2$)}
%
%%% Defining the pseudometric on flat pieces
On each flat piece $\Pt$ of $\Sigmabt$ we, instead, define the pseudometric $d^\Pt_{\M,i}$ as  intersection with the horizontal (resp. vertical) measured foliation on $K$. More precisely we consider the canonical projection
$\Pt=\ov{\Wt}\to \Wtc$ to the completion $\Wtc$ of $\Wt$ with respect to the CAT(0) metric
given by the flat structure $\K$, and we define the pseudometric $d^\Pt_{\M,i}$ as the pullback 
of the pseudo-distance $d_{\K,i}$ introduced in \S~\ref{s-Flat} through this projection. 
% \compl{
%   Note that  $d^\Pt_{\M,i}$ do not coincide with $d_{\mLam_{\K,i}}$ on
%   $\Pt$
%   where $\mLamb_{\K,i}$ is the mesured lamination induced by
%   $\mLam__{\K,i}$ in $\Sigmab$,
% %  but rather with the pseudometric $d_{\Fol_{\K,i}}$.
% }
% 
%
Note that  the restriction of $d^\Pt_{\M,i}$ to any boundary
geodesic of a piece $\Pt$ is always $0$.

%\subsection{The glued pseudometrics}
We  define the pseudometric  $d_{\M,i}$ on $\Sigmabt$
as the gluing of the pseudometrics $d^\Pt_{\M,i}$ on the pieces
$\Pt\in\calP(\Sigmabt)$ as in Section ~\ref{ss-treegrad}.
We denote by
\[\vd_M=(d_{\M,i})_i\]
the corresponding $\R^2$-pseudometric on $\Sigmabt$, and by
%
%  The $\ell^1$-pseudometric
$$d_\M:=d_{\M,1}+d_{\M,2}$$
the associated  $\ell^1$-pseudometric on $\Sigmabt$.
It follows from the construction that $d_\M$ is the gluing of the $\ell^1$-pseudometrics
$d^\Pt_{\M}=\sum_i d^\Pt_{\M,i}$
on the pieces $\Pt\in\calP(\Sigmabt)$.

%\subsection{The $\R^2$-metric space $X_\M$}

\begin{Definition}
The {\em $\R^2$-metric space $X_\M$ associated with $\M$} is 
the quotient
$$X_\M:=\Sigmabt/d_\M$$
of $\Sigmabt$ by the $\ell^1$-pseudometric $d_\M$,
endowed with the $\R^2$-metric induced by $\vd_\M$,
that is the pair of  pseudometrics
induced by $d_{\M,i}$, $i=1,2$.
\end{Definition}
%With a slight abuse of notation we will also denote by
%$$\vd_\M=(d_{\M,i})_i$$
%the $\R^2$-metric induced on $X_\M$.
%
The action of $\Gamma$ on
$\Sigmabt$ induces an action of $\Gamma$ on $X_\M$ preserving the $\R^2$-metric $\vd_\M$.

\begin{Example} 
  The pieces in the  tree-graded $\R^2$-space $X_\M$ associated to the mixed structure $\M$ described in Example ~\ref{ex:mix} have 4 isometry types:
	\begin{itemize}
\item flat pieces isometric
	to the completion of the universal cover of
	$\K_1\setminus\{p_1, p_0,p_0'\}$.
	\item flat pieces isometric
	to the completion of the universal cover of
 $\K_2\setminus\{p_1,p_2\}$, 
	\item closed segments of $\R^2$-length
	$(x_1,0)$,
	\item  $\R^2$-simplicial trees of infinite valence, obtain by attaching, to the dual tree  to the curve $c_3$ in the
	subsurface $\Sigma\setminus \subS$, a segment of $\R^2$-length $(x_2,y_2)$ to the fixed points of the elements of $\Gamma_\Sigma$ corresponding to the curve $c_2$. 
	\end{itemize}
Observe that, in particular, the tree associated to the  lamination pieces are, in this example, not minimal.
\end{Example}

\subsection{Basic properties of $X_\M$}\label{ss-proofXM}
%\subsection{The tree-graded structure of $X_\M$}
In this section we prove that $X_\M$ is indeed tree-graded, that its induced  $\R^2$-length function corresponds to
the pair $(\mLam_{\M,1},\mLam_{\M,2})$ of measured geodesic
laminations associated with the mixed structure $\M$, and we discuss the isometry types of the pieces of $X_\M$.

Recall that we defined the laminations $\mLam_{\M,i}$
% associated to the mixed structure $\M$
as the sum 
of  two measured geodesic laminations  on $\Sigma$
$$\mLam_{\M,i}:=\mLam_{\K,i}+\mLam_i$$
where 
$(\mLam_{\K,i})_{i=1,2}$ are the measured geodesic laminations
supported in $\subS$  induced
by the horizontal (resp. vertical) foliation of the flat structure $\K$  and $\mLam_i=(\Lam,\nu_i)$.

We denote by $L_{X_{\M,i}}$, $i=1,2$ the length function of $d_{\M,i}$ on
$X_\M$,
namely
\[L_{X_{\M,i}}(\gamma):=\inf_{x\in X_\M}d_{\M,i}(x,\gamma.x) \;.\]

We denote by $\piXM\colon \Sigmabt\to X_\M$ the canonical projection. 
The 
{\em pieces} of $X_\M$ are the images $X_\Pt:=\piXM(\Pt)$ in $X_\M$ of the pieces $\Pt$ of $\Sigmabt$.
We denote by $\Pieces(X_\M)$ the set of pieces of $X_\M$.
When a lamination piece $\Pt\subset \Sigmabt$ does not meet
the lamination $\Lamb$, then the
corresponding piece $X_\Pt$ is reduced to a point.
We will call the space $(X_\M,\vd_\M=(d_{\M,1},d_{\M,2}))$  the 
{\em tree-graded  $\R^2$-space dual to $\M$}.

\begin{Proposition}
  \begin{enumerate}
    %%% Tree-graded structure
  \item
    \label{it-TreeGraded}
    The $\R^2$ metric space $(X_\M,\vd_\M)$ is  tree-graded  with
    respect to  $\Pieces(X_\M)$.
    % Moreover, $d_{\M}=d_{\M,1}+d_{\M,2}$ and
    % the pseudometrics $d_{\M,i}$ are convex.
    % and each piece is geodesic for $d_{\M,i}$ ?

  \item
    \label{it-LamPieces}
    If $\Pt$ is a lamination piece, 
    $(X_\Pt, \vd_\M)$ 
    is isometric to the image of $\Pt$ in the  $\R^2$-tree 
    $T(\vmLam):=\Sigmab/d_\mLamb$ dual to the $2$-measured geodesic
    lamination $\vmLam$. In particular $X_\Pt$ is then a $\R^2$-tree.

  \item
    \label{it-FlatPieces}
    If $\Pt$ is a flat piece, then 
    $X_\Pt$ is isometric to the complete flat surface $\Wtc$, 
    the pseudometrics
    $d_{\M,i}=d_{\K,i}$   are the vertical/horizontal pseudometrics, 
    and $d_\M$ is  the $\ell^1$-metric $d_\K$
     (see \S~\ref{s-Flat}).

  \item \label{it-LengthsCorrespond}
    For all $\gamma\in\Gamma$,
    \begin{equation*}
      L_{X_{\M,i}}(\gamma)=i(\mLam_{\M,i},\gamma)=L_{\M,i}(\gamma)
    \end{equation*}
  \end{enumerate}
\end{Proposition}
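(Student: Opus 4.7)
The plan is to handle the four parts in the order $(3)$, $(2)$, $(1)$, $(4)$, since the first three give the geometric identification of the pieces and $(4)$ follows from these by a length-decomposition argument.

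For $(3)$, the pseudometric $d_{\M}^{\Pt}$ on a flat piece is by construction the pullback of $d_\K$ on the completion $\Wtc$, which by Lemma~\ref{lem-L=L1+L2Flat}(\ref{it:d^1=d_1+d_2Flat}) equals $d_{\K,1}+d_{\K,2}$. Since $d_\K$ is a genuine metric on $\Wtc$, the quotient of $\Pt$ by $d_\M^\Pt$ identifies canonically with $\Wtc$, with induced pair of pseudometrics equal to $(d_{\K,1},d_{\K,2})$. For $(2)$, the pseudometric $d_{\M,i}^\Pt$ on a lamination piece is by definition the restriction of $d_{\mLamb_i}$ to $\Pt$, so by Proposition~\ref{prop-isometry} the quotient of $\Pt$ by $d_\M^\Pt=d_{\mLamb,1}+d_{\mLamb,2}$ is exactly the image of $\Pt$ in the $\R^2$-tree $T(\vmLam)$, with the inherited $\R^2$-metric.

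For $(1)$, the crucial observation is that each pseudometric $d_\M^\Pt$ vanishes on every boundary geodesic $\ct\in\Eebt$ of $\Pt$: on a lamination piece this is because the boundary leaves of $\Ee^b$ either bound a component disjoint from $\Lam^b$ or lie on the boundary of a foliated blown-up annulus, and on a flat piece because the boundaries project to punctures/cusps in $\Wtc$. Hence every such geodesic collapses to a single point in $X_\M$, and two adjacent pieces meet in exactly that one point, giving (TG1). For (TG2), I would first check that each $X_\Pt$ is itself geodesic for $d_\M^\Pt$ (flat pieces by Lemma~\ref{lem-L=L1+L2Flat}, lamination pieces as subsets of an $\R$-tree), so any geodesic of $X_\M$ between two points is a concatenation of piece-geodesics glued at the cut points $\piXM(\ct)$; a simple geodesic triangle visiting more than one piece would have to traverse some cut point at least twice, contradicting simplicity.

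For $(4)$, the plan is to exploit the tree-graded structure via the Bass--Serre tree $T_{\mathrm{BS}}$ dual to $\Eebt$, on which $\Gamma$ acts. If $\gamma$ fixes a vertex, it stabilizes a single piece $\Pt$, and parts $(2)$--$(3)$ reduce $L_{X_{\M,i}}(\gamma)$ either to $L_{T(\mLam_i)}(\gamma)=i(\mLam_i,\gamma)$ on a lamination piece or to $i(\Fol_i,\gamma)=i(\mLam_{\K,i},\gamma)$ on a flat piece; in either case the complementary summand of $\mLam_{\M,i}$ has zero intersection with $\gamma$, so $L_{X_{\M,i}}(\gamma)=i(\mLam_{\M,i},\gamma)$. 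If $\gamma$ acts hyperbolically on $T_{\mathrm{BS}}$ with axis crossing pieces $\Pt_1,\dots,\Pt_k$ in one period, the tree-graded structure forces a $d_\M$-axis of $\gamma$ to decompose as a concatenation of segments in translates of the $X_{\Pt_j}$, joined at cut points, yielding $L_{X_{\M,i}}(\gamma)=\sum_j L_{X_{\Pt_j},i}(\gamma_j)$ for the appropriate local displacements $\gamma_j$. This matches the decomposition of $i(\mLam_{\M,i},\gamma)$ via restriction to pieces using \cite[Proposition 4.13]{BIPP21}.

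The main obstacle I expect is the careful bookkeeping for $(4)$ in the hyperbolic case, particularly ensuring that the contributions of the blown-up atomic leaves of $\Lam$ match on both sides: when the axis of $\gamma$ crosses a foliated annulus $B(c)$, the transverse weight $\nu_i(c)$ contributes both to $d_{\M,i}$ (via the inserted Euclidean strip) and to the intersection $i(\mLam_{\M,i},\gamma)$, and the two contributions have to be identified. Once this bookkeeping is set up cleanly via the tree-graded decomposition, the proof of $(4)$ becomes a summation of already-established piecewise identities.
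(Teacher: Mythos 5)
Your treatment of parts (2) and (3) coincides with the paper's: $X_\Pt$ is the maximal Hausdorff quotient of $\Pt$ for the pseudometric $d^\Pt_\M$, and the identifications with $\Wtc$ and with the image of $\Pt$ in $T(\vmLam)$ follow as you say. For part (1) you are also on the paper's track, but two steps are asserted rather than proved. In (TG1), knowing that each boundary geodesic $\ct\in\Eebt$ collapses to a point does not by itself show that two distinct pieces meet in \emph{at most} that point: you must rule out that the glued pseudometric identifies an interior point of one piece with a point of another. The paper does this with the straight-chain description of $d_\M$: if $d_\M(x,y)=0$ with $x\in\Pt$, $y\in\Ppt$, then the chain length $\sum_j d^{\Pt_j}_\M(x_j,x_{j+1})=0$ forces $d^{\Pt_0}_\M(x,x_1)=0$ with $x_1$ on the separating geodesic, so both images equal $x_\ct$. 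In (TG2), your ``a simple triangle would traverse some cut point twice'' argument breaks down precisely when the separating point $x_\ct$ coincides with a vertex of the triangle: then the two sides through it meet only at an endpoint and simplicity is not violated. The paper handles this case by changing the lift of that vertex and iterating, using that the geodesics in $\Eebt$ are uniformly separated so the process terminates.

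For part (4) you take a genuinely different route (Bass--Serre tree, elliptic/hyperbolic dichotomy, summing ``local displacements $\gamma_j$''), and as written it has a real gap: in the hyperbolic case the pieces crossed by one period of the axis are permuted, not stabilized, by $\gamma$, so there are no group elements $\gamma_j$ whose translation lengths you could sum; what you actually need is the sum of the $d_{\M,i}$-distances between consecutive cut points along a fundamental domain of an axis, together with a proof that a $d_\M$-axis exists and has this concatenated form. The paper avoids all of this by working upstairs in $\Sigmabt$: for an arbitrary $x$, the $\metb$-geodesic from $x$ to $\gamma x$ crosses $\Eebt$ in a straight chain, so $d_{\M,i}(x,\gamma x)=\sum_j d^{\Pt_j}_{\M,i}(x_j,x_{j+1})$; splitting the sum over lamination and flat pieces identifies the two partial sums with $d_{\mLamb_i}(x,\gamma x)\ge i(\mLamb_i,\gamma)$ and $d_{\mLamb_{\K,i}}(x,\gamma x)\ge i(\mLamb_{\K,i},\gamma)$, giving the lower bound $i(\mLam_{\M,i},\gamma)$ for every $x$, with equality when $x$ lies on a $\metb$-axis of $\gamma$. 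This single computation also absorbs the blown-up annulus bookkeeping you flag at the end, since the blow-up measures $\nub_i$ are constructed exactly so that $i(\mLamb_i,\gamma)=i(\mLam_i,\gamma)$.
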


\begin{proof}
  The projection $\piXM\colon \Sigmabt\to X_\M$
  sends
	each geodesic  $\ct\in\Eebt$ to a point $x_\ct$ of $X_\M$.

We first prove (\ref{it-LamPieces}) and (\ref{it-FlatPieces}). 
%%% Description of the pieces of $X_\M$
By construction, the piece $X_\Pt$ is the biggest Hausdorff quotient of
$\Pt$ with respect to the $\ell^1$-pseudometric $d_\M$, 
endowed with the quotient pseudometrics $d_{\M,i}$. As a result (\ref{it-FlatPieces}) holds.

To check (\ref{it-LamPieces}) observe that if $\Pt$ is a lamination piece, 
since $d_{\M,i}$ is the
restriction of $d_{\mLamb_i}$, then $d_\M$ is the restriction of the pseudometric
$d_\mLamb$ 
induced  by the measured lamination $\mLamb=(\Lamb,\nub_1+\nub_2)$ on $\Sigmab$, 
and  $(X_\Pt, d_{\M,1}, d_{\M,2})$ 
identifies with the image of $\Pt$ in the  $\R^2$-tree 
$T(\vmLam):=\Sigmab/d_\mLamb$ dual to the $2$-measured geodesic lamination $\vmLam$ 
(see Section~\ref{s-DualnTree}).
% In particular $X_\Pt$ is then a $\R^2$-tree.
%
\compl{Note that the action of $\Gamma_\Pt$ on $X_\Pt$ is not necessarily minimal, due to
	possible segments fixed by boundary elements corresponding to
	atoms.}%

We now prove (\ref{it-TreeGraded}).
Since all pieces $X_\Pt$ are geodesic for the
$\ell^1$-metric, it follows easily that $(X_\M,d_\M)$ is geodesic
(a geodesic between $x$ and $y$ is obtained by concatenating geodesics in
the separating pieces).

	We first prove  (TG1), by showing  that
	if two pieces $X_\Pt, X_{\Pt'}$ meet,
	then $X_\Pt\cap X_{\Pt'}=x_\ct$ where $\ct$ is the  boundary component
	of $\Pt$ separating $\Pt$ from $\Pt'$:
	we denote by $\ov{x}$ the image in $X_\Pt$ of a point $x$ in $\Pt$.  Suppose that 
	$\ov{x}=\ov{y}$ with $y\in\Ppt$.  
	Then  
	\begin{equation}\label{e-d0}
		d_\M(x,y)=0=\sum_{j=0}^{k+1} d^{\Pt_j}_\M(x_j,x_{j+1})
		\end{equation} 
	where
	$(x_0,\ldots,x_{k+1})$ is a straight chain from $x$ to $y$. Recall that this means that
	$x_0=x$, $x_{k+1}=y$ and, for all $j=1,\ldots,k$, the point $x_j$ belongs to the geodesic $\ct_j$, where  
	$(\ct_j)_{j=1,\ldots, k}$ is the ordered sequence  of geodesics in $\Eebt$
	separating $x$ and $y$, and we denote by  $\Pt_j$ the piece containing $x_j$ and $x_{j+1}$.
	Then $\ct_1=\ct$ and Equation \eqref{e-d0} implies that
        $d^{\Pt_0}_\M(x,x_1)=0$ proving
	that $\ov{x}=x_\ct=\ov{y}$.

	We now verify (TG2).
	Consider a simple non-trivial geodesic triangle
        with vertices $\ov{x}, \ov{y},\ov{z}$ in $X_\M$, and lift its vertices to
	three points $x,y,z$ in $\Sigmabt$. If $x,y,z$ are not in a common
	piece of $\Sigmabt$, then there is a geodesic $\ct$ in $\Eebt$
	separating one of the three points, say $x$, from the others. Then the
	corresponding point $x_\ct$ in $X_\M$ lies on each geodesic from $\ov{x}$ to
	$\ov{y}$ and on each geodesic from $\ov{x}$ to
	$\ov{z}$ in $X_\M$. Hence either the geodesic triangle is not simple or $x_\ct=\ov x$. In the second case we can change representative, and obtain a geodesic triangle in $\Sigmabt$ with shorter lengths (for the CAT(0) distance). Since the elements in $\Eebt$ are uniformly separated, the process terminates and shows that we can find preimages in the same piece.
We conclude proving (\ref{it-LengthsCorrespond}).
  We have that
  $L_{d_{\M,i}}(\gamma)=\inf_{x\in \Sigmabt}d_{\M,i}(x,\gamma.x)$.
  Let $x\in\Sigmabt$.
  Let $c$ be  the geodesic segment from $x$ to $\gamma.x$
  in $\Sigmabt$ for the CAT(0) metric $\metb$.
  It crosses the decomposing geodesics $\Eebt$ in  a
  straight chain $(x_0=x,x_1,\ldots ,x_{k+1}=\gamma.x)$,
  hence
  $$d_{\M,i}(x,\gamma.x)=\sum_{j=0}^{k}
  d_{\M,i}^{\Pt_j}(x_j,x_{j+1})\; .$$
  (see \S~\ref{ss-treegrad}).
  Let $J_F$ be the set of $j\in J=\{0,\ldots,k\}$ such that $\Pt_j$ is a
  flat piece
  and $J_L$ be the set of $j\in J$ such that $\Pt_j$ is a
  lamination piece.
  Recall that  the measured geodesic
  lamination on $(\Sigmab,\metb)$  corresponding
  to $\mLam_i$ is  $\mLamb_i=(\Lamb,\nub_i)$.
  % which is supported in the lamination pieces.
  Denote by $\mLamb_{\K,i}$ the measured geodesic
  lamination on $(\Sigmab,\metb)$ induced by
  $\mLam_{\K,i}$ (which is included in the flat pieces).
  
  As the geodesic lamination $\mLamb_i$ is supported on lamination pieces,
  we have
  \begin{align*}
    \sum_{j\in J_L}d_{\M,i}^{\Pt_j}(x_j,x_{j+1})
    &= \sum_{j\in J_L}d_{\mLamb_i}(x_j,x_{j+1})\\
    &=\sum_{j\in J}d_{\mLamb_i}(x_j,x_{j+1})\\
    &=d_{\mLamb_i}(x,\gamma.x)\\
    &\geq i(\mLamb_i,\gamma) \;.
  \end{align*}
  Similarly, since for any $x,y$ in (possibly different components of) the {boundary} $\partial\Pt$ of a flat piece
  $\Pt$ it holds
  $d_{\M,i}^{\Pt}(x,y)=d_{\mLam_{\K,i}}(x,y)$, and  the geodesic lamination $\mLamb_{\K,i}$ is supported on flat
  pieces, 
  we have
  \begin{align*}
    \sum_{j\in J_F}d_{\M,i}^{\Pt_j}(x_j,x_{j+1})
    % &=\sum_{j\in J_F}d_{\K,i}(p_\K(x_j),p_\K(x_{j+1}))\\
      &=\sum_{j\in J}d_{\mLamb_{\K,i}}(x_j,x_{j+1})\\
      &=d_{\mLamb_{\K,i}}(x,\gamma.x)\\
      &\geq i(\mLamb_{\K,i},\gamma).
  \end{align*}
  Hence:
  \begin{align*}
    d_{\M,i}(x,\gamma.x)
    % = \sum_{j\in J_F}d_{\M,i}^{\Pt_j}(x_j,x_{j+1}) +\sum_{j\in
    % J_L}d_{\M,i}^{\Pt_j}(x_j,x_{j+1})
    &\geq  i(\mLamb_{\K,i},\gamma)+i(\mLamb_i,\gamma)\\
    &=i(\mLam_{\K,i},\gamma)+i(\mLam_i,\gamma)\\
    &= i(\mLam_{\M,i},\gamma).
  \end{align*}
  If $x$ is on  an axis of $\gamma$ in the CAT(0)
  surface $(\Sigmabt,\metb)$, then there is equality.
  % (since $d_\mLamb(x,\gamma.x)=i(\mLamb,\gamma)$ for all measured
  % geodesic laminations $\mLamb$ on $\Sigmabt$).
\end{proof}

\section{Embeddings in products of trees}
\label{s-EmbeddingInProductOfTrees}

%%% Products of metric spaces
If $X$ is  a product $X=X_1\times \cdots X_\n$ of metric spaces
$(X_i,d_i)$, the {\em $i^{th}$-factor pseudometric} is the pseudometric
$d_i(x,y)=d_i(x_i,y_i)$
obtained by pulling back the metric on $X_i$ via the canonical
projection. The {\em $\ell^1$-metric} on $X$ is the metric given by
$d=\sum_i d_i$.

%%% Embedding  R^2-tree-graded space in product of trees
\begin{Proposition} 
  \label{prop-EmbeddingInProductOfTrees}
  Consider a $\R^2$-mixed structure $\M$ on $\Sigma$
  with $\R^2$-length function $\vL$, and
  a pair of isometric actions of $\Gamma$ on  
  $\R$-trees  $T_1,T_2$ with  length functions
  $(L_{T_1},L_{T_2})=\vL$.
  Let $(X_\M,(d_1,d_2))$ be the tree-graded $\R^2$-space associated with $\M$.
  There is 
  an equivariant embedding 
  $$f\colon X_\M \mapsto T_1\times T_2$$ 
  preserving each factor pseudometric $d_i$.
  % $$d_i(f(x),f(y))=d_i(x,y)$$
  In particular, $f$ is isometric for the $\ell^1$-metric $d=d_1+d_2$.
\end{Proposition}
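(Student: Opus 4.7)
The approach is to construct the embedding factor by factor. For each $i\in\{1,2\}$ I would build an equivariant map $f_i\colon X_\M\to T_i$ satisfying $d_{T_i}(f_i(x),f_i(y))=d_{\M,i}(x,y)$ for all $x,y\in X_\M$, and then set $f:=(f_1,f_2)$. Once the $f_i$ are available, $f$ is equivariant and preserves each factor pseudometric by construction; its injectivity follows from the observation that $d_\M=d_{\M,1}+d_{\M,2}$ is a genuine metric on $X_\M$, so if $f(x)=f(y)$ then $d_{\M,i}(x,y)=0$ for both $i$, whence $d_\M(x,y)=0$ and thus $x=y$. The assertion that $f$ is isometric for the $\ell^1$-metric is then immediate, since the $\ell^1$-metric on $T_1\times T_2$ decomposes as the sum of the two factor pseudometrics.

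The central step is to identify the pseudometric $d_{\M,i}$ on $\Sigmabt$ with the pseudometric associated to the measured geodesic lamination $\mLam_{\M,i}=\mLam_{\K,i}+\mLam_i$ on $\Sigma$. On each lamination piece of $\Sigmabt$ this holds by definition, since $d^{\Pt}_{\M,i}$ is the restriction of $d_{\mLamb_i}$. On each flat piece $\Pt$ we have $d^{\Pt}_{\M,i}=d_{\K,i}=d_{\Fol_i}$, which by the classical equivalence recalled in \S\ref{ss-ml} has the same length spectrum as the dual-tree pseudometric of the geodesic representative $\mLam_{\K,i}$ of the foliation $\Fol_i$. Gluing these two pseudometrics along straight chains across $\Eebt$ (as in \S\ref{ss-treegrad}) should then recover the pseudometric on $\Sigmabt$ coming from $\mLam_{\M,i}$, and the resulting quotient is the $\R$-tree $T(\mLam_{\M,i})$, on which $\Gamma$ acts minimally with length function $I_{\mLam_{\M,i}}=L_{\M,i}$.

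Granting this identification, the proof concludes as follows. By hypothesis $T_i$ has length function $L_{T_i}=L_{\M,i}$, a non-zero element of $\ML$, so the $\Gamma$-action on $T_i$ is irreducible, and length rigidity for minimal irreducible $\Gamma$-trees (Theorem 3.2 of Culler--Morgan, recalled in \S\ref{ss-dt}) provides a unique equivariant isometry from $T(\mLam_{\M,i})$ onto the minimal $\Gamma$-invariant subtree of $T_i$. Composing with the quotient map $\Sigmabt\to T(\mLam_{\M,i})$ yields an equivariant map $\Sigmabt\to T_i$ that realizes the pseudometric $d_{\M,i}$, and since $d_\M\geq d_{\M,i}$ this map descends through $\piXM\colon\Sigmabt\to X_\M$ to the desired $f_i$. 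The step I expect to be the main obstacle is precisely the identification in the preceding paragraph: one must verify that the infima over straight chains defining $d_{\M,i}$ agree with the infima over arbitrary transverse paths entering the direct definition of the pseudometric of $\mLam_{\M,i}$, especially at interfaces in $\Eebt$ between flat pieces and lamination pieces, and that replacing each foliation $\Fol_i$ on a flat piece by its geodesic representative $\mLam_{\K,i}$ is compatible with this gluing.
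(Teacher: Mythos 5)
Your overall architecture coincides with the paper's: reduce to the dual trees $T(\mLam_{\M,i})$ via Culler--Morgan length rigidity, obtain $f_i$ by composing a quotient projection with an equivariant isometry onto the minimal invariant subtree $T_i'\subset T_i$, and get injectivity and the $\ell^1$-statement from the fact that $d_\M=d_{\M,1}+d_{\M,2}$ separates points. However, the step you single out as the main obstacle is genuinely where the argument, as you state it, breaks. You propose to identify the glued pseudometric $d_{\M,i}$ on $\Sigmabt$ with the pseudometric $d_{\mLam_{\M,i}}$ of the measured geodesic lamination $\mLam_{\M,i}$. On a flat piece $\Pt$ the pseudometric $d^{\Pt}_{\M,i}$ is the pullback of the foliation pseudometric $d_{\Fol_i}$ of the flat structure $\K$, while $d_{\mLam_{\K,i}}$ is defined from the geodesic representative in the hyperbolic picture. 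These two pseudometrics have the same intersection data but are \emph{not} equal as functions on $\Pt\times\Pt$: their zero-sets (leaves of $\Fol_i$ versus leaves and complementary regions of $\mLam_{\K,i}$) do not correspond pointwise; only the Hausdorff quotients, i.e.\ the dual trees, are equivariantly isometric --- and that itself is an instance of length rigidity, not a pointwise identity. (There is a further mismatch: the blow-up $\Sigmab$ used to build $X_\M$ resolves only the atoms of $\Lam$, not the closed leaves of $\mLam_{\K,i}$ coming from cylinders of $\K$.) So ``same length spectrum'' cannot be upgraded to ``same pseudometric on $\Sigmabt$'', and the gluing comparison cannot be run at that level.

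The repair is to work one level down, with quotients, which is what the paper does. Set $T_{\M,i}:=X_\M/d_{\M,i}$, the largest Hausdorff quotient, with projection $p_i$. Since $X_\M$ is tree-graded and each piece $X_\Pt$ projects in $T_{\M,i}$ onto an $\R$-tree (the dual tree of $\Fol_i$ for a flat piece, a subtree of $T(\vmLam)$ for a lamination piece), $T_{\M,i}$ is itself an $\R$-tree; the $\Gamma$-action on it is minimal because the action on $\Sigmabt$ is; and its length function equals $i(\mLam_{\M,i},\cdot)$ --- this is part (4) of the Proposition in \S\ref{ss-proofXM}, and it is proved precisely by the straight-chain bookkeeping you anticipate, but carried out only for length functions rather than for pseudometrics. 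Culler--Morgan rigidity then yields equivariant isometries $T_{\M,i}\isomto T(\mLam_{\M,i})\isomto T_i'\subset T_i$, and your assembly of $f=(f_1,f_2)$ concludes as you describe.
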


\begin{Example}
	If $\M$ is a flat surface, equivalently	$\subS=\Sigma$, the associated tree-graded $\R^2$-space is the flat surface given by the universal cover $\wt\M$ of $\M$. The map $f$ gives an embedding of $\wt\M$ 
	in any product of trees with the correct length function. Such embedding is isometric for the
	$\ell^1$-metric, and bilipschitz for the CAT(0) metric
	(compare \cite[Exemple 4, \S 2.3]{Gui04}). Such embedding is never isometric for the CAT(0) metric.
	% Culler-Levitt-Shalen \cite{CLS} ?

	%
    If, instead, $\M$ is a $2$-lamination, namely in the cases in which 
	$\subS=\emptyset$, $(X_\M,d)$ is a tree. The map $f$ gives an equivariant embedding of this tree  in any product of trees that induce the correct length function. This map is  is isometric if the product of trees is endowed with the
	$\ell^1$-metric.  Observe, however, that there are laminations $\Lambda$
        that support mutually singular transverse measures. For these
        the image of the $\R^2$-tree  won't be a geodesic subset of $T_1\times T_2$ if $T_1\times T_2$ is endowed with the CAT(0) metric.
          In general, when $T_1\times T_2$ is endowed with the CAT(0) metric, it is possible to show that the embedding is isometric if and only if all the laminations are homothetic.
\end{Example}

\begin{proof}[Proof of Proposition~\ref{prop-EmbeddingInProductOfTrees}]
We first use rigidity of lengths in $\R$-trees to reduce to the case
where each
$T_i$ is the  tree $T(\mLam_{\M,i})$ dual to the the measured geodesic
lamination $\mLam_{\M,i}$
on $\Sigma$ associated with $\M$:
Let $T'_i$ denote the minimal subtree of $T_i$ invariant by
$\Gamma$. 
The actions of $\Gamma$ on the trees $T'_i$ and 
$T(\mLam_{\M,i})$
are minimal and have same length function, which belongs to $\ML$.
Hence there is an equivariant isometry 
$h_i\colon T(\mLam_{\M,i})\isomto T'_i\subset T_i$.
Then the diagonal map $h=(h_i)_i$ is an equivariant embedding 
$\prod_iT(\mLam_{\M,i}) \to \prod_i T_i$ preserving each factor pseudometric $d_i$.

We now construct the canonical map $p_i\colon X_\M\to  T(\mLam_{\M,i})$.
We denote $T_{\M,i}\colon X_\M/d_{\M,i}$ the biggest Hausdorff quotient of $X_\M$
with respect to the pseudometric $d_{\M,i}$ and by
$p_i\colon X_\M \to T_{\M,i}$ the corresponding projection.
The tree $T_{\M,i}$ can be identified with $T(\mLam_{\M,i})$:  
since $T_{\M,i}$ is tree-graded with $\R$-trees pieces
$p_i(X_\Pt)=\Pt/d_{\M,i}$, it is itself an $\R$-tree.
%
%It is easily seen that the action of $\Gamma$ on $T_{\M,i}$ is minimal,
%
Since the action of $\Gamma$ on $\wt \Sigmab$ is
  minimal,
  \compl{In the sense there is no invariant closed convex subset}%
  the action on $T_{\M,i}$ is minimal as well.
The length function of this action is 
$$L_{X_\M,i}=L_{d_{\M,i}}=i(\mLam_{\M,i},\cdot)$$
In particular, 
by rigidity of length functions for actions on minimal
trees, there is an equivariant isometry
$T_{\M,i} \isomto T(\mLam_{\M,i})$.
Then the diagonal map $p=(p_i)_i$ from $X_\M$ to $\prod_i T(\mLam_{\M,i})$ 
sends each pseudometric $d_{\M,i}$ on the factor pseudometric $d_i$,
 hence  it takes 
the metric $d_\M=\sum_i d_{\M,i}$ to the $\ell^1$-metric $d=\sum_i d_i$ on $\prod_i T(\mLam_{\M,i})$.
The results follows 
taking the map $f=h\circ p$.
\end{proof}

%\bibliographystyle{alpha}
%\bibliography{SL2xSL2.bib}

\end{document}